\documentclass[10pt]{amsart}


\usepackage[utf8]{inputenc}
\usepackage[T1]{fontenc}
\usepackage[UKenglish]{babel}


\usepackage{amsmath}
\usepackage{amsfonts}
\usepackage{amssymb}
\usepackage{amsthm}
\usepackage{fullpage}
\usepackage{enumerate}
\usepackage{hyperref}
\usepackage{cancel}

\usepackage{geometry}
\usepackage{float}
\usepackage{tikz}
\usetikzlibrary{arrows,matrix,patterns,decorations.markings}
\usepackage{rotating}
\usepackage{lscape}
\usepackage[all,knot]{xy}


\usepackage{mathrsfs}
\usepackage{wasysym}
\usepackage{enumerate}


\theoremstyle{plain}
	\newtheorem{theorem}{\noindent\textbf{Theorem}}
	\newtheorem*{theorem*}{\noindent\textbf{Theorem}}
	\newtheorem{lemma}[theorem]{\noindent\textbf{Lemma}}
	\newtheorem{proposition}[theorem]{\noindent\textbf{Proposition}}
	\newtheorem{corollary}[theorem]{\noindent\textbf{Corollary}}

\theoremstyle{definition}
	\newtheorem{definition*}{\noindent\textbf{Definition}}

\theoremstyle{remark}
	\newtheorem{rem*}{\noindent\textbf{Remark}}

\theoremstyle{definition}
	\newtheorem{definition}{\noindent\textbf{Definition}}
	\newtheorem{example}{\noindent\textbf{Example}}
	\newtheorem{question}{\noindent\textbf{Question}}

\newcommand{\ocross}[2]{\begin{scope}[shift={(#1+2,#2+2)}]
\draw (+2,-2) -- (-2,+2);
\pgfsetlinewidth{8*\pgflinewidth}
\draw[white] (-2,-2) -- (+2,+2);
\pgfsetlinewidth{.125*\pgflinewidth}
\draw (-2,-2) -- (+2,+2);
\end{scope}}

\newcommand{\icross}[2]{\begin{scope}[shift={(#1+2,#2+2)}]
\draw (-2,-2) -- (+2,+2);
\pgfsetlinewidth{8*\pgflinewidth}
\draw[white] (+2,-2) -- (-2,+2);
\pgfsetlinewidth{.125*\pgflinewidth}
\draw (+2,-2) -- (-2,+2);
\end{scope}}

\newcommand{\ocrosstextp}{\raisebox{-0.1cm}{\begin{tikzpicture}[scale=.07]
\draw[<-,thick] (+2,-2) -- (-2,+2);
\pgfsetlinewidth{8*\pgflinewidth}
\draw[white] (-2,-2) -- (+2,+2);
\pgfsetlinewidth{.125*\pgflinewidth}
\draw[->, thick] (-2,-2) -- (+2,+2);
\end{tikzpicture}}\:}

\newcommand{\ocrosstextn}{\raisebox{-0.1cm}{\begin{tikzpicture}[scale=.07]
\draw[->,thick](-2,-2) -- (+2,+2);
\pgfsetlinewidth{8*\pgflinewidth}
\draw[white] (+2,-2) -- (-2,+2);
\pgfsetlinewidth{.125*\pgflinewidth}
\draw[<-,thick]  (+2,-2) -- (-2,+2);
\end{tikzpicture}}\:}

\newcommand{\orisplittext}{\raisebox{-0.1cm}{\begin{tikzpicture}[scale=.07]
\begin{scope}[shift={(2,2)}]
\draw (-2,-2) .. controls +(1,1) and +(-1,1) ..  (2,-2);
\draw (-2,+2) .. controls +(1,-1) and +(-1,-1) ..  (2,+2);
\end{scope}
\draw[dashed] (2,2) circle (2.8cm);
\end{tikzpicture}}\:}

\title{Transverse link invariants from the deformations of Khovanov $\mathfrak{sl}_{3}$-homology}

\author{Carlo Collari}
    
\begin{document}
\begin{abstract}In this paper we will make use of the Mackaay-Vaz approach to the universal $\mathfrak{sl}_3$-homology to define a family of cycles (called $\beta_3$-invariants) which are transverse braid invariants. This family includes Wu's $\psi_{3}$-invariant. Furthermore, we analyse the vanishing of the homology classes of the $\beta_3$-invariants and relate it to the vanishing of Plamenevskaya's and Wu's invariants. Finally, we use the  $\beta_3$-invariants to produce some Bennequin-type inequalities. 
\end{abstract}
\maketitle
\section{Introduction}
\subsection*{Background material and motivations}
Let $(x,y,z)$ be a system of coordinates in $\mathbb{R}^3$. The \emph{symmetric contact structure} on $\mathbb{R}^{3}$ is the plane distribution $\xi_{sym}=ker(dz + xdy - ydx)$. 
A \emph{link} is a smooth embedding of a number of copies of $\mathbb{S}^1$ into $\mathbb{R}^3$, and a \emph{knot} is a link with a single component. The presence of $\xi_{sym}$ allows one to distinguish a special class of links (and knots): those which are nowhere tangent to $\xi_{sym}$ (\emph{transverse links}). 

Two transverse links are \emph{equivalent} (or the same \emph{transverse type}) if they are ambient isotopic through a one-parameter family of transverse links. In particular, equivalent transverse links represent the same link-type (i.e. the ambient isotopy class of a link).

The study of transverse links has played (and still plays) a key role in the study of low-dimensional topology. The aim of the present paper is to present new invariants for transverse links arising from the deformations of Khovanov $\mathfrak{sl}_3$-homology.
Each transverse link inherits a natural orientation from $\xi_{sym}$ (cf. \cite{Etnyre05}). All transverse invariants defined in this paper are defined with respect to this orientation.

In order to define our invariants we need a particular representation of transverse links. It is a result due to D. Bennequin (\cite{Bennequin83}) that each transverse link is equivalent to a closed braid. Another result, due to S. Orevkov and V. Shevchishin (\cite{OrevkovShev03}) and independently N. Wrinkle (\cite{Wrinkle03}), provides a complete set of combinatorial moves relating all braids whose closure represent the same transverse type. We summarise these results in the following theorem, which shall be referred to as the \emph{transverse Markov theorem} in the rest of the paper.

\begin{theorem}[Bennequin \cite{Bennequin83}, Orevkov and Shevchishin \cite{OrevkovShev03}, Wrinkle  \cite{Wrinkle03}]
Any transverse link is transversely isotopic to the closure of a braid (with axis the $z$-axis).
Moreover, two braids represent the same transverse type if and only if they are related by a finite sequence of braid relations, conjugations, positive stabilisations, and positive destabilisations\footnote{Let $B\in B_{m-1}$, the \emph{positive} (resp. \emph{negative}) \emph{stabilisation} of $B\in B_{m-1}$ is the braid $B\sigma_{m}\in B_{m}$ (resp. $B\sigma_{m}^{-1}\in B_{m}$). The destabilisation is just the inverse process: if one considers a braid of the form $A\sigma_{m} B$ (resp. $A\sigma_{m}^{- 1} B$), where $A,\ B\in B_{m-1}$, then its positive (resp. negative) destabilisation is the braid $AB$.}. These moves are called \emph{transverse Markov moves}.
\end{theorem}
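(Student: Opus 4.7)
The plan is to split the argument into the two historically separate pieces: (i) Bennequin's existence statement that every transverse link is transversely isotopic to a closed braid about the $z$-axis, and (ii) the Orevkov--Shevchishin--Wrinkle uniqueness statement on the list of allowed moves.

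For part (i), I would work in cylindrical coordinates $(r,\theta,z)$, in which the symmetric contact form becomes $\alpha = dz + r^{2}\,d\theta$, so that the $z$-axis is the binding of an open book whose pages are the half--planes $\theta=\mathrm{const}$. A link $L$ is transverse precisely when $\alpha|_{L}>0$ with the induced orientation, i.e.\ $\dot z + r^{2}\dot\theta>0$. I would then imitate Alexander's proof: cover $L$ by finitely many short arcs, flag the ones on which $\dot\theta\le 0$, and replace each bad arc by a new arc obtained by pushing it once around the binding through the complementary region. Because $\alpha$ is invariant under rotation about the $z$-axis, such a local push can be realised by a compactly supported transverse isotopy; after finitely many pushes one obtains a transverse representative with $\dot\theta>0$ everywhere, which is exactly a closed braid.

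For part (ii), given two braids $B_{0},B_{1}$ whose closures are connected by a transverse isotopy $H$, the strategy is to perturb $H$ so that, except at finitely many critical times $t_{1}<\dots<t_{k}$, the intermediate link $H(\cdot,t)$ is a closed braid. At the non-critical times the combinatorics of the braid do not change; as $t$ crosses a critical value either (a) the cyclic order of the strands over the binding changes, yielding a braid relation or a conjugation, or (b) an arc of the closure becomes momentarily tangent to, or crosses, the binding. A local model near a tangency then shows that the braid word changes by either a positive stabilisation or a positive destabilisation. The crux is to show that negative stabilisations are impossible: this is where the contact-geometric content enters, since topologically both signs would be allowed by classical Markov.

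Ruling out negative stabilisation is exactly the main obstacle. The argument of Orevkov--Shevchishin achieves it by studying $J$-holomorphic discs whose boundaries track the moving braid and using positivity of intersections to pin down the sign of any added loop, while Wrinkle achieves it through a careful analysis of the braid foliation on a spanning membrane, showing that a singularity producing a negative stabilisation would force a non-transverse moment in the isotopy. Either route is substantial; once it is in place, however, packaging the finitely many critical moments into the stated list of transverse Markov moves is a bookkeeping exercise, and together with part (i) it yields the theorem.
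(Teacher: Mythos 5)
The paper does not prove this theorem: it is quoted verbatim from the literature, with the proof delegated entirely to the cited works of Bennequin, Orevkov--Shevchishin, and Wrinkle. So there is no internal argument to compare yours against; what can be assessed is whether your outline would stand on its own, and as written it does not. Your sketch correctly reproduces the architecture of the known proofs --- an Alexander-type winding argument for existence, and a genericity analysis of the isotopy for the Markov-type statement --- but the two load-bearing steps are asserted rather than established. In part (i), the claim that a ``bad'' arc can be pushed around the binding ``by a compactly supported transverse isotopy'' is exactly the point where Bennequin has to work: one must exhibit the push so that $\dot z + r^{2}\dot\theta>0$ is preserved at every intermediate time, not merely at the endpoints, and this is not automatic from rotational invariance of $\alpha$ alone. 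In part (ii), you explicitly name the exclusion of negative stabilisations as ``the crux'' and then outsource it to $J$-holomorphic discs or braid foliations without carrying out either argument; since that exclusion is the entire content distinguishing the transverse Markov theorem from the classical one, the proposal at that point stops being a proof and becomes a citation --- which is precisely what the paper itself does, only more briefly.

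Two smaller omissions: you do not address the easy (``only if'') direction at all, namely that braid relations, conjugation, and positive (de)stabilisation of a braid can each be realised by a transverse isotopy of the closures --- this is routine but is part of the statement; and in part (ii) your case analysis of critical times should also justify why a generic transverse isotopy admits only the listed degeneracies (tangency to a page versus interaction with the binding) and no others. If your intent was to give a roadmap of where the difficulty lives rather than a self-contained proof, the roadmap is accurate; but it does not discharge the theorem.
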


\begin{rem*}
Braids are naturally oriented, and their orientation coincides with the orientation of the corresponding transverse link. 
\end{rem*}

\begin{rem*}
By adding the negative stabilisation and destabilisation to the set of transverse Markov moves one recovers the full set of \emph{Markov moves}.
\end{rem*}

Any sequence of Markov moves between braids, naturally translates into a sequence of oriented Reidemeister moves between their closures. In particular, conjugation in the braid group can be seen as a sequence of Reidemeister moves of the second type followed by a planar isotopy, while the braid relations can be seen as either second or third Reidemeister moves. We remark that not all the oriented versions of second and third Reidemeister moves arise in this way; those which can be obtained as composition of Markov moves and braid relations are called \emph{braid-like} or \emph{coherent}. Finally, a positive (resp. negative) stabilisation translates into a positive (resp. negative) first Reidemeister move, as shown in Figure \ref{fig:stabilisationasr1}. 

\begin{figure}[]
\centering
\begin{tikzpicture}[scale =.7]

\begin{scope}[shift = {+(-8,0)}]
\draw (0,2) circle (1.2);
\draw (0,2) circle (1);
\draw (0,2) circle (2);
\draw (0,2) circle (2.1);
\draw[fill, white] (-.5,-.25) rectangle (0.5,1.25);
\draw (-.5,-.25) rectangle (0.5,1.25);
\node (a) at (0,.5) {$B$};
\end{scope}

\node at (-4,2.75) {Stabilisation};
\node at (-4,2) {\huge{$\rightleftarrows$}};
\node at (-4,1.4) {Destabilisation};
\draw (0,2) circle (1.2);
\draw (0,2) circle (2);
\draw (0,2) circle (2.1);
\draw[fill, white] (-.5,-.25) rectangle (0.5,1.25);
\draw (-.5,-.25) rectangle (0.5,1.25);
\node (a) at (0,.5) {$B$};

\draw (0.5,1.2) .. controls +(.75,.5) and +(.1,-.1) .. (.5,2.5);
\draw  (.5,2.5) .. controls +(-.5,.5) and +(.1,.2) .. (-0.6,2.44);
\pgfsetlinewidth{10*\pgflinewidth}
\draw[white]  (.92,2) .. controls +(0,-1) and +(-.5,-1) .. (-0.6,2.44);
\pgfsetlinewidth{.1*\pgflinewidth}
\draw  (.92,2) .. controls +(0,-1) and +(-.5,-1) .. (-0.6,2.44);

\draw  (.92,2) .. controls +(0,1.25) and +(0,1.25) .. (-.92,2) .. controls +(0,-.25) and +(-.25,.25) .. (-.5,1.2);

\end{tikzpicture}
\caption{Stabilisation and destabilisation as Reidemeister moves between closures.}\label{fig:stabilisationasr1}
\end{figure}
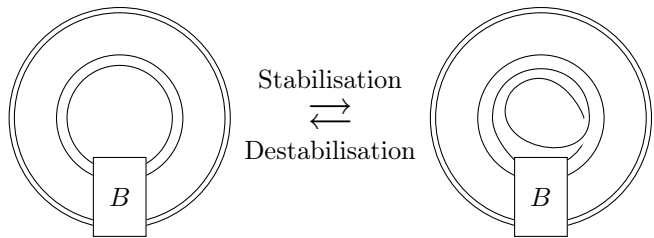

Transverse links have two \emph{classical invariants}: the link-type and the \emph{self-linking number}\footnote{Technically speaking, for links one may record the self-linking number of each components and this defines a slightly stronger transverse invariant, called the self-linking matrix. However, for consticency with the literature in the subject (cf. \cite{TransFromKhType17, Nglipsar13, Plamenevskaya06, Wu08}) in this paper we shall consider only the self-linking number.} $sl$. The latter is defined as follows. Let $B$ be a braid, and set
\[ sl(B) = w(B) - b(B), \]
where $w$ denotes the \emph{writhe}, and $b$ denotes the \emph{braid index} (i.e. the number of strands).
In the light of the transverse Markov theorem, it is immediate that the self-linking number is a transverse invariant. Moreover, from the definition of $sl$ follows easily that a negative stabilisation (resp. destabilisation) does not preserve the equivalence class of the transverse link. Any invariant capable of distinguishing distinct transverse links with the same classical invariants is called \emph{effective}. It is a major problem in the study of transverse links to find effective invariants.

Making use of the representation of transverse links as closed braids, O. Plamenevskaya introduced a transverse invariant $\psi$. This invariant is an homology class in Khovanov's categorification of the Jones polynomial (see \cite{Plamenevskaya06}). Since Plamenevskaya's ground breaking work, other invariants for transverse links coming from quantum (and also Floer theoretic) link homologies have been introduced. In 2008, H. Wu (see \cite{Wu08}) generalised $\psi$ to a family of invariants $\psi_{N}$ which are homology classes in M. Khovanov and L. Rozansky's categorification of the Reshetikhin-Turaev $\mathfrak{sl}_{N}$-invariants.
Khovanov-Rozansky homologies (and also Khovanov homology, being the case $N=2$) admit deformations; these ``deformed theories'' are parametrised by a monic polynomial of degree $N$ called the potential.
In 2015, R. Lipshitz, L. Ng, and S. Sarkar (see \cite{Nglipsar13}) extended the definition of the Plamenevskaya invariant to two chains $\psi^\pm$ belonging to the chain complex of (a twisted version of) Lee's deformation (i.e. the theory associated to the potential $x^{2} - x$). The author in \cite{TransFromKhType17} extended the Plamenevskaya invariant to all deformations of Khovanov homology. It is still unknown at the time of writing whether or not all these invariants are effective.

\subsection*{Outline and statement of results}

The aim of this paper is to extend the definition of Wu's $\psi_3$ invariant to all deformations of Khovanov\footnote{The $\mathfrak{sl}_3$-homology was defined first by Khovanov, then Khovanov and Rozansky extended the definition to all $\mathfrak{sl}_N$, for $N\geq 2$, with a different technique.} $\mathfrak{sl}_3$-homology. As we stated above, these deformations are parametrised by a monic polynomial $\omega\in R[x]$ of degree $3$ (the \emph{potential}), where R is the coefficient ring of the homology theory.
Furthermore, the resulting homology theory is either graded or filtered depending on $\omega$. We make use of the construction of the universal $\mathfrak{sl}_3$-homology due to M. Mackaay and P. Vaz (\cite{Mackaayvaz07}) which encodes all the deformations of Khovanov $\mathfrak{sl}_3$-homology (\cite{Mackaayvaz07a}). We tried to keep the paper as self-contained as possible, so we shall review Mackaay and Vaz's construction in Sections 2 and 3.

In Section 4 we define for each diagram $D$, potential $\omega$, and each root $x_1$ of $\omega$ in $R$, a chain $\beta_{\omega,\: x_1}(D)$, which turns out to be a cycle (Proposition \ref{proposition:betaiscycle}).
If $D= \widehat{B}$ is the closure of a braid $B$, we shall denote $\beta_{\omega,\: x_1}(\widehat{B})$ simply by $\beta_{\omega,\: x_1}(B)$.
Our main theorem is the following.

\begin{theorem}\label{theorem:beta_3}
Let $B$ be a braid.
For each root $x_1$ of $\omega (x)$, the cycle $\beta_{\omega,\: x_1}(\overline{B})$ is a transverse invariant, where the over-line indicates the mirror braid. More precisely, if $B$ and $B^\prime$ are related by a sequence of transverse Markov moves, and denoted by $\Phi$ the map associated to the mirror sequence of these moves, then
\[ \Phi(\beta_{\omega,\: x_1}(\overline{B}))=\beta_{\omega,\: x_1}(\overline{B^\prime}).\]
Moreover, if the homology theory is filtered (resp. graded), then the filtered degree\footnote{That is the index corresponding to the smallest sub-complex in the filtration containing the chain $\beta_{\omega,\: x_1}(\overline{B})$.} (resp. the degree) of $\beta_{\omega,\: x_1}(\overline{B})$ is $-2sl(B)$.
\end{theorem}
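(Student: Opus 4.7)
The plan is to split the invariance claim into the three families of transverse Markov moves --- braid relations, conjugations, and positive (de)stabilisations --- and to compute the (filtered or $q$-)degree directly at the vertex of the cube of resolutions supporting $\beta_{\omega,\: x_1}(\overline{B})$. The starting observation is that $\beta_{\omega,\: x_1}(\overline{B})$ is concentrated at a single vertex, the oriented resolution of $\widehat{\overline{B}}$, whose underlying trivalent graph is the Seifert picture: for the closure of a braid this is just $b(B)$ disjoint circles. By construction $\beta_{\omega,\: x_1}$ tensors, over these circles, a distinguished local generator depending on the root $x_1$ of $\omega$. Since it is already a cycle (Proposition \ref{proposition:betaiscycle}), one only has to track the image of this single vertex under the local chain map of each move.

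For conjugations and braid relations, the associated planar pictures are compositions of \emph{braid-like} (coherent) Reidemeister II and III moves, together with a planar isotopy. Each of these moves preserves the oriented resolution locally, and the induced chain map in the Mackaay--Vaz formalism is either a direct-summand projection (for R2) or a composition of saddle morphisms (for R3). One then has to check, case by case, that the distinguished local factor is preserved; this reduces to a small number of foam computations in the universal $\mathfrak{sl}_3$-foam category, specialised at $x = x_1$. This step is routine but tedious, since the oriented R2 and R3 moves come in several flavours and each braid-like one must be verified; it parallels the analogous arguments in \cite{Wu08} for $\psi_3$ and in \cite{TransFromKhType17} for the $\mathfrak{sl}_2$ case.

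The delicate step, and the genuine obstacle, is positive stabilisation. Via mirroring this becomes a \emph{negative} Reidemeister I move on $\widehat{\overline{B}}$, and one has to show that the induced chain map sends $\beta_{\omega,\: x_1}(\overline{B})$ to $\beta_{\omega,\: x_1}(\overline{B\sigma_{m}})$. At a negative kink the oriented resolution introduces a small extra Seifert circle; the Mackaay--Vaz R1 chain map is obtained by capping off the corresponding foam with a specific polynomial in $x$, and the point is that, after evaluating at a root $x_1$ of $\omega$, this cap acts as the identity on the extra $\beta$-factor. The ``conjugate'' computation for the \emph{positive} R1 move (corresponding to a negative stabilisation of $B$) should instead produce a nontrivial multiplication, and this asymmetry --- controlled precisely by the choice of a root of the potential --- is what forces us to work with the mirror braid and gives the invariance its transverse rather than topological character.

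Finally, the degree statement is a bookkeeping at the oriented vertex. The Seifert resolution of $\widehat{\overline{B}}$ has $b(B)$ circles, the height of the vertex in the cube equals the number of negative crossings of $\overline{B}$, that is $w^+(B)$, and the global grading shifts of the Mackaay--Vaz complex depend on the positive and negative writhes of $\overline{B}$, namely $w^-(B)$ and $w^+(B)$. Combining these shifts with the known internal $q$-degree of the distinguished local generator chosen on each of the $b(B)$ circles, a short computation produces $-2(w(B)-b(B)) = -2\, sl(B)$. In the graded case this is the $q$-degree; in the filtered case the identical computation yields the filtration index, since the filtration is induced by the $q$-grading of the leading term.
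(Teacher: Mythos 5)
Your proposal follows essentially the same route as the paper: reduce the transverse Markov moves to coherent Reidemeister II/III moves and the negative Reidemeister I move on the mirror closure, verify that the Mackaay--Vaz chain maps preserve the distinguished chain at the oriented web resolution (with the root $x_1$ entering exactly through the relation $xP(x)\equiv x_1P(x)\bmod\omega(x)$ in the R1 case), and then read off the degree from the top summand at that vertex. The only slip is a bookkeeping one: the height of the oriented resolution vertex is the number of \emph{positive} crossings of $\overline{B}$ (i.e.\ the number of negative crossings of $B$), not the number of negative crossings of $\overline{B}$, though your final value $-2\,sl(B)$ is correct.
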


\begin{rem*}
The filtered degree mentioned in Theorem \ref{theorem:beta_3} is the filtered degree of \emph{the chain} $\beta_{\omega,\: x_1}(\overline{B})$, and not the filtered degree of its homology class. The filtered degree of $[\beta_{\omega,\: x_1}(\overline{B})]$ is not necessarily a meaningful transverse invariant. For instance, if $\mathbb{F}$ is a field and $\omega\in \mathbb{F}[x]$ has three distinct roots, then the filtered degree of $[\beta_{\omega,\: x_1}(\overline{B})]$ is a concordance invariant (one of the $j_{i}$'s mentioned in Proposition \ref{prop:Bennequin-type}). The comparison between the two filtered degrees, under the above hypotheses, gives a weaker version of the first Bennequin-type inequality in Proposition \ref{prop:Bennequin-type}.
\end{rem*}

Motivated by the theorem above, the cycles $\beta_{\omega,\: x_1}(\overline{B})$ are collectively called $\beta_3$-invariants.
We wish to point out that the construction of the $\beta_3$-invariants seems to be generalizable to the $\mathfrak{sl}_{N}$ case (also with colours). This will be the subject of a forthcoming paper by the author joint with P. Wedrich.

Exploring the effectiveness of these invariants is quite a difficult problem. First, because distinguishing non-equivalent transverse knots with the same classical invariants is a subtle problem. This is also confirmed by the fact that there are only a few known families of transverse knots which are not distinguished by their classical invariants. Second, because of the nature of our invariants: these are chains in a diagram dependent chain complex up to the action of a certain group of chain homotopies. So we leave open the following question.

\begin{question}
Are the $\beta_3$-invariants effective?
\end{question}

In Section 5 we make use of the homology classes of the $\beta_3$-invariants to define two auxiliary invariants which are easier to analyse. These invariants are: the vanishing of the homology class, and the divisibility with respect to a non-unit element of the homology class.

In the case where the base ring is a field we can relate the vanishing of the homology class of the $\beta_3$-invariants to the vanishing of the Plamenevskaya invariant and Wu's $\psi_{3}$-invariant. Our second main result is the following.

\begin{proposition}\label{prop:vanishing}
Let $\omega$ be a potential over a field $\mathbb{F}$, and let $x_1$, $x_2$, $x_3\in \mathbb{F}$ be the roots of $\omega$. Denote by $H_{\omega}^\bullet$ the homology corresponding to the potential $\omega$. Then, given a braid $B$ we have the following:
\begin{itemize}
\item[(1)] if $x_1$ is a simple root of $\omega$, then $[\beta_{\omega, x_1}(\overline{B})]$ is non-trivial in $H_{\omega}^\bullet(\overline{B}, \mathbb{F})$;
\item[(2)] if $x_1$ is a double root of $\omega$, then $[\beta_{\omega, x_1}(\overline{B})]$ vanishes if and only if the Plamenevskaya invariant $\psi(B)$ vanishes in $Kh^{\bullet}(B, \mathbb{F})$;
\item[(3)] if $x_1$ is a triple root of $\omega$, then $[\beta_{\omega, x_1}(\overline{B})]$ vanishes if and only if Wu's invariant $\psi_{3}(B)$ vanishes in $H_{x^{3}}^{\bullet}(\overline{B}, \mathbb{F})$;
\end{itemize}
In particular, the vanishing of $[\beta_{\omega, x_1}(\overline{B})]$ does not depend on the potential $\omega$ and on the root $x_1$, but only on the multiplicity of $x_1$ as a root of $\omega$. (However, it may depend on the choice of the base field.)
\end{proposition}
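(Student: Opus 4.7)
The main idea is to reduce each of the three cases to the setting of a pure-power potential $(x-x_1)^{m}$, where $m$ is the multiplicity of $x_1$ as a root of $\omega$, and then to identify the resulting $\beta_3$-invariant with the standard transverse class appearing in the conclusion. The reduction rests on the decomposition theorem for deformed $\mathfrak{sl}_{3}$-homology over a field due to Mackaay and Vaz, which factors $H_\omega^{\bullet}$ according to the factorisation of $\omega$ into coprime pieces. Writing $\omega(x) = (x-x_1)^{m}q(x)$ with $q(x_1)\ne 0$, this produces summand projections, and the first task is to check that on the oriented resolution (which supports $\beta_{\omega,\: x_1}$) these projections send $[\beta_{\omega,\: x_1}(\overline{B})]$ to $[\beta_{(x-x_1)^{m},\: x_1}(\overline{B})]$ in the $(x-x_1)^m$-summand and to zero in the $q$-summand. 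Since $\beta_{\omega,\: x_1}$ is concentrated in a single resolution, this reduces to a local comparison of the decorations placed on the circles of that resolution by the two potentials.

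Granted the reduction, each case becomes a concrete identification in the pure-power setting. Case (3) is immediate: the translation $y = x-x_1$ sends $(x-x_1)^{3}$ to the undeformed potential $y^{3}$, and under the induced isomorphism of chain complexes $\beta_{(x-x_1)^{3},\: x_1}$ becomes by construction Wu's $\psi_3$. Case (1) is also short: when $m=1$ the associated homology has rank one per connected component of any resolved diagram, and the explicit formula of Section 4 realises $\beta_{(x-x_1),\: x_1}$ as a generator of this rank-one space, so its class cannot vanish. The substantive case is (2): following the strategy used by Gornik for the original Khovanov--Rozansky homology, I would show that the potential $(x-x_1)^{2}$ reduces the $\mathfrak{sl}_{3}$ matrix factorisation complex to (a grading shift of) the Khovanov $\mathfrak{sl}_2$-complex, and that under this reduction $\beta_{(x-x_1)^{2},\: x_1}$ corresponds to Plamenevskaya's $\psi$. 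The biconditional in (2) then follows from the fact that this reduction is a quasi-isomorphism onto the $x_1$-summand.

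The principal technical obstacle is upgrading the Mackaay--Vaz splitting, which is a priori a statement about homologies, to an explicit chain-level decomposition fine enough to track a particular cycle: the idempotents coming from the Chinese Remainder decomposition of $R[x]/(\omega)$ must be promoted to chain maps on the deformation complex and shown to act on the oriented resolution in the expected way. A secondary subtlety is that Plamenevskaya's invariant lives in $\mathfrak{sl}_{2}$-Khovanov homology while $\beta_{\omega,\: x_1}$ lives in $\mathfrak{sl}_{3}$-homology, so the identification in case (2) requires carefully bridging the two theories through the matrix-factorisation framework. Once these points are addressed, the final ``only if'' directions of (2) and (3) are immediate from the injectivity of the summand inclusions, and the potential-independence stated in the last sentence follows because both the reduction and the pure-power identification only see the multiplicity of $x_1$.
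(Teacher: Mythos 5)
Your proposal is correct and follows essentially the same route as the paper: both arguments run the cycle $\beta_{\omega,\,x_1}$ through the Mackaay--Vaz decomposition of $H_{\omega}^{\bullet}$ over a field (indexed in the paper by colourings of the components by roots, with the all-$x_1$ colouring playing the role of your $(x-x_1)^m$-summand), identify its image there with a unit multiple of the canonical generator, of Plamenevskaya's $\widetilde{\psi}$, or of $\psi_3$ according to the multiplicity, and handle the triple-root case by the translation $x \mapsto x - x_1$ realised as an endofunctor of $\mathbf{Foam}$. The chain-level obstacle you flag is exactly what the paper resolves by quoting the explicit canonical generators $\Sigma_{\phi}(\underline{w})$ and the chain map of \cite[Equation~(17)]{Mackaayvaz07}, together with the observation that $\beta_{\omega,\,x_1}$ lies entirely in the subcomplex of the all-$x_1$ colouring.
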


The previous proposition holds true only if the base ring is a field. This naturally leads to the following question.

\begin{question}
Assume the base ring $R$ to be a Noetherian domain. Is it true that the vanishing of $[\beta_{\omega, x_1}(\overline{B})]$ depends only on the multiplicity of the $x_1$ and on the vanishing of the $\psi$ and $\psi_3$-invariants?
\end{question}

The previous proposition allows us to relate the effectiveness of the vanishing of the homology classes of the $\beta_3$-invariants with the effectiveness of the vanishing of the $\psi$ and $\psi_3$-invariants. In particular, taking into account the results in \cite{TransFromKhType17, Nglipsar13}, we obtain the non-effectiveness of the vanishing of the homology classes of the $\beta_3$-invariants corresponding to double roots in the following cases: knots with crossing number $\leq 11$, flypes, and two bridge knots. Most of the examples of distinct transverse knots with the same classical invariants fall into these categories. However, the following question remains open.

\begin{question}
Is the vanishing of the homology classes of the $\beta_3$-invariants an effective invariant?
\end{question}

Let $R$ be an integral domain and $a\in R\setminus\{ 0\}$ a non-unit element. Given a potential $\omega\in R[x]$ and one of its roots $x_1\in R$, the number
\[c_{\omega, x_1}(B;a) = max\left\{ k\:\vert\: \exists\: [y]\in H^{0}_{\omega}(\overline{B})\:\text{such that}\: a^k[y] = [\beta_{\omega, x_1}(\overline{B})]\right\}\in \mathbb{N}\cup \{ \infty \}\]
is a well-defined transverse invariant, where $c_{\omega,x_1}(B;a)=\infty$ if and only if $[\beta_{\omega, x_1}(\overline{B})]$ is trivial or $a$-torsion. We analyse these invariants in the case $R = \mathbb{F}[U]$ and $\omega = (x - Ux_1) (x - Ux_2) (x - Ux_3)$, with $x_1$, $x_2$, $x_3 \in \mathbb{F}$ distinct, and $a = U$. From our analysis we obtain the following Bennequin-type inequalites.

\begin{proposition}\label{prop:Bennequin-type}
Let $B$ be a braid representing a knot $K$. Given $\omega$ as above, denote by $\omega_{1} = \omega_{\vert U = 1}$. Then the following inequalities hold
\begin{eqnarray}
2 \left[ sl(B) + c_{\omega,Ux_{i}}(B,U) \right] \leq j_{1}(K) \label{eq:inequality1}\\
2 \left[ sl(B) + c_{\omega,Ux_{i}}(B,U)\right] \leq s_{\omega_1}(K) \label{eq:inequality2} \\
sl(B) + c_{\omega,Ux_{i}}(B,U) - 1 \leq 3 \tilde{s}_{\omega_1,x_{i}}(K) \label{eq:inequality3}
\end{eqnarray}
for each $i\in \{ 1,2,3 \}$, where $j_{1}$ (\cite{Lobb12, Wu09}, see also \cite{LewarkLobb17}), $s_{\omega_1}(K)$, and $\tilde{s}_{\omega_1,x_{i}}(K)$ (\cite{LewarkLobb17}) are concordance invariants (see also Theorem \ref{thm:jis} and subsequent lines for a very quick overview).
\end{proposition}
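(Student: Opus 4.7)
My plan is to combine the filtered-degree formula of Theorem \ref{theorem:beta_3} with the extremal-filtered-degree definitions, due to Lobb-Wu and Lewark-Lobb, of the invariants $j_1$, $s_{\omega_1}$, and $\tilde{s}_{\omega_1, x_i}$. Fix the convention that $U$ has quantum degree $2$ and each $x_j$ has quantum degree $0$. Under this convention $\omega = (x - Ux_1)(x - Ux_2)(x - Ux_3)$ is homogeneous of quantum degree $6$, the complex $C^\bullet_\omega(\overline{B})$ is genuinely $\mathbb{Z}$-graded, and Theorem \ref{theorem:beta_3} realises $\beta_{\omega, Ux_i}(\overline{B})$ as a homogeneous cycle in quantum degree $-2sl(B)$.

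Setting $c = c_{\omega, Ux_i}(B, U)$, the key step is to use the $U$-divisibility of $[\beta_{\omega, Ux_i}(\overline{B})]$ to produce a lower-degree class. By definition of $c$ there exists a homogeneous class $[y] \in H^0_\omega(\overline{B})$ with $U^c [y] = [\beta_{\omega, Ux_i}(\overline{B})]$; since $U$ shifts quantum degree by $2$, the class $[y]$ is forced to sit in quantum degree $-2sl(B) - 2c$. Specialising $U \mapsto 1$ collapses the $\mathbb{Z}$-graded theory $H^\bullet_\omega$ to the $\mathbb{Z}$-filtered theory $H^\bullet_{\omega_1}$, and the image $[\bar y]$ therefore lies in $H^0_{\omega_1}(\overline{K})$ at filtered degree at least $-2sl(B) - 2c$. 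I would then verify that $[\bar y]$ is non-zero and falls into the Lewark-Lobb eigen-component corresponding to the root $x_i$: this identification is a chain-level consequence of the fact that, after $U = 1$, the Mackaay-Vaz cycle $\beta_{\omega, Ux_i}(\overline{B})$ projects (up to a non-zero scalar) to the canonical Gornik/Lewark-Lobb idempotent for $x_i$, in the same spirit as the arguments used for Proposition \ref{prop:vanishing}.

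Each of $j_1(K)$, $s_{\omega_1}(K)$, and $\tilde{s}_{\omega_1, x_i}(K)$ is, up to an affine rescaling, an extremal filtered degree of a canonical non-zero class in $H^0_{\omega_1}(\overline{K})$. Substituting the filtered-degree bound on $[\bar y]$ from the previous paragraph, and using the mirror relation $\nu(\overline{K}) = -\nu(K)$ for each such concordance invariant $\nu$, yields inequalities \eqref{eq:inequality1}--\eqref{eq:inequality3} after rearrangement. The constants $2$, $3$, and $-1$ are read off the affine normalisations built into the definitions of $j_1$, $s_{\omega_1}$, and $\tilde{s}_{\omega_1, x_i}$; in particular, the factor of $3$ in \eqref{eq:inequality3} reflects that $\tilde{s}_{\omega_1, x_i}(K)$ is defined through the filtered degree of a single eigengenerator rather than the full extremal grading, while the off-by-one $-1$ comes from the standard $(N-1)$-shift for the $\mathfrak{sl}_N$-canonical class with $N=3$. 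The main obstacle is the eigenspace/non-vanishing step of the middle paragraph: once $[\bar y]$ is identified with a non-zero element of the correct Lewark-Lobb eigenspace, the three inequalities are formal consequences of the definitions, but pinning down that identification requires tracking the Mackaay-Vaz chain-level formula for $\beta$ through the idempotent decomposition of $H^\bullet_{\omega_1}$ induced by the three distinct roots of $\omega_1$.
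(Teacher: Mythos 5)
Your strategy for \eqref{eq:inequality1} is essentially the paper's: use $U$-divisibility to produce a homogeneous class $[y]$ with $U^{c}[y]=[\beta_{\omega,Ux_i}(\overline{B})]$ in quantum degree $-2sl(B)-2c$, specialise $U\mapsto 1$, identify the image with a non-zero multiple of the canonical (Gornik/Mackaay--Vaz) generator $\Sigma_{\phi_i}(\underline{w}_{D})$, and compare filtered degrees. But there are concrete gaps. First, a direction issue: with the paper's increasing quantum filtration, the image of a homogeneous class of degree $q$ has filtered degree \emph{at most} $q$, not ``at least''; the argument needs the upper bound $Fdeg([\Sigma_{\phi_i}(\underline{w}_{D})])\leq -2sl(B)-2c$, because $-j_1(K)=j_3(\overline{K})$ is the \emph{maximum} of the filtered degrees over a basis. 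Second, and more seriously, your bound only controls the filtered degree of the canonical generator attached to the single root $x_i$, and nothing in your sketch explains why that particular generator realises the extremal value $-j_1(K)$ rather than something smaller (the multiset of filtered degrees of a basis is not basis-independent; only its maximum is). The paper closes this with a symmetry argument: the endofunctor of Figure \ref{fig:endofunctor}, with $x_1$ replaced by $x_i-x_j$, shows the filtered degrees $q_i(\overline{K})$ of the three canonical generators coincide, so each equals $\max_j q_j(\overline{K})=j_3(\overline{K})=-j_1(K)$. Without this (or an equivalent) step you only obtain $j_1(K)\geq 2sl(B)+2\min_j c_{\omega,Ux_j}(B;U)$, which is weaker than the stated inequality for each $i$.

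Third, your treatment of \eqref{eq:inequality2} and \eqref{eq:inequality3} mischaracterises the invariants: $s_{\omega_1}$ is not an extremal filtered degree but the rescaled average $(j_1+j_2+j_3)/12$, and $\tilde{s}_{\omega_1,x_i}$ is defined by an entirely different construction (the paper deliberately does not reproduce its definition); the ``$(N-1)$-shift'' heuristic for the $-1$ is not a proof. The paper instead deduces \eqref{eq:inequality2} from \eqref{eq:inequality1} via the definition of $s_{\omega_1}$, and \eqref{eq:inequality3} from \eqref{eq:inequality1} via the Lewark--Lobb comparison $\vert j_i - 6\tilde{s}_{\omega_1,x_i}(K)\vert\leq 2$, which is the actual source of the $-1$. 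You would need to invoke these facts (or reprove them) to land on the stated constants.
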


\subsection*{Acknowledgments}
The author wish to thank Prof. Paolo Lisca for his advice, the helpful conversations and his continuous support. Moreover, the author also wishes to thank M. Mackaay and P. Vaz for letting him use the images in Figures \ref{fig:secondRmcoer} and \ref{fig:invterzasl3a}. 
Finally, the author wishes to thank the referees for their valuable comments and suggestions.
This paper is partially excerpted from the author's PhD thesis. During his PhD the author was supported by a PhD scholarship ``Firenze-Perugia-Indam''.

\section{Webs and Foams}\label{sec:webetfoams}
In this section we will briefly review the definition of webs and foams. 
These objects play the same role played by closed $1$-dimensional manifolds and surfaces in Bar-Natan geometric description of Khovanov homology (and its deformations).

\subsection{Webs}
Webs were originally introduced by Greg Kuperberg in \cite{Kuperberg96}, as a tool to study the representation theory of rank $2$ Lie algebras, and used by Khovanov in \cite{Khovanov03} to define a categorification of the $\mathfrak{sl}_3$-Jones polynomial.

\begin{definition}
A \emph{web} $W$ is a directed trivalent planar graph embedded in $\mathbb{R}^{2}$, possibly with components without vertices (\emph{loops}), satisfying the following properties:
\begin{enumerate}[(a)]
\item $W$ has a finite number of vertices and a finite number of loops;
\item there are two types of edges in $W$, the \emph{thin edges} and the \emph{thick edges}, and for each vertex $v\in V(W)$ there is a unique thick edge incident in $v$;
\item each vertex of $W$ is either a source\footnote{A vertex $v$ of a directed graph is a \emph{source} if all edges incident in $v$ are directed outwards from $v$.} or a sink\footnote{A vertex $v$ of a directed graph is a \emph{sink} if all edges incident in $v$ are directed towards $v$.}.
\end{enumerate}
For technical reasons also the empty set is considered a web (the \emph{empty web}). A trivalent directed (abstract) graph $\Gamma$ satisfying (a) and (b) shall be called \emph{abstract web}.
\end{definition}

The distinction between thick and thin edges is necessary to keep track of crossings after their resolution (cf. Subsection \ref{subsection:homologies}). Whenever necessary the thick edges shall be drawn thicker and coloured purple, otherwise no distinction shall be made. 

\subsection{Foams} Roughly speaking, foams are decorated branched surfaces which are singular along a smooth $1$-dimensional manifold of triple points. Let us put aside the decorations and let us start by defining the underlying topological structure of a foam.

A (\emph{topological}) \emph{pre-foam} $\Sigma$ is a compact topological space such that each point has a neighbourhood homeomorphic to one of the four local models in Figure \ref{fig:Localmodelsingcurvefoam}. 
A point $p\in \Sigma$ is called \emph{regular} if it has a neighbourhood which is homeomorphic to either (C) or (D). Non-regular points are called \emph{singular}, and the set of singular points is denoted by $Sing(\Sigma)$. 
The connected components of $\Sigma \setminus Sing(\Sigma) \subseteq \Sigma$ are called \emph{regular regions} of $\Sigma$.
Finally, a \emph{boundary} point for $\Sigma$ is a point which does not have a neighbourhood homeomorphic to either (A) or (C). A topological pre-foam
with empty boundary is called \emph{closed}.
\begin{figure}[H]
\centering
\begin{tikzpicture}
\draw[dashed] ( .866,.5) -- +(-.25,-.5) -- (-1.116,-1)--( -.866,-.5) ;
\draw[fill, color = white, opacity = .75] ( -.866,-.5) -- (-.116,-1) -- (1.614,0) -- (.866,.5) -- cycle;
\draw[dashed] ( -.866,-.5) -- (-.116,-1) -- (1.614,0) -- (.866,.5) -- cycle;
\draw[dashed] ( -.866,-.5) -- (-.866,.5) -- ( .866,1.5) -- (.866,.5);
\draw[thick, red] ( -.866,-.5) --  (.866,.5);
\draw[->, red] ( 1.5,1) .. controls +(-.25 , 0) and +(.25,.25) ..  (.5,.5);
\node at (2.75,1) {\textcolor{red}{singular points}};
\begin{scope}[shift ={+(6,0)}]
\draw[dashed] ( .866,.5) -- +(-.25,-.5) -- (-1.116,-1)--( -.866,-.5) ;
\draw[very thick] ( -.866,-.5) -- (-1.116,-1);
\draw[fill, color = white, opacity = .75] ( -.866,-.5) -- (-.116,-1) -- (1.614,0) -- (.866,.5) -- cycle;
\draw[dashed] ( -.866,-.5) -- (-.116,-1) -- (1.614,0) -- (.866,.5) -- cycle;
\draw[very thick] ( -.866,-.5) -- (-.116,-1);
\draw[dashed] ( -.866,-.5) -- (-.866,.5) -- ( .866,1.5) -- (.866,.5)-- cycle;
\draw[very thick ] ( -.866,-.5) -- (-.866,.5);
\draw[thick, red] ( -.866,-.5) --  (.866,.5);
\end{scope}
\draw[->, red] (4,1) .. controls +(.25 , 0) and +(-.25,.25) ..  (6.1,.25);

\draw[dashed] (-.9,-2) rectangle (1.6,-3);

\draw[dashed] (7.6,-3) -- (7.6,-2) -- (5.1,-2) -- (5.1,-3);
\draw[very thick] (7.6,-3)  -- (5.1,-3);

\node at (0.25,-1.5) {\small{(A)}};
\node at (6.25,-1.5) {\small{(B)}};
\node at (0.25,-3.5) {\small{(C)}};
\node at (6.25,-3.5) {\small{(D)}};

\end{tikzpicture}
\caption{Local models for a pre-foam.}
\label{fig:Localmodelsingcurvefoam}
\end{figure}
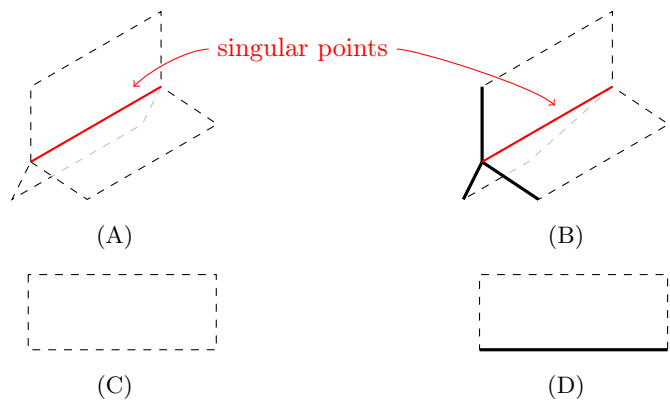

\begin{rem*}
The singular locus of  pre-foam is the disjoint union of circles and arcs, which are called \emph{singular circles} and \emph{singular arcs}. The singular boundary points correspond to the boundary points of the singular arcs.
\end{rem*}

The choice of an atlas\footnote{We mean an open cover of $\Sigma$ together with a homeomorphism of each element of the cover with one of the local models in Figure \ref{fig:Localmodelsingcurvefoam}.} on a pre-foam determines an atlas on each regular region and also on $Sing(\Sigma)$. 
A \emph{smooth pre-foam} is a topological pre-foam with a chosen topological atlas such that the induced atlases on the regular regions and on the singular locus are smooth atlases. 
Similarly, given a pre-foam $\Sigma$, an \emph{orientation} on $\Sigma$ is the choice of an orientation of the closure of each regular region in such a way that the orientation induced in the intersection of two closed regions agrees.

\begin{rem*}
If we choose an orientation on a orientable pre-foam, this induces an orientation on the singular locus.
\end{rem*}

\begin{rem*}
The boundary of a topological pre-foam is a (possibly empty) finite trivalent graph whose vertices correspond to singular boundary points. If the pre-foam is oriented its boundary is a directed graph. Moreover, each vertex of the boundary graph of an oriented pre-foam is either a sink or a source. In other words, the boundary of an oriented pre-foam is an abstract web.
\end{rem*}

\begin{definition}
A \emph{decorated pre-foam} is an oriented pre-foam $\Sigma$ together with the following data:
\begin{enumerate}[(a)]
\item a finite number (possibly zero) of marked points, called \emph{dots}, in the interior of each regular region of $\Sigma$;
\item a cyclic order on the regular regions incident to a singular arc or circle.
\end{enumerate}
\end{definition}

It is possible to define a category $\mathbf{PreFoam}$ whose objects are abstract webs and whose morphisms are formal $R$-linear combinations of the triples $(\Sigma,\partial_{0}\Sigma ,\partial_{1}\Sigma)$ satisfying the following properties
\begin{itemize}
\item[$\triangleright$] $\Sigma$ is a decorated pre-foam;
\item[$\triangleright$] $\partial_{0}\Sigma,\: \partial_{1}\Sigma \in Obj(\mathbf{PreFoam})$, $\partial_{0}\Sigma$ is the source object and $\partial_{1} \Sigma$ the target object of the morphism;
\item[$\triangleright$] $\partial\Sigma = \partial_{0} \Sigma \sqcup -\partial_1 \Sigma$, where the minus sign denotes the reversal of the orientation;
\item[$\triangleright$] the triple is seen up to boundary fixing isotopies which do not change the regular regions of the dots and preserve the ordering of the components near each singular arc.
\end{itemize}
Finally, the composition of two triples $(\Sigma,\partial_{0}\Sigma ,\partial_{1}\Sigma)$ and $(\Sigma^\prime,\partial_{1}\Sigma ,\partial_{1}\Sigma^\prime)$ is defined as the triple  $(\Sigma^{\prime\prime},\partial_{0}\Sigma ,\partial_{1}\Sigma^{\prime})$, where $\Sigma^{\prime\prime}$ is obtained by glueing $\Sigma$ and $\Sigma^\prime$ along $\partial_{1}\Sigma$.
\begin{definition}
A \emph{foam} is a decorated pre-foam properly and smoothly\footnote{That is in such a way that the restriction of the embedding to each regular region and to the singular locus is smooth.} embedded in $\mathbb{R}^{2} \times I$. Moreover, we ask the cyclic order of the regular regions at each singular arc or circle to coincide with the cyclic order induced by rotating clockwise\footnote{We suppose fixed an orientation of $\mathbb{R}^{2}\times I$.} around a singular arc or circle. Foams will be considered up to ambient isotopies of $\mathbb{R}^{2}\times I$ which fix the boundary of the foam and do not change the regular regions of the dots.
\end{definition}

Given two webs $W_{0}$ and $W_{1}$, a \emph{foam between $W_{0}$ and $W_{1}$} is a foam $F$ such that 
\[ F  \cap \mathbb{R}^{2} \times \{ 0 \} = W_{0}\quad \text{and}\quad F  \cap \mathbb{R}^{2} \times \{ 1 \} = -W_{1}.\]
The category $\mathbf{Foam}$ is the category whose objects are webs, whose morphisms are a $R$-linear combinations of foams between two webs, and whose composition is defined as the glueing of two foams along the shared boundary. For technical reasons also the empty foam is included in $\mathbf{Foam}$ as an element of $Hom_{\mathbf{Foam}}(\emptyset,\emptyset)$.

\subsection{Local relations}

Local relations are equalities among (linear combinations of) foams which are identical except inside a small ball. The relations we are concerned with can be divided into two types: 
\begin{itemize}
\item[$\triangleright$] \emph{reduction relations} (c.f. Figure \ref{fig:localrel1}), 
\item[$\triangleright$] \emph{evaluation relations} (c.f. Figure \ref{fig:localrel2}).
\end{itemize}
The reduction relations depend on the choice of a polynomial $\omega(x)\in R [x]$ (the \emph{potential}) of the form
\[ \omega(x) = x^{3} + a_2x^{2} + a_1x + a_0. \]
Thus, we will hereby suppose $\omega$ fixed. In the case $R$ is a graded ring, one may require the coefficients $a_2$, $a_1$ and $a_0$ and the polynomial $\omega$ to be homogeneous in order to obtain a graded theory. Let us get back to the local relations and postpone the matter of the gradings. 

The reduction relations allow one to reduce either the number of handles (\emph{genus reduction relation} (GR)) or the number of dots  (\emph{dot reduction relation} (DR)) at the expense of trading a single foam for a linear combination of foams. 

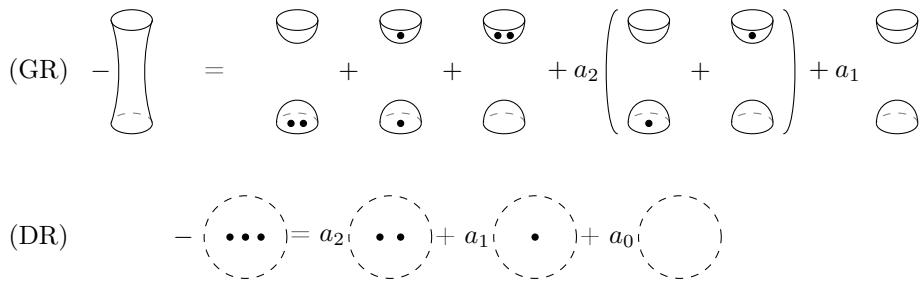
\begin{figure}[H]
\centering
\begin{tikzpicture}[scale =.275]

\begin{scope}[shift = {(-2,0)}]
\draw (-3.5,18)circle (  1 and .5);
\draw (-2.5,13) arc (0:-180:1 and .5);
\draw[dashed, gray] (-2.5,13) arc (0:180:1 and .5);
\draw (-2.5,13) .. controls +(-.5,1.5) and +(-.5,-1.5) .. (-2.5,18);
\draw (-4.5,13) .. controls +(.5,1.5) and +(.5,-1.5) .. (-4.5,18);

\draw (4.5,18)circle ( 1 and .5);
\draw (5.5,13) arc (0:-180:1 and .5);
\draw[dashed, gray] (5.5,13) arc (0:180:1 and .5);
\draw (5.5,13) .. controls +(0,1.5) and +(0,1.5) .. (3.5,13);
\draw (5.5,18) .. controls +(0,-1.5) and +(0,-1.5) .. (3.5,18);

\draw (9.5,18)circle ( 1 and .5);
\draw (10.5,13) arc (0:-180:1 and .5);
\draw[dashed,gray] (10.5,13) arc (0:180:1 and .5);
\draw (10.5,13) .. controls +(0,1.5) and +(0,1.5) .. (8.5,13);
\draw (10.5,18) .. controls +(0,-1.5) and +(0,-1.5) .. (8.5,18);

\draw (14.5,18)circle ( 1 and .5);
\draw (15.5,13) arc (0:-180:1 and .5);
\draw[dashed,gray] (15.5,13) arc (0:180:1 and .5);
\draw (15.5,13) .. controls +(0,1.5) and +(0,1.5) .. (13.5,13);
\draw (15.5,18) .. controls +(0,-1.5) and +(0,-1.5) .. (13.5,18);

\draw (21.5,18)circle ( 1 and .5);
\draw (22.5,13) arc (0:-180:1 and .5);
\draw[dashed,gray] (22.5,13) arc (0:180:1 and .5);
\draw (22.5,13) .. controls +(0,1.5) and +(0,1.5) .. (20.5,13);
\draw (22.5,18) .. controls +(0,-1.5) and +(0,-1.5) .. (20.5,18);

\draw (26.5,18)circle ( 1 and .5);
\draw (27.5,13) arc (0:-180:1 and .5);
\draw[dashed,gray] (27.5,13) arc (0:180:1 and .5);
\draw (27.5,13) .. controls +(0,1.5) and +(0,1.5) .. (25.5,13);
\draw (27.5,18) .. controls +(0,-1.5) and +(0,-1.5) .. (25.5,18);

\draw (33.5,18)circle ( 1 and .5);
\draw (34.5,13) arc (0:-180:1 and .5);
\draw[dashed,gray] (34.5,13) arc (0:180:1 and .5);
\draw (34.5,13) .. controls +(0,1.5) and +(0,1.5) .. (32.5,13);
\draw (34.5,18) .. controls +(0,-1.5) and +(0,-1.5) .. (32.5,18);

\node at (-8,15.5)  {(GR)};
\node at (0.5,15.5)  {=};
\node at (-5,15.5)  {$-$};
\node at (7 ,15.5)  {$+$};
\node at (12 ,15.5)  {$+$};
\node at (18 ,15.5)  {$+\: a_2\:$};
\draw (20,18.5) .. controls +(-.75,0) and +(-.75,0) .. (20,12.5);
\draw (28,18.5) .. controls +(.75,0) and +(.75,0) .. (28,12.5);
\node at (24 ,15.5)  {$+$};
\node at (30.5 ,15.5)  {$+\: a_1$};

\draw[fill] (4.8,13) circle (0.15) ;
\draw[fill] (4.2,13) circle (0.15) ;

\draw[fill] (9.5,13) circle (0.15) ;
\draw[fill] (9.5,17.25) circle (0.15) ;

\draw[fill] (14.8,17.25) circle (0.15) ;
\draw[fill] (14.2,17.25) circle (0.15) ;

\draw[fill] (21.5,13) circle (0.15) ;
\draw[fill] (26.5,17.25) circle (0.15) ;

\end{scope}

\draw[dashed] (0,7.5) circle (2);
\draw[dashed] (7,7.5) circle (2);
\draw[dashed] (14,7.5) circle (2);
\draw[dashed] (21,7.5) circle (2);

\node at (3.5,7.5)  {= $a_2$};
\node at (10.5,7.5)  {+ $a_1$};
\node at (17.5,7.5)  {+ $a_0$};
\node at (-3,7.5)  {$-$};

\node at (-10,7.5)  {(DR)};

\draw[fill] (7.5,7.5) circle (0.15) ;
\draw[fill] (6.5,7.5) circle (0.15) ;
\draw[fill] (0,7.5) circle (0.15) ;
\draw[fill] (0.75,7.5) circle (0.15) ;
\draw[fill] (-.75,7.5) circle (0.15) ;
\draw[fill] (14,7.5) circle (0.15) ;
\end{tikzpicture}
\caption{Reduction relations.}
\label{fig:localrel1}
\end{figure}

There are two types of evaluation relations. The first type are called \emph{sphere relations} (S), and concern spheres with less than $4$ dots.  The second type are called \emph{theta foam relations} ($\mathrm{\Theta}$), and concern theta foams (that is spheres with a disk glued along the equator) with $2$ dots or less on each region.

\begin{figure}[H]
\centering
\begin{tikzpicture}[scale = .3]
\draw (0,0) circle (2);
\draw (7,0) circle (2);
\draw (19,0) circle (2);

\draw[gray,dashed] (-2,0) arc (-180:0: 2 and 1);
\draw[gray,dashed,opacity = .5] (-2,0) arc (180:0: 2 and 1);

\draw[gray,dashed] (5,0) arc (-180:0: 2 and 1);
\draw[gray,dashed,opacity = .5] (5,0) arc (180:0: 2 and 1);

\draw[gray,dashed] (17,0) arc (-180:0: 2 and 1);
\draw[gray,dashed,opacity = .5] (17,0) arc (180:0: 2 and 1);

\draw[fill] (7,-.75) circle (0.15) ;
\draw[fill] (19.5,-.75) circle (0.15) ;
\draw[fill] (18.5,-.75) circle (0.15) ;

\node at (3.5,0)  {=};
\node at (10.5,0)  {=};
\node at (12.5,0)  {0};

\node at (22.5,0)  {=};
\node at (24,0)  {-1};
\node at (-5,0)  {(S)};

\node at (-5,-7)  {($\Theta$)};
\draw (0,-7) circle (2);
\draw[pattern color = red, pattern= north west lines, opacity = .45] (0,-7) circle (2 and 1);
\draw[thick,red] (0,-8) arc (-90:-180: 2 and 1);
\draw[->,thick,red] (2,-7) arc (0:-90: 2 and 1);
\draw[dashed,red] (-2,-7) arc (180:0: 2 and 1);

\draw (8.5,-7) circle (2);
\draw[pattern color = red, pattern= north west lines, fill, opacity = .45] (8.5,-7) circle (2 and 1);
\draw[<-,thick,red] (8.5,-8) arc (-90:-180: 2 and 1);
\draw[thick,red] (10.5,-7) arc (0:-90: 2 and 1);
\draw[dashed,red] (6.5,-7) arc (180:0: 2 and 1);
\node at (4.5,-7) {$=\:-$};
\node at (26,-7) {\small{$=\begin{cases} 1 & ( n_1, n_2, n_3 ) = (1,2,0)\:\text{or a cyclic permutation},\\ -1 & ( n_1, n_2, n_3 ) = (2,1,0)\:\text{or a cyclic permutation},\\%
0 & n_1,n_2,n_3 \leq 2\text{, and not in the previous cases}\\\end{cases}$}};
\node at (8.5,-5.5) {\small{$n_1$}};
\node at (8.5, -7) {\small{$n_2$}};
\node at (8.5,-8.6) {\small{$n_{3}$}};
\node at (0,-5.5) {\small{$n_1$}};
\node at (0, -7) {\small{$n_2$}};
\node at (0,-8.6) {\small{$n_{3}$}};

\end{tikzpicture}
\caption{Evaluation relations. The numbers $n_1$, $n_2$ and $n_3$ on the theta foam indicate the number of dots in the corresponding region.}
\label{fig:localrel2}
\end{figure}

Modulo the local relations, each closed foam is equivalent to a multiple of the empty foam; using the reduction relations one reduces a the closed foam to a $R$-linear combination of (disjoint unions of) spheres and theta foams with less than three dots in each regular region. Finally, one uses the evaluation relations to obtain a multiple of the empty foam. 
For each pair of webs $W_0$ and $W_1$, and each foam $F\in Hom_{\mathbf{Foam}}(W_0, W_1)$, there is a well defined $R$-bilinear pairing
\[ \langle \cdot \vert \cdot \rangle_F: Hom_{\mathbf{Foam}}(\emptyset, W_0) \otimes Hom_{\mathbf{Foam}}(W_1, \emptyset) \longrightarrow R (= R \langle \emptyset \rangle) \] 
given by ``capping off'' $F$ with an element of $Hom_{\mathbf{Foam}}(\emptyset, W_0)$ and an element of $Hom_{\mathbf{Foam}}(W_1, \emptyset)$, and evaluating it.

Now, we can define the category $\mathbf{Foam}_{/\ell}$, as the category whose object are the same as $\mathbf{Foam}$, but the morphisms are considered to be equal if the corresponding bilinear forms are equal. Using the local relations it is possible to prove the following result. The reader is referred to \cite{Mackaayvaz07} for a proof.
\begin{proposition}[Mackaay-Vaz, \cite{Mackaayvaz07}]\label{proposition:Mackaayvazrel}
The following local relations hold in $\mathbf{Foam}_{/\ell}$
\begin{figure}[H]
\centering
\usetikzlibrary{patterns}
\begin{tikzpicture}[scale = .25]

\draw[] (3,8) arc (-90:90:.5 and 1);
\draw (-3,8) arc (-90:90:.5 and 1);
\draw[dashed] (3,10) arc (90:270:.5 and 1);
\draw (-3,10) arc (90:270:.5 and 1);
\draw (-1,5) arc (180:360:1 and .5);
\draw[dashed] (1,5) arc (0:180:1 and .5);

\draw (-3,8) -- (3,8);
\draw (-3,10) -- (3,10);
\draw (-1,5) arc (180:0:1 and 1);

\draw[pattern = north west lines, pattern color = red] (0,9) circle (.5 and 1);
\draw[color =white] (0,9) circle (.5 and 1);
\draw[thick, ->] (0,8) arc (-90:0:.5 and 1); 
\draw[ thick]  (.5,9) arc (0:90:.5 and 1);
\draw[dashed, thick] (0,10) arc (90:270:.5 and 1);

\draw[fill] (0,9) circle (.15);

\node at (15,7) {$=$};

\begin{scope}[shift = {+(20,0)}]
\draw[] (3,8) arc (-90:90:.5 and 1);
\draw (-3,8) arc (-90:90:.5 and 1);
\draw[dashed] (3,10) arc (90:270:.5 and 1);
\draw (-3,10) arc (90:270:.5 and 1);
\draw (-1,5) arc (180:360:1 and .5);
\draw[dashed] (1,5) arc (0:180:1 and .5);

\draw (-3,8) -- (3,8);
\draw (-3,10) -- (3,10);
\draw (-1,5) arc (180:0:1 and 1);

\draw[pattern = north west lines, pattern color = red] (0,9) circle (.5 and 1);
\draw[color =white] (0,9) circle (.5 and 1);
\draw[thick, ->] (0,8) arc (-90:0:.5 and 1); 
\draw[ thick]  (.5,9) arc (0:90:.5 and 1);
\draw[dashed, thick] (0,10) arc (90:270:.5 and 1);

\draw[fill] (0,5) circle (.15);
\end{scope}
\node at (5,7) {$+$};

\begin{scope}[shift = {+(10,0)}]
\draw[] (3,8) arc (-90:90:.5 and 1);
\draw (-3,8) arc (-90:90:.5 and 1);
\draw[dashed] (3,10) arc (90:270:.5 and 1);
\draw (-3,10) arc (90:270:.5 and 1);
\draw (-1,5) arc (180:360:1 and .5);
\draw[dashed] (1,5) arc (0:180:1 and .5);

\draw (1,5) .. controls +(0,1) and +(-1.5,0) .. (3,8);
\draw (-1,5) .. controls +(0,2) and +(-3,0) .. (3,10);
\draw (-3,10) arc (90:-90:1 and 1);

\end{scope}
\node at (25,7) {$+$};

\begin{scope}[shift = {+(30,0)}]
\draw[] (3,8) arc (-90:90:.5 and 1);
\draw (-3,8) arc (-90:90:.5 and 1);
\draw[dashed] (3,10) arc (90:270:.5 and 1);
\draw (-3,10) arc (90:270:.5 and 1);
\draw (-1,5) arc (180:360:1 and .5);
\draw[dashed] (1,5) arc (0:180:1 and .5);

\draw (-1,5) .. controls +(0,1) and +(1.5,0) .. (-3,8);
\draw (1,5) .. controls +(0,2) and +(3,0) .. (-3,10);
\draw (3,10) arc (90:270:1 and 1);

\end{scope}

\end{tikzpicture}

\begin{tikzpicture}[scale = .25]
\draw[white] (-1,6)--(1,6);
\draw (-1,.5)  arc (-180:0:1 and .5);
\draw[dashed] (-1,.5)  arc (180:0:1 and .5);

\draw (0,4)  circle (-1 and .5);
\draw[pattern = north west lines, pattern color = red] (3,4)  - - (1,4) --  (1,.5)  - - (3,.5);
\draw[pattern = north west lines, pattern color = red] (-3,4)  - - (-1,4) --  (-1,.5)  - - (-3,.5);
\draw (3,4)  - - (1,4)  (1,.5)  - - (3,.5);
\draw[thick, <-] (-1,2.25)  - - (-1,.5);
\draw[thick, ->] (1,4)  - - (1,2.25);
\draw[thick] (-1,4)  - - (-1,.5);
\draw[thick] (1,4)  - - (1,.5);
\draw (-3,4)  - - (-1,4)  (-1,.5)  - - (-3,.5);
\draw (3,4)  - - (1,4)  (1,.5)  - - (3,.5);

\node at (5,2.25) {=};
\begin{scope}[shift = {+(10,0)}]
\draw[pattern = north west lines, pattern color = red] (-3,4) -- (-1,4) arc (-180:0:1) -- (3,4) -- (3,.5) -- (1,.5) arc (0:180:1)  -- (-3,.5) -- cycle;
\draw[white, thick] (-3,4) -- (-1,4) arc (-180:0:1) -- (3,4) -- (3,.5) -- (1,.5) arc (0:180:1)  -- (-3,.5) -- cycle;
\draw (-1,.5)  arc (-180:0:1 and .5);
\draw[dashed] (-1,.5)  arc (180:0:1 and .5);
\draw[thick, ->] (-1,.5)  arc (180:0:1);

\draw (0,4)  circle (-1 and .5);
\draw[thick, <-] (-1,4)  arc (-180:0:1);

\draw (-3,4)  - - (-1,4)  (-1,.5)  - - (-3,.5);
\draw (3,4)  - - (1,4)  (1,.5)  - - (3,.5);
\draw[fill] (0,4) circle  (.15);
\end{scope}
\node at (15,2.25) {+};
\begin{scope}[shift = {+(20,0)}]
\draw[pattern = north west lines, pattern color = red] (-3,4) -- (-1,4) arc (-180:0:1) -- (3,4) -- (3,.5) -- (1,.5) arc (0:180:1)  -- (-3,.5) -- cycle;
\draw[white, thick] (-3,4) -- (-1,4) arc (-180:0:1) -- (3,4) -- (3,.5) -- (1,.5) arc (0:180:1)  -- (-3,.5) -- cycle;
\draw (-1,.5)  arc (-180:0:1 and .5);
\draw[dashed] (-1,.5)  arc (180:0:1 and .5);
\draw[thick, ->] (-1,.5)  arc (180:0:1);

\draw (0,4)  circle (-1 and .5);
\draw[thick, <-] (-1,4)  arc (-180:0:1);

\draw (-3,4)  - - (-1,4)  (-1,.5)  - - (-3,.5);
\draw (3,4)  - - (1,4)  (1,.5)  - - (3,.5);
\draw[fill] (0,.5) circle  (.15);
\end{scope}

\begin{scope}[shift ={+(-18,0)}]
\draw (-1,.5)  arc (-180:0:1 and .5);
\draw[dashed] (-1,.5)  arc (180:0:1 and .5);

\draw (0,4)  circle (-1 and .5);

\draw[pattern = north west lines, pattern color = red] (0,2.25)  circle (1 and .5);
\draw[white, thick] (0,2.25)  circle (1 and .5);
\draw[dashed, thick] (-1,2.25)  arc (180:0:1 and .5);
\draw[thick] (0,1.75)  arc (270:360:1 and .5);
\draw[->, thick] (-1,2.25)  arc (180:270:1 and .5);
\draw[] (-1,4)  - - (-1,.5);
\draw[] (1,4)  - - (1,.5);

\node at (2.5,2.25) {=};
\begin{scope}[shift = {+(5,0)}]

\draw (-1,.5)  arc (-180:0:1 and .5);
\draw[dashed] (-1,.5)  arc (180:0:1 and .5);
\draw[] (-1,.5)  arc (180:0:1);

\draw (0,4)  circle (-1 and .5);
\draw[] (-1,4)  arc (-180:0:1);

\draw[fill] (0,4) circle  (.125);
\end{scope}
\node at (7.5,2.25) {$-$};
\begin{scope}[shift = {+(10,0)}]

\draw (-1,.5)  arc (-180:0:1 and .5);
\draw[dashed] (-1,.5)  arc (180:0:1 and .5);
\draw[] (-1,.5)  arc (180:0:1);

\draw (0,4)  circle (-1 and .5);
\draw[] (-1,4)  arc (-180:0:1);

\draw[fill] (0,.5) circle  (.125);
\end{scope}
\end{scope}
\end{tikzpicture}
\end{figure}
\begin{figure}[H]
\begin{tikzpicture}[scale=.5]
\draw[dashed] ( .866,.5) -- +(-.25,-.75) -- (-1.116,-1.25)--( -.866,-.5) ;
\draw[fill, color = white, opacity = .76] ( -.866,-.5) -- (.116,-.75) -- (1.73,.25) -- (.866,.5) -- cycle;
\draw[dashed] ( -.866,-.5) -- (.116,-.75) -- (1.73,.25) -- (.866,.5) -- cycle;\draw[dashed] ( -.866,-.5) -- (-.866,.5) -- ( .866,1.5) -- (.866,.5);
\draw[thick, red] ( -.866,-.5) --  (.866,.5);
\draw[fill] (-.5,.125) circle (0.075);

\begin{scope}[shift ={+(5,0)}]
\draw[dashed] ( .866,.5) -- (.616,-.25) -- (-1.116,-1.25)--( -.866,-.5) ;
\draw[fill, color = white, opacity = .76] ( -.866,-.5) -- (.116,-.75) -- (1.73,.25) -- (.866,.5) -- cycle;
\draw[dashed] ( -.866,-.5) -- (.116,-.75) -- (1.73,.25) -- (.866,.5) -- cycle;\draw[dashed] ( -.866,-.5) -- (-.866,.5) -- ( .866,1.5) -- (.866,.5);
\draw[thick, red] ( -.866,-.5) --  (.866,.5);
\draw[fill] (0,-.25) circle (0.075);
\end{scope}

\begin{scope}[shift ={+(10,0)}]
\draw[dashed] ( .866,.5) -- (.616,-.25) -- (-1.116,-1.25)--( -.866,-.5) ;
\draw[fill, color = white, opacity = .76] ( -.866,-.5) -- (.116,-.75) -- (1.73,.25) -- (.866,.5) -- cycle;
\draw[dashed] ( -.866,-.5) -- (.116,-.75) -- (1.73,.25) -- (.866,.5) -- cycle;\draw[dashed] ( -.866,-.5) -- (-.866,.5) -- ( .866,1.5) -- (.866,.5);
\draw[thick, red] ( -.866,-.5) --  (.866,.5);
\draw[fill] (-.75,-.75) circle (0.075);
\end{scope}

\begin{scope}[shift ={+(15,0)}]
\draw[dashed] ( .866,.5) -- (.616,-.25) -- (-1.116,-1.25)--( -.866,-.5) ;
\draw[fill, color = white, opacity = .76] ( -.866,-.5) -- (.116,-.75) -- (1.73,.25) -- (.866,.5) -- cycle;
\draw[dashed] ( -.866,-.5) -- (.116,-.75) -- (1.73,.25) -- (.866,.5) -- cycle;\draw[dashed] ( -.866,-.5) -- (-.866,.5) -- ( .866,1.5) -- (.866,.5);
\draw[thick, red] ( -.866,-.5) --  (.866,.5);
\end{scope}

\node  at (2.75,0) {+};
\node  at (7.75,0) {+};
\node  at (13,0) {= $\ -a_2$};

\begin{scope}[shift={(0,-3)}]
\draw[dashed] ( .866,.5) -- +(-.25,-.75) -- (-1.116,-1.25)--( -.866,-.5) ;
\draw[fill, color = white, opacity = .76] ( -.866,-.5) -- (.116,-.75) -- (1.73,.25) -- (.866,.5) -- cycle;
\draw[dashed] ( -.866,-.5) -- (.116,-.75) -- (1.73,.25) -- (.866,.5) -- cycle;\draw[dashed] ( -.866,-.5) -- (-.866,.5) -- ( .866,1.5) -- (.866,.5);
\draw[thick, red] ( -.866,-.5) --  (.866,.5);
\draw[fill] (-.5,.125) circle (0.075);
\draw[fill] (-.75,-.75) circle (0.075);

\begin{scope}[shift ={+(5,0)}]
\draw[dashed] ( .866,.5) -- (.616,-.25) -- (-1.116,-1.25)--( -.866,-.5) ;
\draw[fill, color = white, opacity = .76] ( -.866,-.5) -- (.116,-.75) -- (1.73,.25) -- (.866,.5) -- cycle;
\draw[dashed] ( -.866,-.5) -- (.116,-.75) -- (1.73,.25) -- (.866,.5) -- cycle;\draw[dashed] ( -.866,-.5) -- (-.866,.5) -- ( .866,1.5) -- (.866,.5);
\draw[thick, red] ( -.866,-.5) --  (.866,.5);
\draw[fill] (-.5,.125) circle (0.075);
\draw[fill] (0,-.25) circle (0.075);
\end{scope}

\begin{scope}[shift ={+(10,0)}]
\draw[dashed] ( .866,.5) -- (.616,-.25) -- (-1.116,-1.25)--( -.866,-.5) ;
\draw[fill, color = white, opacity = .76] ( -.866,-.5) -- (.116,-.75) -- (1.73,.25) -- (.866,.5) -- cycle;
\draw[dashed] ( -.866,-.5) -- (.116,-.75) -- (1.73,.25) -- (.866,.5) -- cycle;\draw[dashed] ( -.866,-.5) -- (-.866,.5) -- ( .866,1.5) -- (.866,.5);
\draw[thick, red] ( -.866,-.5) --  (.866,.5);
\draw[fill] (0,-.25) circle (0.075);
\draw[fill] (-.75,-.75) circle (0.075);
\end{scope}

\begin{scope}[shift ={+(15,0)}]
\draw[dashed] ( .866,.5) -- (.616,-.25) -- (-1.116,-1.25)--( -.866,-.5) ;
\draw[fill, color = white, opacity = .76] ( -.866,-.5) -- (.116,-.75) -- (1.73,.25) -- (.866,.5) -- cycle;
\draw[dashed] ( -.866,-.5) -- (.116,-.75) -- (1.73,.25) -- (.866,.5) -- cycle;\draw[dashed] ( -.866,-.5) -- (-.866,.5) -- ( .866,1.5) -- (.866,.5);
\draw[thick, red] ( -.866,-.5) --  (.866,.5);
\end{scope}

\node  at (2.75,0) {+};
\node  at (7.75,0) {+};
\node  at (13,0) {= $\ a_1$};
\end{scope}

\begin{scope}[shift ={+(5,-6)}]
\draw[dashed] ( .866,.5) -- (.616,-.25) -- (-1.116,-1.25)--( -.866,-.5) ;
\draw[fill, color = white, opacity = .76] ( -.866,-.5) -- (.116,-.75) -- (1.73,.25) -- (.866,.5) -- cycle;
\draw[dashed] ( -.866,-.5) -- (.116,-.75) -- (1.73,.25) -- (.866,.5) -- cycle;\draw[dashed] ( -.866,-.5) -- (-.866,.5) -- ( .866,1.5) -- (.866,.5);
\draw[thick, red] ( -.866,-.5) --  (.866,.5);
\draw[fill] (-.75,-.75) circle (0.075);
\draw[fill] (0,-.25) circle (0.075);
\draw[fill] (-.5,.125) circle (0.075);
\end{scope}

\begin{scope}[shift ={+(10,-6)}]
\draw[dashed] ( .866,.5) -- (.616,-.25) -- (-1.116,-1.25)--( -.866,-.5) ;
\draw[fill, color = white, opacity = .76] ( -.866,-.5) -- (.116,-.75) -- (1.73,.25) -- (.866,.5) -- cycle;
\draw[dashed] ( -.866,-.5) -- (.116,-.75) -- (1.73,.25) -- (.866,.5) -- cycle;\draw[dashed] ( -.866,-.5) -- (-.866,.5) -- ( .866,1.5) -- (.866,.5);
\draw[thick, red] ( -.866,-.5) --  (.866,.5);
\end{scope}

\node  at (8,-6) {= $\ -a_0$};

\node  at (-2.5,0) {(DP1)};
\node  at (-2.5,-3) {(DP2)};
\node  at (-2.5,-6) {(DP3)};
\end{tikzpicture}
\end{figure}
where (DP1), (DP2) and (DP3) are also called \emph{dot permutation relations}.\qed
\end{proposition}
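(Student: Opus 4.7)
My plan is to prove every relation in $\mathbf{Foam}_{/\ell}$ by invoking the defining property of the quotient: two morphisms $F_1, F_2\in Hom_{\mathbf{Foam}}(W_0,W_1)$ coincide in $\mathbf{Foam}_{/\ell}$ if and only if $\langle \alpha \mid F_1 \circ \beta \rangle_{\mathrm{id}} = \langle \alpha \mid F_2 \circ \beta \rangle_{\mathrm{id}}$ for every $\alpha \in Hom_{\mathbf{Foam}}(\emptyset,W_0)$ and every $\beta \in Hom_{\mathbf{Foam}}(W_1,\emptyset)$. Hence, for each identity it suffices to cap off both sides by arbitrary test foams $\alpha$ and $\beta$ and verify that, after reduction via (GR), (DR), (S) and ($\Theta$), the two resulting closed-foam evaluations in $R$ agree. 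Since the two sides of each relation differ only inside a small ball, all the relevant computation is \emph{local} in that ball, while the test caps contribute identically on either side and can be carried through as a ``black box''.

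I would begin with the dot permutation relations (DP1)--(DP3), which I regard as the heart of the proposition. Near the singular arc, each of the three regions supplies a ``dot variable'' $x_i$ that, by (DR), satisfies $x_i^3 = -a_2 x_i^2 - a_1 x_i - a_0$; equivalently each $x_i$ is a root of $\omega$ in the local dot-algebra. Capping with arbitrary test foams and reducing to closed surfaces, the computation boils down to evaluations of theta foams with at most two dots per region, which by ($\Theta$) are $\pm 1$ precisely on (anti)cyclic permutations of $(0,1,2)$ and vanish otherwise. A direct bookkeeping then shows that the three elementary symmetric combinations of the local $x_i$ evaluate against every cap to $-a_2$, $a_1$, $-a_0$ respectively, which is exactly (DP1), (DP2), (DP3) read as the identities $e_1=-a_2$, $e_2=a_1$, $e_3=-a_0$.

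The first three ``bamboo'' or ``tube-with-disk'' identities are proved by the same mechanism. After capping with arbitrary $\alpha$ and $\beta$, each side produces a closed foam in which the tube-with-disk configuration can be simplified using ($\Theta$) together with (DR) to transfer dots between the internal disk and the two annular caps, and using (GR) to absorb any extra handles; a short case split, organised by how many dots sit on each of the three regions meeting at the singular circle inside the ball, matches the evaluation of the left-hand side with that of the right-hand sum. The main obstacle, to my eye, is ensuring that this case split is genuinely \emph{finite}: this is guaranteed because, modulo (DR), the dot powers on each regular region may be assumed to be at most $2$, and consequently the $R$-modules $Hom_{\mathbf{Foam}/\ell}(\emptyset,W_i)$ for the relevant boundary webs $W_i$ are finitely generated. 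With this finiteness in hand the verification becomes a mechanical check against the basic evaluations recalled in Figures \ref{fig:localrel1} and \ref{fig:localrel2}, carried out in detail in \cite{Mackaayvaz07}.
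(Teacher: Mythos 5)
The paper itself offers no proof of this proposition---it is quoted from \cite{Mackaayvaz07} with the reader referred there---so the comparison can only be with the strategy of that reference, and your strategy is indeed the same one: equality in $\mathbf{Foam}_{/\ell}$ is \emph{defined} by equality of the bilinear pairings against all test foams, so each identity is checked by capping off and evaluating closed foams via (GR), (DR), (S) and $(\Theta)$; and your reading of (DP1)--(DP3) as the elementary symmetric identities $e_1=-a_2$, $e_2=a_1$, $e_3=-a_0$ in the three local dot variables (each a root of $\omega$ by (DR)) is exactly how these relations are rewritten and used later in the paper, in the proof of Lemma \ref{lemma:dbetavanish}.

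The one step you should not treat as automatic is the reduction of an \emph{arbitrary} capped-off foam to disjoint unions of spheres and theta foams with at most two dots per region. The relations (GR) and (DR) only simplify the regular regions (genus and dots); they do not by themselves let you cut a general closed foam open along the tubes and singular circles produced by arbitrary test caps---that is precisely the job of closed-seam/neck-cutting identities of the type appearing as the \emph{first} relation of this very proposition, so an unordered appeal to ``reduction'' risks circularity, and the same issue infects your finite-generation claim for $Hom_{\mathbf{Foam}_{/\ell}}(\emptyset,W)$ when $W$ has vertices (e.g.\ the theta web). The fix, as in \cite{Mackaayvaz07}, is either to derive the relations in an order in which each uses only the defining relations and the previously established ones, or to first establish independently that the standard dotted disks and dotted half-theta foams span the relevant $Hom$-modules; once that is in place, your check against the theta-foam evaluations is a correct and finite computation.
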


\section{The $\mathfrak{sl}_3$-link homologies via Foams}

In this section we shall review the construction of a link homology theory via webs and foams. This construction consists of three steps. First we need some machinery coming from category theory, namely cubes and abstract complexes. Then, we shall describe how to build a cube from a link diagram, and apply the machinery developed to get a formal ``geometric'' complex of foams and webs. Finally, we make use of Bar-Natan's tautological functors to obtain an honest chain complex and a homology theory.

\subsection{Cubes in categories and abstract complexes}\label{subs:fromcubestcomplexes}

Let us review the construction of the complex in the category of webs and foams associated to an oriented link diagram. To define this complex, a bit of abstract nonsense is necessary.
Denote by $Q_{n}$ the standard $n$-dimensional cube $[0,1]^{n}$. Orient each edge of $Q_n$ from the vertex with the lowest number of $1$s to the vertex with highest number of $1$s.

Let $R$ be a ring. An \emph{$R$-linear category} is a (small) category $\mathbf{C}$ such that, for each pair of objects $A$, $B$, the set of morphisms $Hom_{\mathbf{C}}(A,B)$ has a structure of $R$-module, and the composition is bilinear with respect to this structure. A $\mathbb{Z}$-linear category is often called a \emph{pre-additive category}.

\begin{definition}
A \emph{$n$-cube in a category $\mathbf{C}$}\index{Cube in a category} is the assignment of an object $O_{v}$ to each vertex $v$ of $Q_{n}$ and a morphism $F(v,v^{\prime})\in Hom_{\mathbf{C}}(O_v,O_{v^\prime})$ to the edge from $v$ to $v^\prime$, for each (ordered) pair of vertices $v$, $v^\prime$.
A $n$-cube in a $R$-linear category $\mathbf{C}$ is \emph{commutative} (resp. \emph{skew-commutative}) if for each square the composition of the morphism on the edges commutes (resp. anti-commutes).
 \end{definition}

Given  a commutative cube, it is easy to prove that it is always possible to change the signs of the morphisms on the edges in such a way that the cube becomes skew-commutative (\cite{Khovanov00}).
Now, we wish to assign to a skew-commutative cube a formal chain complex. To do so, one must first give a meaning to a complex over a category. 

\begin{definition}
Let $\mathbf{C}$ be a $R$-linear category. The \emph{category $\mathbf{Com}(\mathbf{C})$ of complexes over $\mathbf{C}$} is the category defined as follows
\begin{enumerate}
\item the objects of $\mathbf{Com}(\mathbf{C})$ are ordered collections of pairs $(C_{i},d_{i})_{i\in\mathbb{Z}}$ where $C_{i} \in Obj(\mathbf{C})$ and $d_{i} \in Hom_{\mathbf{C}}(C_{i},C_{i+1})$ such that
\[ d_{i+1} \circ d_{i} = 0_{Hom_{\mathbf{C}}(C_{i},C_{i+2})};\]
\item the morphisms between two objects $(C_{i},d_{i})$ and $(C^\prime_{j},d^\prime_{j})$ of $\mathbf{Com}(\mathbf{C})$ are collection of maps $(F_{i})_{i\in \mathbb{Z}}$ such that
\[ \forall\: i\in\mathbb{Z}\ :\ F_{i} \in  Hom_{\mathbf{C}}(C_{i},C^\prime_{i+k}) \quad\text{and}\quad F_{i+1} \circ d_{i} = d^\prime_{i+k} \circ F_{i},\]
for a fixed $k\in \mathbb{Z}$, called \emph{degree of $(F_{i})_{i\in\mathbb{Z}}$};
\item the composition of two morphisms $(F_{i})_{i\in \mathbb{Z}}$ and $(G_{j})_{j\in\mathbb{Z}}$ is defined as $(G_{i+k} \circ F_{i})_{i\in\mathbb{Z}}$, where $k$ is the degree of $(F_{i})_{i\in\mathbb{Z}}$.
\end{enumerate}
\end{definition}

In general, an $R$-linear category does not have kernels and co-kernels. So, even though we can define chain complexes we cannot define the homology. However, it is possible to define chain homotopy equivalences.

\begin{definition}
Two morphisms $F$ and $G$ between two objects in $\mathbf{Com}(\mathbf{C})$, say $C_{\bullet} = (C_{i},d_{i})$ and $D_{\bullet} = (D_{i},\partial_{i})$, are (\emph{chain}) \emph{homotopy equivalent} if there exists a morphism $H\in Hom_{\mathbf{Com}(\mathbf{C})}(D_{\bullet},C_{\bullet})$ such that
\[ F - G = d \circ H \pm H \circ \partial. \]
Two objects  $C_{\bullet},\ D_{\bullet}\in Obj(\mathbf{Com}(\mathbf{C}))$ are \emph{homotopy equivalent} if there exists two morphisms
\[ F \in Hom_{\mathbf{Com}(\mathbf{C})}(D_{\bullet},C_{\bullet})\quad \text{and}\quad G\in Hom_{\mathbf{Com}(\mathbf{C})}(C_{\bullet},D_{\bullet})\]
such that the compositions $F \circ G$ and $G \circ F$ are homotopy equivalent to the identity morphism of $D_\bullet$ and $C_\bullet$, respectively.
We will denote by $\mathbf{Com}_{/h}(\mathbf{C})$ the category of complexes over $\mathbf{C}$ and morphisms of $\mathbf{Com}(\mathbf{C})$ up to homotopy equivalence.
\end{definition}

To conclude the abstract construction of a complex from a skew-commutative cube we need one more definition. 

\begin{definition}
Given an $R$-linear category $\mathbf{C}$, the \emph{matrix category over $\mathbf{C}$}\index{Category!- matrix} is the category whose objects are formal direct sums of objects in $\mathbf{C}$ and whose morphisms are matrices with entries in the morphism of $\mathbf{C}$. The composition of two morphisms in the matrix category over $\textbf{C}$ is given by the usual matrix multiplication rule.
\end{definition}

Denote by $\mathbf{Kom(C)}$ the category of complexes over the matrix category over $\mathbf{C}$, where $\mathbf{C}$ is an arbitrary $R$-linear category. Given a skew-commutative $n$-cube $Q$ in $\mathbf{C}$, define
\[ C_{i}^{Q} = \bigoplus_{\vert v \vert = i} Q_{v}\quad d^{Q}_{i} = \sum_{\vert v \vert = i} \bigoplus_{v^\prime} F (v,v^\prime),\]
where $F(v,v^\prime)$ is defined to be zero if there is no edge from $v$ to $v^\prime$, and $\vert v \vert$ denotes the number of $1$'s in $v$. It is an easy verification that $(C_{i}^{Q},d_{i}^{Q})$ is an object in $\mathbf{Kom(C)}$.

\subsection{The Khovanov-Kuperberg bracket and the geometric complex}\label{subsection:homologies}

Fix a potential $\omega(x) \in R[x]$. 
Let $D$ be an oriented link diagram. Fix an order of the crossings of $D$, say $\{ c_{1} ,...,c_{k} \}$. Each crossing has two possible \emph{web resolutions}, see Figure \ref{fig:Webresolutions}.
These resolutions come with an integer depending on the crossing and the type of resolution performed.

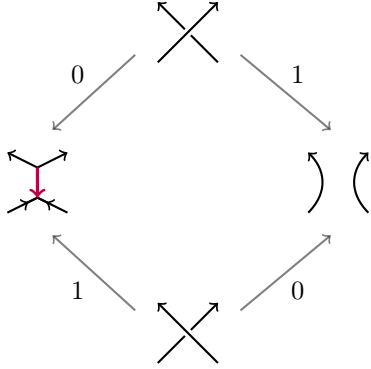
\begin{figure}[]
\centering
\begin{tikzpicture}[scale = 2]
\draw[->, thick] (2.2,-.2) .. controls +(-.125,.125) and +(-.125,-.125) .. (2.2,.2);
\draw[->, thick] (1.8,-.2) .. controls +(.125,.125) and +(.125,-.125) .. (1.8,.2);

\draw[->, thick] (0,.1) -- (-.2,.2);
\draw[->, thick] (0,.1) --(.2,.2);
\draw[thick] (0,-.1) -- (-.2,-.2);
\draw[thick, -<] (0,-.1) --(-.1,-.15);
\draw[thick]  (0,-.1) --(.2,-.2);
\draw[thick, -<] (0,-.1) --(.1,-.15);
\draw[very thick, purple, <-] (0,-.1) --(0, .1);

\draw[<-, thick] (.8, 1.2) --(1.2,.8);
\pgfsetlinewidth{8*\pgflinewidth}
\draw[white] (.8, .8) --(1.2,1.2);
\pgfsetlinewidth{.125*\pgflinewidth}
\draw[->, thick] (.8, .8) --(1.2,1.2);

\draw[<-, thick] (1.2, -.8) --(.8,-1.2);
\pgfsetlinewidth{8*\pgflinewidth}
\draw[white] (1.2, -1.2) --(.8,-.8);
\pgfsetlinewidth{.125*\pgflinewidth}
\draw[->, thick] (1.2, -1.2) --(.8,-.8);

\draw[opacity = .5,->,thick] (.65, .85) -- (.1,.35);
\draw[opacity = .5,->,thick] (1.35, .85) -- (1.95,.35);
\draw[opacity = .5,->,thick] (.65, -.85) -- (.1,-.35);
\draw[opacity = .5,->,thick] (1.35, -.85) -- (1.95,-.35);

\node[above left ] at (.375,.6) {0};
\node[below left ] at (.375,-.6) {1};
\node[above right] at (1.625,.6) {1};
\node[below right] at (1.625,-.6) {0};
\end{tikzpicture}
\caption{Web resolutions of positive (top) and negative (bottom) crossings.}
\label{fig:Webresolutions}
\end{figure}

There is a natural bijection between the vertices of a $k$-dimensional cube $[0,1]^{k}$ and the web resolutions of the diagram\footnote{That is the web obtained by replacing each crossing with a web resolutions.} $D$; to $v\in \{ 0,1\}^{n}$ is associated the web $W(D,v)$ obtained by replacing, for each $i$, the crossing $c_{i}$ with its $v_{i}$-web resolution.

To each oriented edge $v \to v^\prime$ of $Q_{k}$ is associated a foam $F(v, v^\prime)$ between the webs $W(D,v)$ and $W(D,v^\prime)$. The foam $F(v,v^\prime)$ is everywhere a cylinder, except above a disk where the two webs differ, where the cobordism looks like one of the two elementary web cobordisms depicted Figure \ref{fig:Elementarwebcob}. 
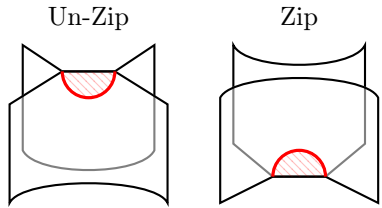
\begin{figure}[h]
\centering
\begin{tikzpicture}[scale=.35, thick]

\draw (.5,.25) -- (2,-1) -- (4,-1) -- (5.5,.25) -- (5.5,4) .. controls +(-.5,-1) and +(.5,-1) ..(.5,4) --cycle;
\draw[fill, white, opacity = .5] (0,-2) -- (2,-1) -- (4,-1) -- (6,-2) -- (6,2) .. controls +(-.5,1) and +(.5,1) ..(0,2) --cycle;
\draw (0,-2) -- (2,-1) -- (4,-1) -- (6,-2) -- (6,2) .. controls +(-.5,1) and +(.5,1) ..(0,2) --cycle;
\draw[pattern color = red, pattern= north west lines, opacity = .5] (4,-1) -- (2,-1) arc (180:0:1);
\draw[very thick,red]  (2,-1) arc (180:0:1);

\begin{scope}[shift = {+(-2,2)}, rotate = 180]
\draw (.5,-2) -- (2,-1) -- (4,-1) -- (5.5,-2) -- (5.5,2) .. controls +(-.5,1) and +(.5,1) ..(.5,2) --cycle;
\draw[fill, white, opacity = .5] (0,.25) -- (2,-1) -- (4,-1) -- (6,.25) -- (6,4) .. controls +(-.5,-1) and +(.5,-1) ..(0,4) --cycle;
\draw (0,.25) -- (2,-1) -- (4,-1) -- (6,.25) -- (6,4) .. controls +(-.5,-1) and +(.5,-1) ..(0,4) --cycle;

\draw[pattern color = red, pattern= north west lines, opacity = .5] (4,-1) -- (2,-1) arc (180:0:1);
\draw[very thick,red]  (2,-1) arc (180:0:1);

\node at (3,-3) {Un-Zip};
\end{scope}
\node at (3,5) {Zip};
\end{tikzpicture}
\caption{Elementary web cobordisms. The arc in red is a singular arc.}
\label{fig:Elementarwebcob}
\end{figure}
The cube we defined depends on the choice of an ordering of the crossings of $D$. Moreover, it is easy to see that, by a Morse-theoretic argument, the cube associated to an oriented link diagram is commutative. We can turn this commutative cube into a skew commutative cube $Q(D)$ (but there is the choice of signs for the edges). 
Finally, using the abstract construction described in Subsection \ref{subs:fromcubestcomplexes} we can associate a complex $\langle D \rangle_\omega$ in $\mathbf{Kom}(\mathbf{Foam}_{/\ell})$ to the skew commutative cube $Q(D)$.

\begin{rem*}
The complex $\langle D \rangle_{\omega}$ does not depend on the potential \emph{per se}; what really depends on $\omega$ is the category $\mathbf{Foam}_{/\ell}$.
\end{rem*}

The complex $\langle D \rangle_{\omega}$ is called the \emph{Khovanov-Kuperberg bracket of $D$} (with respect to $\omega$). Whenever $\omega$ is fixed or clear from the context we will remove it from the notation. With some standard machinery of homological algebra it is easy to show the following proposition.

\begin{proposition}[\cite{Khovanov03, Mackaayvaz07}]
The Khovanov-Kuperberg bracket of an oriented link diagram $D$ does not depend (up to isomorphism in $\mathbf{Kom}(\mathbf{Foam}_{/\ell})$) on the sign assignment or the order of the crossings used to obtain the cube $Q(D)$.\qed
\end{proposition}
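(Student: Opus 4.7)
The plan is to reduce both dependencies to a single homological-algebra statement: any two sign assignments $\varepsilon$, $\varepsilon'$ on the edges of $Q_k$ that turn the underlying commutative cube into a skew-commutative cube yield isomorphic complexes in $\mathbf{Kom}(\mathbf{Foam}_{/\ell})$. Reordering the crossings merely relabels the vertices of $Q_k$ and leaves the underlying commutative cube unchanged: its vertex-objects are the webs $W(D,v)$ and its edge-morphisms are the elementary zip and un-zip foams of Figure \ref{fig:Elementarwebcob}, both depending only on the crossings themselves rather than on the chosen order. The ordering only enters the usual Koszul sign $\varepsilon(v\to v') = (-1)^{\sum_{j<i(v,v')} v_j}$, so once sign-independence is established, order-independence follows as a special case.

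For the sign statement, consider the ratio $\eta(v\to v') := \varepsilon(v\to v')\varepsilon'(v\to v')^{-1}\in\{\pm 1\}$. On each $2$-face of $Q_k$ both $\varepsilon$ and $\varepsilon'$ impose anti-commutativity on the (same) underlying commutative square of foams, so the product of the $\eta$-values around the boundary of the face equals $+1$: in other words $\eta$ is a $1$-cocycle on the $2$-skeleton of $Q_k$ with coefficients in $\{\pm 1\}$. Since $Q_k$ is contractible we have $H^1(Q_k;\{\pm 1\})=0$, so $\eta$ is a coboundary: there exist signs $\sigma_v\in\{\pm 1\}$, $v\in\{0,1\}^k$, with $\eta(v\to v')=\sigma_v\sigma_{v'}$. (Concretely, set $\sigma_{(0,\dots,0)}=1$ and propagate using $\eta$ along any path from the origin; path-independence is guaranteed by the cocycle property.)

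Now define $\Psi$ to be the degree-zero morphism in $\mathbf{Kom}(\mathbf{Foam}_{/\ell})$ whose component at homological degree $i$ is the diagonal matrix with entry $\sigma_v\cdot\mathrm{id}_{W(D,v)}$ on each summand $W(D,v)$, $|v|=i$. The identity $\sigma_{v'}\varepsilon(v\to v') = \varepsilon'(v\to v')\sigma_v$, which follows from $\eta(v\to v')=\sigma_v\sigma_{v'}$ together with $\sigma_v^{2}=1$, is precisely the condition that $\Psi$ intertwines the two differentials; the same diagonal matrix provides the inverse. This produces the desired isomorphism between the two versions of $\langle D\rangle_\omega$, and the proposition follows.

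The principal subtlety is simply ensuring that the cocycle-coboundary argument is executed inside the correct category: the group $\{\pm 1\}$ acts on morphisms of the $R$-linear category $\mathbf{Foam}_{/\ell}$ by scalar multiplication, so the coboundary produces honest automorphisms of the vertex-objects inside the matrix category, and the entire argument lives at the level of strict isomorphism in $\mathbf{Kom}(\mathbf{Foam}_{/\ell})$, with no appeal to chain homotopy.
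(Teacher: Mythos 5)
Your proposal is correct and is essentially the standard argument that the paper itself omits and delegates to \cite{Khovanov03, Mackaayvaz07}: reduce order-independence to sign-independence, observe that the ratio of two sign assignments is a $\{\pm 1\}$-valued $1$-cocycle on the contractible cube $Q_k$, and use the resulting coboundary $\sigma$ to build a diagonal isomorphism in $\mathbf{Kom}(\mathbf{Foam}_{/\ell})$. The only point worth making explicit is that your cocycle condition requires reading ``skew-commutative'' in the usual convention that the product of the four edge signs on every $2$-face equals $-1$, rather than merely that each signed square anticommutes --- the latter is vacuous on a face whose composite foam happens to vanish in $\mathbf{Foam}_{/\ell}$, and would not force the product of the $\eta$-values around that face to be $+1$.
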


The Khovanov-Kuperberg bracket is the $\mathfrak{sl}_3$-analogue of the Khovanov bracket introduced by Bar-Natan in \cite{BarNatan05cob}. Exactly as in the case of the Khovanov bracket, to turn the Khovanov-Kuperberg bracket into an invariant of links (and not framed links) we need to shift the homological grading.

\begin{definition}
Given an $R$-linear category $\mathbf{C}$ and $C_\bullet = (C_{i},d_{i})_{i\in\mathbb{Z}}\in \mathbf{Com}(\mathbf{C})$, the \emph{shift of $C_{\bullet}$ by $k\in \mathbb{Z}$} is the object $C_{\bullet}(k)$ in $ \mathbf{Com}(\mathbf{C})$ defined as follows:
\[ C_\bullet(k) = (C_{i+k},d_{i+k})_{i\in\mathbb{Z}}.\]
\end{definition}

Finally, we can define the \emph{the geometric $\mathfrak{sl}_{3}$-complex of $D$} (with respect to $\omega$) as follows
\[\widetilde{C}_{\omega}^{\bullet}(D,R) = \langle D\rangle_{\omega}(-n_{+}(D)),\]
where $n_+(D)$ (resp. $n_{-}(D)$) is the number of  positive (resp. negative) crossings in $D$ (cf. Figure \ref{fig:Webresolutions}).
This is a link invariant in the sense of the following proposition.

\begin{theorem}(Mackaay-Vaz, \cite{Mackaayvaz07})\label{theorem:invarianceofthegeometriccomplex}
Let $\omega$ be a potential. If $D$ and $D^\prime$ are oriented link diagram representing the same link, then $\widetilde{C}_{\omega}^{\bullet}(D,R)$ and $\widetilde{C}_{\omega}^{\bullet}(D^\prime,R)$ are chain homotopy equivalent.
Furthermore, the assignment
\[ \widetilde{C}_{\omega}^{\bullet}: \mathbf{Link} \longrightarrow \mathbf{Kom}_{/\pm h}(\mathbf{Foam}_{/\ell}),\] 
is a functor from the category $\mathbf{Link}$ (i.e. the category of links in $\mathbb{R}^3$, and properly embedded surfaces in $\mathbb{R}^3 \times [0,1]$ up to boundary-fixing isotopies), to the category $\mathbf{Kom}_{/\pm h}(\mathbf{Foam}_{/\ell})$ (i.e. the category obtained from $\mathbf{Kom}_{h}(\mathbf{Foam}_{/\ell})$ by considering the morphisms up to sign).
\qed
\end{theorem}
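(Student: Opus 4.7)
The plan is to establish invariance under each of the three oriented Reidemeister moves separately, and then deduce the functoriality as a consequence of the explicit chain homotopy equivalences so constructed. The key algebraic tool throughout is the Bar-Natan style cancellation lemma: if an arrow in a bicomplex has invertible component between two summands, then those summands can be removed without changing the homotopy type. Combined with the local simplifications from Proposition \ref{proposition:Mackaayvazrel}, this reduces invariance to a finite list of local calculations in $\mathbf{Foam}_{/\ell}$.

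For Reidemeister I, the bracket of a single positive kink is a two-term complex $W_{0}\to W_{1}$, where $W_{0}$ contains a digon attached to a strand and $W_{1}$ contains the smoothed strand with a loop. The digon relation from Proposition \ref{proposition:Mackaayvazrel} splits $W_{0}$ as a direct sum of three shifted, decorated copies of a single strand; applying the sphere relation to the loop in $W_{1}$ likewise splits it. One then identifies an invertible component of the differential (up to an overall sign depending on the orientation of the kink), and cancellation contracts the two-term complex onto a single copy of the bracket of the plain strand, with the appropriate shift absorbed by the $-n_{+}(D)$ in the definition of $\widetilde{C}_{\omega}^{\bullet}$. The negative kink is handled symmetrically.

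For Reidemeister II (both coherent and anti-coherent orientations) the bracket is a four-term hypercube. The square-removal relation of Proposition \ref{proposition:Mackaayvazrel} rewrites the web at the "11"-vertex as a combination containing a summand isomorphic to the web at the "00"-vertex through an invertible foam; two successive applications of cancellation contract the hypercube onto the bracket of two parallel strands. For Reidemeister III one does not argue directly: instead, following the standard strategy, one uses R2 to slide one strand past the crossing of the other two on both sides of the move, reducing the two eight-term hypercubes to a common normal form. All the intermediate identifications are foam calculations that reduce, via the reduction and evaluation relations, to linear combinations of decorated spheres and theta foams, and hence to scalars in $R$.

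For functoriality, one assigns to births, deaths, and saddles the corresponding cup, cap, and saddle foams (viewed as chain maps in degree $0$), and to each Reidemeister move the explicit homotopy equivalence constructed above. By the Carter–Saito movie move theorem, two movies represent isotopic cobordisms iff they differ by a sequence from a finite list of movie moves; hence checking functoriality reduces to verifying that each movie move induces chain-homotopic maps, which in turn reduces to finitely many local identities in $\mathbf{Foam}_{/\ell}$ that follow from Proposition \ref{proposition:Mackaayvazrel}. The hardest step is to verify the R3 invariance and the movie moves involving R3: the eight vertices of the cube, together with their differentials, have to be tracked through several applications of square-removal before one reaches a shared normal form, and the sign ambiguities in some movie moves cannot be eliminated canonically, which is exactly what forces passage to $\mathbf{Kom}_{/\pm h}(\mathbf{Foam}_{/\ell})$ in the statement.
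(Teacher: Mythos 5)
The paper offers no proof of this theorem: it is imported from \cite{Mackaayvaz07} with only a citation, so the comparison must be with the cited source. Your strategy---delooping against the local relations plus Gaussian elimination for R1 and R2, the categorified Kauffman trick reducing R3 to R2, and Carter--Saito movie moves for functoriality, with the unremovable signs accounting for the target $\mathbf{Kom}_{/\pm h}(\mathbf{Foam}_{/\ell})$---is sound and is essentially the Morrison--Nieh style argument. Mackaay and Vaz instead exhibit explicit chain maps and homotopies for each move, passing through an auxiliary complex $Q$ for R3; these are exactly the maps reproduced in Figures \ref{fig:firstRmove}, \ref{fig:secondRmcoer} and \ref{fig:invterzasl3a} and exploited in Section 4, where it matters that the R2 and R3 equivalences restrict to $\pm$ the identity on the oriented resolutions. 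If one argues by abstract cancellation, those explicit equivalences still have to be extracted afterwards for the rest of the paper to go through. Two slips to correct in your sketch, neither fatal. First, the direct-sum decompositions you invoke are those of Proposition \ref{proposition:KhovKuprelgraded} (which follow from Proposition \ref{proposition:Mackaayvazrel}), and your R1 count is backwards: digon removal splits the digon web into \emph{two} shifted copies of the strand, while it is the loop that contributes \emph{three} copies via $T(\bigcirc)\cong R[x]/(\omega(x))$ (Example \ref{ex:Tcircle}); with ``three and three'' as written the kink complex would contract to zero rather than to a single strand. Second, for a coherent R2 the two crossings have opposite signs, so the square web sits at one of the \emph{middle} (homological degree $0$) vertices, and its two summands cancel against the single-thick-rung webs at the two extreme vertices; it is not the ``$11$''-vertex decomposing against the ``$00$''-vertex as you state.
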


\subsection{Tautological functors and the $\mathfrak{sl}_3$-homology}
The category $\mathbf{Mat}(\mathbf{Foam}_{/\ell})$ is not an Abelian category. Thus, it is not possible to define the homology of $\widetilde{C}_{\omega}^{\bullet}(D,R)$. There are different ways to turn the geometric complex into an honest chain complex. Out of the different possibilities, following \cite{Mackaayvaz07}, we pursue the approach via tautological functors.
 
\begin{definition}
The \emph{tautological functor} is the functor
\[T: \mathbf{Foam}_{/\ell} \longrightarrow R-\mathbf{Mod}\]
defined on an object $W^\prime\in Obj(\mathbf{Foam}_{/\ell})$ by
\[ T(W^\prime) = Hom_{\mathbf{Foam}_{/\ell}}(\emptyset,W^\prime)\]
and on morphisms by composition on the left, that is
\begin{align*}
T(F):\ T(&W^\prime) \longrightarrow\quad T(W^{\prime\prime})\\
& G\quad \longmapsto\quad F \circ G
\end{align*}
for each $F\in Hom_{\mathbf{Foam}_{/\ell}}(W^\prime,W^{\prime\prime})$ and $W^\prime,\: W^{\prime\prime}\in Obj(\mathbf{Foam}_{/\ell})$.
\end{definition} 

Note that, if we have a disjoint union of the webs $W^\prime$ and $W^{\prime\prime}$, then
\[T (W^\prime \sqcup W^{\prime\prime}) \simeq T (W^\prime) \otimes_{R} T(W^{\prime\prime})\]
as $R$-modules. Before proceeding further let us show in an example how the functor $T$ works. This example will be useful later on. 

\begin{example}\label{ex:Tcircle}
Let us compute $T(\bigcirc)$ (i.e.\: find its isomorphism class as an $R$-module).

By definition $ T(W^\prime) = Hom_{\mathbf{Foam}_{/\ell}}(\emptyset,W^{\prime})$ is the $R$-module generated by all foams bounding $W^\prime$ (modulo local relations).
All closed components of such a foam evaluate to elements of $R$. Thus, $T(\bigcirc)$ is generated (as $R$-module) by connected foams. 
Since these foams must bound the circle $\bigcirc$, which is made of regular boundary points, we can use the genus reduction relation and write any connected foam bounding  $\bigcirc$ as an $R$-linear combinations of disks marked with at most two dots. It follows that $T(\bigcirc )$ is generated by dotted disks.
Now, consider the epimorphism of $R$-modules
\[ \Phi : R[x] \longrightarrow Hom_{\mathbf{Foam}_{/\ell}}\left(\emptyset,\bigcirc\right)\]
mapping $x^k$ to the disk with $k$-dots. 
The dot reduction relation tells us that $\omega (x)$ is in the kernel. On the other hand, the disk with two dots, the disk with a single dot and the disk with no dots are linearly independent over $R$ (use the pairing to get a linear system). Finally, an easy application of the Euclidean division algorithm (which works in any ring, assuming that the polynomial by which we are dividing has invertible leading term, see \cite[Theorem 1.1 Section IV]{Lang05}) shows that $Ker(\Phi)$ is exactly $(\omega(x))$, and thus
\[ T(\bigcirc) \simeq \frac{R[x]}{(\omega (x))}\]
\end{example}

There is a natural way to extend the tautological functor to the category $\mathbf{Kom}(\mathbf{Foam}_{/\ell})$ (cf. \cite[Section 9]{BarNatan05cob}). With an abuse of notation we shall denote the extended functor also by $T$.

\begin{definition}
The \emph{$\mathfrak{sl}_{3}$-complex} (with respect to $\omega$) of an oriented link diagram $D$ is
\[ C_{\omega}^{\bullet}(D,R) = T\left(\widetilde{C}_{\omega}^{\bullet}(D,R) \right)\in Obj(\mathbf{Kom}(R-\mathbf{Mod})).\]
\end{definition}

The following proposition is an immediate consequence of Theorem \ref{theorem:invarianceofthegeometriccomplex}.

\begin{proposition}
The isomorphism class (as $R$-module) of $H_{\omega}^\bullet(D,R)$ is a link invariant, so we may write $H_{\omega}^\bullet(L,R)$ where $L$ is the oriented link represented by $D$.
Moreover, $H_{\omega}^{\bullet}$ defines a functor between the category $\mathbf{Link}$ and the category $R$-$\mathbf{Mod}_{gr}$ of graded $R$-modules.
\qed
\end{proposition}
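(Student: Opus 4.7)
The plan is to lift Theorem \ref{theorem:invarianceofthegeometriccomplex} through the tautological functor $T$, which has already been extended to a functor on $\mathbf{Kom}(\mathbf{Foam}_{/\ell})$ (by acting term-wise on chain objects and differentials, and extending to the matrix category by sending formal direct sums to direct sums of $R$-modules and matrices of foams to matrices of $R$-linear maps). The key observation is that any $R$-linear functor between $R$-linear categories automatically preserves the categorical notion of chain homotopy equivalence: if $F,G\colon C_\bullet\to D_\bullet$ are chain maps with $F-G = d\circ H \pm H\circ\partial$, then applying $T$ yields
\[
T(F)-T(G) = T(d)\circ T(H) \pm T(H)\circ T(\partial),
\]
so $T(F)$ and $T(G)$ are chain homotopic as maps of $R$-modules. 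In particular, $T$ descends to a functor $\mathbf{Kom}_{/h}(\mathbf{Foam}_{/\ell})\to\mathbf{Kom}_{/h}(R\text{-}\mathbf{Mod})$ and, since a sign is a particular homotopy (via $H=0$) up to a global scalar, also to $\mathbf{Kom}_{/\pm h}(\mathbf{Foam}_{/\ell})\to\mathbf{Kom}_{/\pm h}(R\text{-}\mathbf{Mod})$.

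With this machinery in place the \emph{invariance} statement is immediate. If $D$ and $D'$ are oriented diagrams of the same link $L$, Theorem \ref{theorem:invarianceofthegeometriccomplex} gives a chain homotopy equivalence $\widetilde{C}_\omega^\bullet(D,R)\simeq \widetilde{C}_\omega^\bullet(D',R)$ in $\mathbf{Kom}_{/\pm h}(\mathbf{Foam}_{/\ell})$. Applying $T$, we get a chain homotopy equivalence $C_\omega^\bullet(D,R)\simeq C_\omega^\bullet(D',R)$ in $\mathbf{Kom}_{/h}(R\text{-}\mathbf{Mod})$, and homotopy equivalent chain complexes of $R$-modules have isomorphic homology in every degree. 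Hence $H_\omega^\bullet(D,R)\cong H_\omega^\bullet(D',R)$ as graded $R$-modules, justifying the notation $H_\omega^\bullet(L,R)$.

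For \emph{functoriality}, I would proceed in the standard way: given a link cobordism $S\colon L_0\to L_1$, Theorem \ref{theorem:invarianceofthegeometriccomplex} provides a morphism $\widetilde{C}_\omega^\bullet(S)\in\mathrm{Hom}_{\mathbf{Kom}_{/\pm h}(\mathbf{Foam}_{/\ell})}(\widetilde{C}_\omega^\bullet(D_0,R),\widetilde{C}_\omega^\bullet(D_1,R))$, well-defined up to sign and chain homotopy. Applying $T$ yields a chain map $C_\omega^\bullet(S)$, again well-defined up to sign and homotopy; passing to homology kills the homotopy ambiguity and leaves a graded $R$-module map $H_\omega^\bullet(S)\colon H_\omega^\bullet(L_0,R)\to H_\omega^\bullet(L_1,R)$ defined up to sign. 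Composition of cobordisms corresponds to composition of chain maps under $T$ (because $T$ is a functor), so $H_\omega^\bullet(S'\circ S)=\pm H_\omega^\bullet(S')\circ H_\omega^\bullet(S)$, and identity cobordisms give identity maps. This gives the promised functor $\mathbf{Link}\to R\text{-}\mathbf{Mod}_{gr}$ (understood up to sign, as with the geometric complex).

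The only real subtlety is the sign ambiguity inherited from $\mathbf{Kom}_{/\pm h}(\mathbf{Foam}_{/\ell})$; everything else is a formal consequence of the fact that $T$ is an $R$-linear functor and that homology is invariant under chain homotopy equivalence of complexes of $R$-modules.
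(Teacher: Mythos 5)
Your proposal is correct and follows exactly the route the paper intends: the paper declares the proposition an immediate consequence of Theorem \ref{theorem:invarianceofthegeometriccomplex}, and your argument simply makes explicit why --- the $R$-linear tautological functor $T$ preserves chain homotopy equivalences, and homotopy equivalent complexes of $R$-modules have isomorphic homology, with functoriality (up to sign) inherited in the same way. The only quibble is your aside that ``a sign is a particular homotopy via $H=0$''; $F$ and $-F$ need not be homotopic, but this does not matter since $\mathbf{Kom}_{/\pm h}$ already quotients morphisms by sign and $T(-F)=-T(F)$, so the descent you claim still holds.
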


The homology of the $\mathfrak{sl}_3$-complex will be called \emph{$\mathfrak{sl}_{3}$-homology of $L$} (with respect to $\omega$).

\subsection{Graded and filtered $\mathfrak{sl}_3$-homologies}\label{sec:grading}

To conclude the background material, we wish to describe how to define a second grading or a filtration on the $\mathfrak{sl}_3$-complex. Suppose that $R$ is a graded ring, for the sake of simplicity we shall assume $R$ to be graded over the non-negative integers. By setting $deg(x) = 2$, the grading on $R$ induces a grading on $R[x]$. The choice of an homogenous potential $\omega(x)$ (i.e. $deg(a_{i}) = 2(3 - i)$) induces a graded structure on the category $\mathbf{Foam}_{/\ell}$ (see \cite[Definition 6.1]{BarNatan05cob} and \cite[Section 2]{Mackaayvaz07}); that is the modules $ Hom_{\mathbf{Foam}_{/\ell}}(W^\prime, W)$ (and thus also $T(W)$) become graded $R$-modules. This structure is defined by setting
\[deg_{\mathbf{Foam}}(F) = -2\chi(F)  + \chi(\partial F) + 2 d,\]
for each foam $F$ with $d$ dots.

In particular, it follows that the $\mathfrak{sl}_{3}$-complex associated to an oriented diagram $D$ and a potential $\omega$ is graded. Furthermore, the differential respects the \emph{quantum degree} $qdeg$, which is defined as follows
\[ qdeg(x) = deg_{\mathbf{Foam}}(x) - \vert v \vert + 3n_{+}(D) - 2n_{-}(D), \]
where $x$ is an homogeneous element of $T(W(D,v))$, and $v$ a vertex of the cube $Q_{n_{+}+n_{-}}$.

Now, suppose that $R$ is trivially graded (i.e. supported in degree $0$). In this case, the unique homogeneous potential is $x^{3}$ (which corresponds to the original theory due to Khovanov). For all the other potentials the quantum grading is not well defined (because the reduction relations are not homogenous). Nonetheless, one may define a filtered structure on $ Hom_{\mathbf{Foam}_{/\ell}}(W^\prime, W)$ by setting 
\[\mathscr{F}_{i} Hom_{\mathbf{Foam}_{/\ell}}(W^\prime, W) = \langle\: F,\ deg_{\mathbf{Foam}}(F) \leq i \rangle_{R},\]
where $deg_{\mathbf{Foam}}$ is defined as above.
This filtered structure extends to $\mathbf{Kom}(\mathbf{Foam}_{/\ell})$, and also to the $\mathfrak{sl}_3$-complex, and the differential does not increase the filtration level (shifted as in the case of the quantum degree). This (shifted) filtration on the $\mathfrak{sl}_3$-complex is called \emph{quantum filtration}.

\begin{rem*}\label{rem:gradingcircle}%
One can prove that the isomorphism in Example \ref{ex:Tcircle} induces the following isomorphism of graded (resp. filtered) $R$-modules
\[ T\left( \bigcirc \right) \simeq \frac{R[x]}{(\omega(x))}(-2),\]
where $deg(x) = 2$, and in the filtered case the filtration on the left-hand side is induced by the degree.%
\end{rem*}

To conclude this section we state the following result, due to Mackaay and Vaz (\cite[Lemma 2.9]{Mackaayvaz07}, compare also with \cite{Khovanov03})

\begin{proposition}(Khovanov-Kuperberg relations)\label{proposition:KhovKuprelgraded}
We have the following isomorphisms of graded (filtered) $R$-modules.
\begin{itemize}
\item[] $\text{(circle removal)}\qquad\ T(W^\prime \sqcup \bigcirc) \simeq T(\bigcirc) \otimes T(W^\prime)$
\item[] $\text{(digon removal)}\qquad\ T(W_1 ) \simeq T(W_2)(-1) \oplus T(W_2)(1)$
\item[] $\text{(square removal)}\qquad T(W^\prime_1) \simeq T(W^\prime_2) \oplus T(W^\prime_3)$
\end{itemize}
where $W_1$ and $W_2$ (resp. $W^\prime_1$, $W^\prime_2$ and $W^\prime_3$) are two (resp. three) webs which are identical but in a small ball where they are as depicted in Figure \ref{fig:KhovanovKuperbergrel}, and $(\cdot)$ indicates the degree (filtration) shift.\qed
\end{proposition}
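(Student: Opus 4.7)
The plan is to prove each of the three isomorphisms by exhibiting explicit foams realising the maps, verifying they are mutually inverse by direct computation with the local relations (S), ($\Theta$), (GR), (DR) and the dot-permutation relations (DP1)--(DP3) of Proposition \ref{proposition:Mackaayvazrel}, and finally reading off the degree shifts from $\deg_{\mathbf{Foam}}(F) = -2\chi(F) + \chi(\partial F) + 2d$. The \emph{circle removal} is the easiest: the natural candidate is the tensor product map $G' \otimes G \mapsto G' \sqcup G$ from $T(W') \otimes T(\bigcirc)$ to $T(W' \sqcup \bigcirc)$, and surjectivity follows because any foam bounding $W' \sqcup \bigcirc$ can be reduced via (GR) by pinching off any tubes connecting the two components, leaving a sum of split foams. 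The filtered/graded statement is immediate since $\chi$, $\chi(\partial \cdot)$ and the dot count are all additive under disjoint union.

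For \emph{digon removal}, let $W_1$ contain a digon made of two thick edges joined by two thin arcs and let $W_2$ be the web obtained by replacing the digon with a single thin edge. I will define an ``unzip'' foam $\phi_0 \colon W_2 \to W_1$ opening the thin edge into the digon, and a decorated variant $\phi_1$ obtained by placing a dot on a distinguished thin facet; dually, the ``zip'' foams $\psi_0, \psi_1 \colon W_1 \to W_2$ are defined, with $\psi_0$ carrying the dot. Each composite $\psi_i \circ \phi_j$ reduces, inside a small ball, to a theta foam glued to a cylinder over $W_2$, which ($\Theta$) evaluates to $\delta_{ij}\,\mathrm{id}_{W_2}$. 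The completeness identity $\phi_0\psi_0 + \phi_1\psi_1 = \mathrm{id}_{W_1}$ then follows by applying (DP1) to the bigon region of the self-composite. The formula for $\deg_{\mathbf{Foam}}$ gives $\deg(\phi_0) = \deg(\psi_1) = -1$ and $\deg(\phi_1) = \deg(\psi_0) = +1$, matching the claimed shift $(-1) \oplus (+1)$.

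For \emph{square removal}, $W_1'$ contains a square with alternating thick and thin edges and $W_2', W_3'$ are its two resolutions; I write down dotless zip-like foams $\phi^{(k)} \colon W_k' \to W_1'$ and $\psi^{(k)} \colon W_1' \to W_k'$ for $k = 2, 3$. The four products $\psi^{(i)} \phi^{(j)}$ collapse via (S), ($\Theta$) and (GR) to $\delta_{ij}\,\mathrm{id}_{W_k'}$, while completeness $\phi^{(2)}\psi^{(2)} + \phi^{(3)}\psi^{(3)} = \mathrm{id}_{W_1'}$ is forced by (DP2) applied to the closed foam obtained by capping off the square; all foams involved have $\deg_{\mathbf{Foam}} = 0$. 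In the filtered setting the same foams are filtration-preserving sections, so the filtered statement follows from the graded one. The main obstacle is tracking the placement of the dots and the signs arising from the cyclic orders at the singular arcs so that the sums in the completeness relations close up to the identity; this is exactly where the dot-permutation relations (DP1)--(DP3) do the essential work, and once that bookkeeping is settled the remaining degree computations are routine.
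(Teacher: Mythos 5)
The paper offers no proof of this proposition: it is stated with a \qed{} and attributed directly to Mackaay--Vaz (\cite[Lemma 2.9]{Mackaayvaz07}), so there is no internal argument to compare against. Your sketch reconstructs the standard proof from that source --- explicit zip/unzip foams decorated with dots, orthogonality of the composites via the sphere and theta evaluations, completeness via local relations, and the shifts read off from $deg_{\mathbf{Foam}}$ --- and the outline is correct. Two points need tightening before this is a proof. First, the completeness identities $\phi_0\psi_0+\phi_1\psi_1=\mathrm{id}_{W_1}$ and $\phi^{(2)}\psi^{(2)}+\phi^{(3)}\psi^{(3)}=\mathrm{id}_{W_1'}$ do not follow from (DP1) and (DP2) as you claim: those relations move dots across a singular arc, whereas what is needed here are the derived digon and square neck-cutting relations. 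Since morphisms in $\mathbf{Foam}_{/\ell}$ are by definition identified exactly when their bilinear pairings $\langle\cdot\vert\cdot\rangle_F$ agree, the clean way to establish completeness is to cap off both sides with arbitrary foams and check that the resulting closed foams evaluate identically using (GR), (DR), (S) and ($\Theta$); that is how Mackaay and Vaz proceed. Second, for circle removal you argue surjectivity of $G'\otimes G\mapsto G'\sqcup G$ via (GR) but never address injectivity, which again requires the pairing (or the explicit identification $T(\bigcirc)\simeq R[x]/(\omega(x))$ of Example \ref{ex:Tcircle}) to see that no nontrivial linear combination of split foams is killed. With those two repairs the argument closes up and coincides with the cited one; your degree computations for the shifts $(-1)\oplus(1)$ and for the degree-$0$ square foams are correct as stated.
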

\begin{figure}[H]
\centering
\begin{tikzpicture}[scale = .5]
\begin{scope}[shift = {+(1.75,2)}]
\draw[thick,->] (1.5,0) -- (2,0);
\draw[thick] (2,0) .. controls +(.25,.25) and +(-.25,0) .. (3,.45);
\draw[thick,<-] (3,.45) .. controls +(.25,0) and +(-.25,.25) .. (4,0);
\draw[thick] (2,0) .. controls +(.25,-.25) and +(-.25,0) .. (3,-.45);
\draw[thick,<-] (3,-.45) .. controls +(.25,0) and +(-.25,-.25) .. (4,0);
\draw[thick,->] (4,0) -- (4.5,0);
\end{scope}

\draw[thick,->] (7.5,2) -- (10.5,2);
\draw[dashed] (9,2) circle (1.5);
\node at (9,0) {$W_{2}$};
\draw[dashed] (4.75,2) circle (1.5);
\node at (4.75,0) {$W_{1}$};

\draw[thick,->] (2,-1) -- (2.5,-1.5);
\draw[thick,->] (3.5,-1.5) -- (4,-1);
\draw[thick,<-] (2,-3) -- (2.5,-2.5);
\draw[thick,<-] (3.5,-2.5) -- (4,-3);
\draw[thick,->] (3.5,-1.5)-- (3,-1.5);
\draw[thick] (3,-1.5) -- (2.5,-1.5);
\draw[thick] (3.5,-2) -- (3.5,-2.5);
\draw[thick,->] (3.5,-1.5) -- (3.5,-2);
\draw[thick,<-] (2.5,-2) -- (2.5,-2.5);
\draw[thick] (2.5,-1.5) -- (2.5,-2);
\draw[thick,<-] (3,-2.5) -- (2.5,-2.5);
\draw[thick] (3.5,-2.5) -- (3,-2.5);
\draw[dashed] (3,-2) circle (1.5);

\draw[thick,<-] (5.75,-3) .. controls +(.5,.5) and +(.5,-.5) .. (5.75,-1);
\draw[thick,->] (7.75,-3) .. controls +(-.5,.5) and +(-.5,-.5) .. (7.75,-1);
\draw[dashed] (6.75,-2) circle (1.5);

\draw[thick,->] (9.5,-1) .. controls +(.5,-.5) and +(-.5,-.5) .. (11.5,-1);
\draw[thick,<-] (9.5,-3) .. controls +(.5,.5) and +(-.5,.5) .. (11.5,-3);
\draw[dashed] (10.5,-2) circle (1.5);
\node at (3,-4.25) {$W^\prime_{1}$};
\node at (6.75,-4.25) {$W^\prime_{2}$};
\node at (10.5,-4.25) {$W^\prime_{3}$};
\end{tikzpicture}
\caption{Webs involved in the Khovanov-Kuperberg relations.}
\label{fig:KhovanovKuperbergrel}
\end{figure}
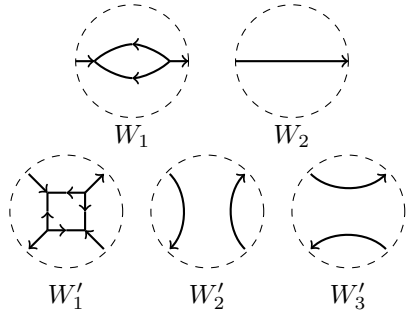

\section{Transverse invariants and the universal $\mathfrak{sl}_3$-theory}

Let $R$ be an integral domain, and fix a potential $\omega (x)\in R[x]$. In this section we define a family of transverse braid invariants in $C_{\omega}^{\bullet}(\overline{B},R)$, where $B$ is a closed braid diagram and the overline denotes the mirror. The elements of this family are in bijection with the (distinct) roots of $\omega$ in $R$. From now on, unless otherwise stated, all tensor products are assumed to be taken over $R$ and all the isomorphisms are assumed to be isomorphisms of $R$-modules.
\subsection{The $\beta$-chains}
Assume that $\omega(x)$ has a root $x_1$ in $R$. It follows that
\[\omega(x) = x^3 + a_2 x^2 + a_1 x + a_0 = (x- x_1)(x^2 + a_1^\prime x + a_0^\prime),\]
for some $a_1^\prime$, $a_{0}^\prime \in R$ such that
\begin{equation}
 a_2 = a_1^\prime - x_1 \qquad a_1 = a_0^\prime - x_1 a_1^\prime \qquad a_{0} = -x_1 a_{0}^\prime .
\label{eq:relationcoeff}
\end{equation}
Let $D$ be an oriented link diagram. Define the \emph{oriented web resolution} $\underline{w}_{D}$ to be the web resolution where each positive crossing is replaced by its $1$-web resolution, and every negative crossing is replaced by its $0$-resolution. In other words, the oriented web resolution is the web resolution where both \ocrosstextp and \ocrosstextn are replaced by \orisplittext . It follows that the oriented web resolution is a collection of loops. Moreover, these loops have a natural orientation induced by the orientation of $D$. 

\begin{definition}
Let $D$ be a oriented link diagram.
Consider a family of disjoint unknotted disks $\{ \mathbb{D}_{\gamma}\}_{\gamma\in \underline{w}_{D}}$ properly embedded in $(\mathbb{R}^{2}\times \{0\})\times [0,1] \subseteq \mathbb{R}^3 \times [0,1]$, obtained by pushing the Jordan disks bounding $\underline{w}_{D}$ in $\mathbb{R}^2\times \{ 0 \}$. Denote by $\mathbb{D}^{k}_{\gamma}$ the disk $\mathbb{D}_{\gamma}$ with $k$ dots on it.
The \emph{$\beta$-chain (with respect to $\omega$) associated to the root $x_{1}$} is defined as follows
\[ \beta_{\omega,x_1}(D)= \sum_{S \subseteq \underline{w}_{D}} \sum_{S^\prime \subseteq \underline{w}_{D}\setminus S}  (a_{1}^{\prime})^{\# S^\prime }(a_{0}^{\prime})^{\# S }\left(\bigsqcup_{\gamma\in \underline{w}_{D}\setminus( S \cup S^\prime )} \mathbb{D}_{\gamma}^{2} \sqcup \bigsqcup_{\gamma\in S^\prime} \mathbb{D}_{\gamma}^{1} \sqcup \bigsqcup_{\gamma\in S} \mathbb{D}_{\gamma}^{0}\right)\in T(\underline{w}_{D}),\]
where $\# S$ denotes the number of elements in $S$.
\end{definition}

By definition of the $\mathfrak{sl}_3$-complex, to each web resolution $\underline{w}$ of $D$ corresponds a direct summand $T(\underline{w})$ in $C_{\omega}^{\bullet}(D,R)$ in homological degree $ - n_{+}(D) + \vert \underline{w} \vert$ where $n_{+}(D)$ is the number of positive crossings in $D$, and $\vert \underline{w} \vert $ is the number of $1$-web resolutions in the web resolution $\underline{w}$. In particular, we have
\[ T(\underline{w}_{D}) \subseteq C_{\omega}^{0}(D,R).\]
Furthermore, the filtered quantum degree of $\beta_{\omega,x_{1}}$ can be easily computed. It suffices\footnote{We are using the fact that the disks with dots are a filtered basis for $T(\bigcirc \cdots \bigcirc)$. This follows immediately from the observation that the isomorphism (cf. Equation \eqref{eq:algdefofb3})
\[T(\underbrace{\bigcirc \cdots \bigcirc}_{k}) \simeq \frac{R[x_{1},...,x_{k}]}{(\omega(x_{1}), \dots , \omega(x_{k}))}(-2k),\]
given by sending the disk with $d$ dots bounding the $i$-th circle into $x_{i}^{d}$, is an isomorphism of filtered modules. Here the filtration on $R[x_{1},...,x_{k}]/(\omega(x_{1}), \dots , \omega(x_{k}))$ is the one induced by the total grading, $deg(x_{i})=2$ for each $i$, and $(-2k)$ denotes a shift of $-2k$ in the filtered degree (cf. Remark \ref{rem:gradingcircle})}. to look at the maximal quantum degree of the summands in the definition of $\beta_{\omega,x_{1}}$. The maximal degree is achieved by the summand where the disks have two dots each. Thus, the filtered quantum degree of $\beta_{\omega,x_{1}}(D)$ is $2 \# \text{disks in } \underline{w}_{D} + 2 w(D)$. In the special case $D = \widehat{B}$, we have that 
\[2 \# \text{disks in } \underline{w}_{\widehat{B}} + 2 w(B)  = 2( b(B)- w (\overline{B})) = -2sl(\overline{B}),\]
where the last equality is due to the fact that $b(B)= b(\overline{B})$.
The same computation works in the graded case.

From the Khovanov-Kuperberg relations and from Example \ref{ex:Tcircle}, it follows that, as $R$-modules,
\begin{equation}
 T(\underline{w}_{D}) \simeq \bigotimes_{\gamma \in \underline{w}_{D}} \frac{R[x_{\gamma}]}{(\omega (x_\gamma))} \simeq \frac{ R[x_\gamma\:\vert\:\gamma\in\underline{w}_{D}]}{\left(\omega (x_{\gamma})\right)_{\gamma\in\underline{w}_{D}}}\label{eq:algdefofb3}
\end{equation}
where $\gamma\in \underline{w}_{D}$ should be read as ''$\gamma$ is a circle in $\underline{w}_{D}$''. It is easy to see that the isomorphism in \eqref{eq:algdefofb3} maps $\beta_{\omega,x_1}(D)$ to
\[ \prod_{\gamma\in \underline{w}_{D}} [x_{\gamma}^{2} + a_{1}^\prime x_{\gamma} + a_{0}^{\prime}]\in T(\underline{w}_{D}) \subseteq C_{\omega}^{0}(D,R).\]
In the rest of the paper we shall freely switch between these two representations of the chains $\beta_{\omega,x_1}(D)$.

\begin{rem*}
The multiplication of $\beta_{\omega,x_1}(D)$ by $x_{\gamma}$, which is an algebraic operation, corresponds ``geometrically`` to adding a dot to the disk $\mathbb{D}_{\gamma}$ in each summand of the ``geometric expression'' of $\beta_{\omega,x_1}(D)$.
\end{rem*}

\begin{figure}[H]
\centering
\begin{tikzpicture}[scale =.25, thick]
\draw[dashed] (6,0) arc (0:180:6 and 1);
\draw (6,0) arc (0:-180:6 and 1);

\draw (-4,8) circle (2 and .333);
\draw (4,8) circle (2 and .333);

\draw[pattern = north west lines, pattern color =red, dashed, opacity =.8] (-1.55291427061512,0.965925826289068) .. controls +(0,3) and +(-.5,0) .. (0,5) .. controls +(.75,0) and +(0,4.5) .. (1.55291427061512,-0.965925826289068) --cycle;

\draw[white] (-1.55291427061512,0.965925826289068) .. controls +(0,3) and +(-.5,0) .. (0,5) .. controls +(.75,0) and +(0,4.5) .. (1.55291427061512,-0.965925826289068) --cycle;
\draw[thick, dashed ,red] (1.55291427061512,-0.965925826289068) -- (-1.55291427061512,0.965925826289068) .. controls +(0,3) and +(-.5,0) .. (0,5);
 \draw[thick,red] (0,5) .. controls +(.75,0) and +(0,4.5) .. (1.55291427061512,-0.965925826289068) ;
\draw (2,8) .. controls +(0,-1) and +(1,0) .. (0,5) .. controls +(-1,0) and +(0,-1) .. (-2,8);
\draw (-6,8) -- (-6,0);
\draw (6,8) -- (6,0);
\end{tikzpicture}
\caption{The foam $F$.}\label{fig:foamlemma}
\end{figure}
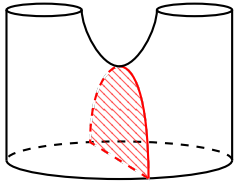
\begin{lemma}\label{lemma:dbetavanish}
Let $F$ be the foam in Figure \ref{fig:foamlemma}, and consider the morphism of $R$-modules
\[T(F): T(\bigcirc \sqcup \bigcirc ) \longrightarrow T(W_\theta),\]
where $W_{\theta}$ is the theta web (i.e. the closure of the web $W_{1}$ in Figure \ref{fig:KhovanovKuperbergrel}).
Define $\beta $ as follows
\[ \beta = (x^{2} + a_{1}^\prime x + a_0^\prime) (y^{2} + a_{1}^\prime y + a_0^\prime) \in \frac{R[x,y]}{(\omega(y),\omega(x))} \simeq T(\bigcirc \sqcup \bigcirc),\]
we have
\[T(F)(\beta) = 0. \]
\end{lemma}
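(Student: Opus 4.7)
My plan is to prove the lemma by a direct evaluation using the bilinear pairing implicit in the definition of $\mathbf{Foam}_{/\ell}$. By non-degeneracy of this pairing, showing $T(F)(\beta) = 0$ in $T(W_\theta)$ is equivalent to showing $\langle T(F)(\beta), G\rangle = 0$ for every $G \in \mathrm{Hom}_{\mathbf{Foam}_{/\ell}}(W_\theta, \emptyset)$. The digon removal isomorphism of Proposition \ref{proposition:KhovKuprelgraded} gives $T(W_\theta) \cong (R[t]/(\omega))(-1) \oplus (R[t]/(\omega))(1)$, so it suffices to check a finite spanning set of caps $G$; the natural choice is the family of theta cobordisms closing $W_\theta$, decorated by dot configurations $(k_1, k_2, k_3)$ with $k_i \leq 2$ on the three regions around the resulting singular circle.

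For each such $G$, the closed foam $G \circ F$ decorated by $\beta$ on top reduces, by the topological structure of $F$ (three regions meeting along a singular arc, with two regions adjacent to the top circles), to a theta foam with dot configuration $(i + k_1,\, j + k_2,\, k_3)$, summed over $(i,j) \in \{0,1,2\}^2$ with coefficient $c_i c_j$ where $c_0 = a_0'$, $c_1 = a_1'$, $c_2 = 1$ come from the expansion of $\beta = (x^2 + a_1' x + a_0')(y^2 + a_1' y + a_0')$. Applying the dot reduction relation $(DR)$ to any dot count exceeding $2$ and then the theta foam relation $(\Theta)$ yields an expression in $a_\bullet$, $a_\bullet'$, and $x_1$; one then verifies case-by-case that this expression vanishes after substituting the factorization identities $a_2 = a_1' - x_1$, $a_1 = a_0' - x_1 a_1'$, $a_0 = -x_1 a_0'$ from \eqref{eq:relationcoeff}.

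Two observations streamline the case analysis. First, the extension $\widetilde{\Theta}$ of $(\Theta)$ to unrestricted dot counts (via $(DR)$) remains antisymmetric under the swap of its first two arguments, since $\Theta(1,2,0) = -\Theta(2,1,0)$ and $(DR)$ acts on each region independently; combined with the symmetry $c_i c_j = c_j c_i$ in $\beta$, this immediately gives $\langle T(F)(\beta), G\rangle = 0$ whenever $k_1 = k_2$. Second, for the asymmetric cases (e.g.\ $(k_1,k_2,k_3) = (1,0,0)$), a direct computation using reductions such as $\widetilde{\Theta}(3,j,0) = -a_2\widetilde{\Theta}(2,j,0) - a_1\widetilde{\Theta}(1,j,0) - a_0\widetilde{\Theta}(0,j,0)$ collapses the sum via the factorization relations; for instance, the $(1,0,0)$ case gives $a_0' - (a_1')^2 + a_2 a_1' - a_1$, which equals zero after substitution.

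The main obstacle is organizing the finite case analysis cleanly and justifying that the listed family of dotted theta caps indeed spans $\mathrm{Hom}_{\mathbf{Foam}_{/\ell}}(W_\theta, \emptyset)$; this latter step requires invoking the digon removal isomorphism to reduce an arbitrary $G$ to a combination of dotted theta caps modulo the local relations of $\mathbf{Foam}_{/\ell}$. Once this framework is in place, the vanishing is a routine verification in each of finitely many cases.
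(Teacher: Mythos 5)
Your argument is correct in outline, but it takes a genuinely different route from the paper's. The paper never invokes the pairing or any spanning set of caps: it writes $T(F)(\beta)$ directly as a polynomial in the dotted foams $A^{r}B^{s}C^{t}$ bounding $W_{\theta}$, uses the dot-permutation relations (DP1)--(DP3) of Proposition \ref{proposition:Mackaayvazrel} to factor this expression as $B\,(A^{3}+a_{2}A^{2}+a_{1}A+a_{0})(A-a_{1}^{\prime}+1)$, and then kills every resulting term by a single application of (DR) --- no case analysis and no computation of $T(W_{\theta})$ or of its dual is required. You instead dualise: you test $T(F)(\beta)$ against caps, observe that each composite closes up to a theta foam with dot vector $(i+k_{1},\,j+k_{2},\,k_{3})$, and evaluate with (DR) and $(\mathrm{\Theta})$ alone. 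This is more elementary in the sense that it appeals only to the defining relations and to the definition of equality in $\mathbf{Foam}_{/\ell}$ via the pairing; your antisymmetry observation is valid (the multilinear extension of $\mathrm{\Theta}$ inherits antisymmetry in the first two slots because (DR) acts on each region independently) and correctly reduces the $27$ cases to $9$, and your sample computation $a_{0}^{\prime}-(a_{1}^{\prime})^{2}+a_{2}a_{1}^{\prime}-a_{1}=0$ checks out, as does, e.g., the $(2,0,0)$ case. What you pay for this is exactly the step you flag: you must prove that the dotted theta caps span $Hom_{\mathbf{Foam}_{/\ell}}(W_{\theta},\emptyset)$. That claim is true, but establishing it cleanly requires the tube-cutting identities of Proposition \ref{proposition:Mackaayvazrel} (or equivalently a foam-level realisation of digon removal) to cut an arbitrary cap down to dotted theta caps --- so your route does not actually avoid Proposition \ref{proposition:Mackaayvazrel}, it merely uses a different consequence of it, and trades the paper's one-line factorisation for a finite verification. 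With that spanning step written out, your proof is complete.
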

\begin{proof}
Our aim is to prove that $T(F)(\beta)$ can be written as follows
\[ T(F)(\beta) = \sum_{j=1}^{k} c_j(F_{j}^{\prime\prime\prime} + a_2 F_{j}^{\prime\prime} + a_1 F_{j}^\prime + a_{0}F_{j}),\quad c_j \in R \]
where $F_{j},\: F_{j}^\prime,\: F_{j}^{\prime\prime},\: F_{j}^{\prime\prime\prime}$ are of the form shown in Figure \ref{fig:prefoam}, and are identical except in a small region where they differ as shown in Figure \ref{fig:F_j}.
\begin{figure}[H]
\centering
\begin{tikzpicture}[scale = .25]

\draw[dashed] (0,7.5) circle (2);
\draw[dashed] (7,7.5) circle (2);
\draw[dashed] (14,7.5) circle (2);
\draw[dashed] (21,7.5) circle (2);

\node at (21,3.5)  {$F_{j}$};
\node at (14,3.5)  {$F_{j}^{\prime}$};
\node at (7,3.5)  {$F_{j}^{\prime\prime}$};
\node at (0,3.5)  {$F_j^{\prime\prime\prime}$};

\draw[fill] (7.5,7.5) circle (0.15) ;
\draw[fill] (6.5,7.5) circle (0.15) ;
\draw[fill] (0,7.5) circle (0.15) ;
\draw[fill] (0.75,7.5) circle (0.15) ;
\draw[fill] (-.75,7.5) circle (0.15) ;
\draw[fill] (14,7.5) circle (0.15) ;
\end{tikzpicture}
\caption{The local difference between the foams $F_{j},\: F_{j}^\prime,\: F_{j}^{\prime\prime},\: F_{j}^{\prime\prime\prime}$.}
\label{fig:F_j}
\end{figure}
Since, for each $j$, $(F_{j}^{\prime\prime\prime} + a_2 F_{j}^{\prime\prime} + a_1 F_{j}^\prime + a_{0}F_{j})$ is trivial in $\mathbf{Foam}_{/\ell}$ by (DR), the claim shall follow.
To avoid graphical calculus we make use of polynomials. So, let us denote by the monomial $A^{r}B^{s}C^{t}$ the foam (in $(\mathbb{R}^{2}\times \{0\} )\times [0,1]$) shown in Figure \ref{fig:prefoam}, where $s$, $r$ and $t$ indicate the number of dots in the regions $A$, $B$ and $C$ respectively.
By definition $T(F)(\beta)$ can be written as follows
\begin{small}
\[T(F)(\beta) = A^{2}B^{2} + a_{1}^{\prime}(A^2B + AB^2) + ( a_{1}^\prime +1)^2 AB + a_{0}^\prime(A^{2} + B^{2}) + a_{1}^\prime a_{0}^\prime (A + B)+(a_{0}^\prime)^{2}.\]
\end{small}
With this notation we can write the dot permutation relations (DP1), (DP2) and (DP3) described in Proposition \ref{proposition:Mackaayvazrel} as follows:
\begin{equation}
\tag{DP1}
 A + B + C = - a_2
 \label{eq:lineareq}
\end{equation}
\begin{equation}
\tag{DP2}
 AC + BC + AB  =  a_1
 \label{eq:quadraticeq}
\end{equation}
\begin{equation}
\tag{DP3}
 ABC = -a_0
 \label{eq:cubiceq}
\end{equation}
Since all foam relations are local, and since we are allowed to move the dots inside regions, the formal products above satisfy associativity. Using Relations \eqref{eq:lineareq}, \eqref{eq:quadraticeq} and \eqref{eq:cubiceq} we obtain
\begin{small}
\[ T(F)(\beta) = B(A^3 + a_2A^2  +a_1 A + a_0) (A - a_1^\prime + 1) = \]
\[ = \left(BA^4 + a_2 BA^3  +a_1 BA^2 + a_0BA\right) + (- a_1^\prime + 1)(A^3 + a_2A^2  +a_1 A + a_0), \]
\end{small}
which is the desired decomposition of $T(F)(\beta)$.
\end{proof}
\begin{figure}[h]
\centering
\begin{tikzpicture}[scale=.65, thick]
\draw[dashed] (-6,0) arc (180:55:3) arc (55:6:3);
\draw (-6,0) arc (180:55:3);
\draw (-6,0) arc (-180:-82:3 and .5);
\draw[dashed] (-6,0) arc (180:22:3 and .5);
\draw (3,-.25) arc (0:128:3);
\draw (3,-.25) arc (0:-150:3 and .5);
\draw[fill, white, opacity =.5, dashed] (0,.2) .. controls +(-.45,1.5)  and +(.3,0).. (-1.85,2.11) .. controls +(-.3,0) and +(-.45,1.5) .. (-2.6,-.5) -- cycle;
\draw[pattern color=red, pattern = north east lines, opacity =.5, dashed] (0,.2) .. controls +(-.45,1.5)  and +(.3,0).. (-1.85,2.11) .. controls +(-.3,0) and +(-.45,1.5) .. (-2.6,-.5) -- cycle;
\draw[white, thick] (0,.2) .. controls +(-.45,1.5)  and +(.3,0).. (-1.85,2.11) .. controls +(-.3,0) and +(-.45,1.5) .. (-2.6,-.5);
\draw[red, thick, dashed] (0,.2) .. controls +(-.45,1.5)  and +(.3,0).. (-1.85,2.11);
\draw[red, thick] (-1.85,2.11) .. controls +(-.3,0) and +(-.45,1.5) .. (-2.6,-.5);
\draw[dashed] (3,-.25) arc (0:91:3 and .5);
\node at (0,2) {B};
\node at (-3.25,2.25) {A};
\node at (-1.5,1) {C};
\draw[fill, white] (-4,.7) rectangle (-3.4,-.3);
\node at (-3.75,.75) {$\underbrace{\bullet\ ...\ \bullet}_{r}$};
\draw[fill, white] (1.8,.7) rectangle (1.2,-.3);
\node at (1.5,.75) {$\underbrace{\bullet\ ...\ \bullet}_{s}$};
\draw[fill, white] (-1.8,-.15) rectangle (-1.2,-.5);
\node at (-1.5,0) {$\underbrace{\textcolor{red}{\bullet\ ...\ \bullet}}_{t}$};
\end{tikzpicture}
\caption{The foam $A^{r}B^{s}C^{t}$.}\label{fig:prefoam}
\end{figure}
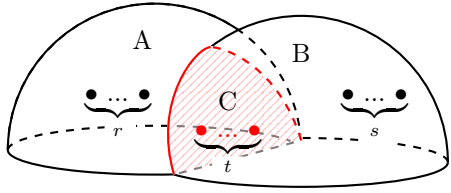

\begin{rem*}
In the proof of Lemma \ref{lemma:dbetavanish} we identified $T(W_{\theta})$ with the $R$-module
\[ M = \frac{R[A,B,C]}{\big(\text{\eqref{eq:lineareq}, \eqref{eq:quadraticeq}, \eqref{eq:cubiceq}}\big)}.\]
\begin{enumerate}
\item The above identification of $T(W_\theta)$ with $M$ completely disregards the quantum grading (resp. filtration). Taking into account the quantum degree (resp. filtration) there is a shift one has to consider. More precisely, we have an isomorphism of graded (resp. filtered) $R$-modules
\[T(W_{\theta}) \simeq \frac{R[A,B,C]}{\big(\text{\eqref{eq:lineareq}, \eqref{eq:quadraticeq}, \eqref{eq:cubiceq}}\big)}(-3),\]
where $deg(A) = deg(B) = deg(C) = 2$.
\item By Proposition \ref{proposition:KhovKuprelgraded}, Example \ref{ex:Tcircle}, and Remark \ref{rem:gradingcircle} we have the following isomorphisms of graded (resp. filtered) $R$-modules
\[T\left(W_{\theta}\right) \simeq T(\bigcirc) (-1) \oplus T(\bigcirc)(1) \simeq \frac{R[x]}{(\omega(x))}(-3) \oplus \frac{R[y]}{(\omega(y))}(-1),\]
where $deg(x) = deg(y) = 2$.
In the follow up, we shall make use of this representation of $T(W_{\theta})$ rather than $M(-3)$. Of course the two representations are isomorphic as graded (resp. filtered) $R$-modules. An explicit isomorphism is given by:
\[ M(-3) \longrightarrow \frac{R[x]}{(\omega(x))}(-3) \oplus \frac{R[y]}{(\omega(y))}(-1)\::\: \left\lbrace\begin{matrix}A^k\phantom{B} \mapsto x^k & k = 0,1,2\\ A^kB \mapsto y^k & k = 0,1,2\end{matrix}\right.\]
\end{enumerate}
\end{rem*}

Now, we are ready to prove the following result.

\begin{proposition}\label{proposition:betaiscycle}
Let $D$ be an oriented link diagram. Then, $\beta_{\omega,x_1}(D)$ is a cycle.
\end{proposition}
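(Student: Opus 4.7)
The plan is to verify $d\beta = 0$ (where $\beta := \beta_{\omega,x_1}(D)$) by decomposing the differential into local contributions at the crossings and invoking Lemma \ref{lemma:dbetavanish} where applicable. Since $\beta \in T(\underline{w}_D) \subseteq C^0_\omega(D,R)$ and the cube-neighbours of $\underline{w}_D$ are obtained by flipping a single crossing's resolution, $d\beta$ splits as $\sum_c d_c(\beta)$ over the crossings $c$ of $D$; here $d_c \colon T(\underline{w}_D) \to T(W_c)$ is the map induced by the foam that is the zip (or unzip) in a neighbourhood of $c$ and the identity cylinder outside. So it suffices to prove $d_c(\beta)=0$ for every $c$.

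Using the algebraic identification \eqref{eq:algdefofb3}, $\beta = \prod_\gamma q(x_\gamma)$ with $q(x) = x^2 + a_1' x + a_0'$, and $d_c$ acts as the identity on the tensor factors $T(\gamma)$ for circles $\gamma$ disjoint from $c$. The verification therefore reduces to a local calculation on the circle(s) incident to $c$, and I would split into two cases depending on whether the two arcs of $\underline{w}_D$ at $c$ belong to distinct Seifert circles or to the same one.

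In Case 1 (two distinct Seifert circles $\gamma_1, \gamma_2$ meet at $c$), the restriction of $d_c$ to $T(\gamma_1) \otimes T(\gamma_2)$ is exactly $T(F)$ with $F$ the foam of Figure \ref{fig:foamlemma}, and the corresponding factor of $\beta$ is $q(x_{\gamma_1})\,q(x_{\gamma_2})$. Lemma \ref{lemma:dbetavanish} gives $T(F)(q(x_{\gamma_1})\,q(x_{\gamma_2}))=0$, and hence $d_c(\beta)=0$.

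Case 2 (both arcs at $c$ lie on the same Seifert circle $\gamma$) is the main obstacle: only one factor $q(x_\gamma)$ is in play and $d_c$ is a connected foam from $\gamma$ to a theta-type global web $\theta'$. I would handle this by reducing to a Lemma-\ref{lemma:dbetavanish}-style calculation: cut $\gamma$ by an auxiliary arc $\alpha$ on $\mathbb{D}_\gamma$ disjoint from the crossing region into two virtual circles $\gamma'_1, \gamma'_2$, decomposing $\mathbb{D}_\gamma$ as a saddle glueing of $\mathbb{D}_{\gamma'_1} \sqcup \mathbb{D}_{\gamma'_2}$; since this saddle is spatially disjoint from the crossing region, it commutes with the zip and the computation of $d_c(q(x_\gamma))$ is rewritten in terms of $T(F)$ on the two auxiliary circles, where the Lemma applies. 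Alternatively, one can compute $d_c(q(x_\gamma))$ directly using the concrete identification $T(\theta') \simeq R[x]/(\omega(x)) \oplus R[y]/(\omega(y))$ from the remark following Lemma \ref{lemma:dbetavanish}, exploiting the identity $(x_\gamma - x_1)\,q(x_\gamma) = \omega(x_\gamma) \equiv 0$ in $T(\gamma)$ together with the fact that multiplication by $x_\gamma$ intertwines, under $d_c$, with multiplication by one of the generators $x,y$ of $T(\theta')$. Either route yields $d_c(\beta)=0$, completing the verification.
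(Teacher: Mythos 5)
Your decomposition of $d$ into per-crossing contributions and your Case 1 together constitute exactly the paper's proof: the paper reduces the differential restricted to $T(\underline{w}_D)$ to copies of the zip foam $F$ of Figure \ref{fig:foamlemma} acting on two \emph{distinct} circles, tensored with identities, and then invokes Lemma \ref{lemma:dbetavanish}. The gap is your Case 2. The paper disposes of it in the first sentence of its proof: the oriented web resolution is bipartite, meaning that the two arcs of $\underline{w}_D$ created at any crossing of $D$ always lie on distinct circles of $\underline{w}_D$. (This is the classical fact about Seifert circles: the oriented smoothing of a crossing produces two locally parallel arcs, whereas two arcs of a single embedded circle joined by a short transversal must be antiparallel, because the circle has a consistent inside and outside. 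Hence a circle of $\underline{w}_D$ never abuts itself at a crossing.) So your Case 2 is vacuous, every cube-neighbour of $\underline{w}_D$ appearing in the differential is a disjoint union of circles and exactly one theta web, and the Lemma applies everywhere.

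This observation is not optional, because neither of your proposed treatments of Case 2 would actually close it. The saddle-cutting route needs to write $q(x_\gamma)$, with $q(x)=x^2+a_1'x+a_0'$, as the image of $q(x_{\gamma_1'})\otimes q(x_{\gamma_2'})$ under the merge map; but the merge is multiplication in $R[x]/(\omega(x))$, so $q\otimes q\mapsto q^2\equiv q(x_1)\,q$ (using $x\,q(x)\equiv x_1\,q(x)$ mod $\omega$), and the constant $q(x_1)=\omega'(x_1)$ vanishes whenever $x_1$ is a multiple root (e.g.\ $\omega=x^3$, $x_1=0$, $q=x^2$), so one cannot divide by it in general. The second route (the ``intertwining'' of multiplication by $x_\gamma$ with a generator of $T(\theta')$) is asserted rather than proved, and the $\mathfrak{sl}_2$ analogue is a warning sign: for Plamenevskaya's class one has $\Delta(v_-)=v_-\otimes v_-\neq 0$, so a circle of the oriented resolution touching itself at a crossing would genuinely obstruct the cycle condition there. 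The correct resolution of Case 2 is that it never occurs, not that it also evaluates to zero. Replace Case 2 by the bipartiteness observation and your argument coincides with the paper's.
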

\begin{proof}
First, notice that the oriented web resolution is bipartite exactly as the oriented resolution; that is, if two arcs in $\underline{w}_{D}$ were connected by a crossing in $D$, then they belong to different circles in $\underline{w}_{D}$.

Let $\underline{w^\prime}$ be a web resolution which is obtained from $\underline{w}_{D}$ by replacing a $0$-web resolution with a $1$-resolution, and denote by $E$ the set of such resolutions. Notice that each $\underline{w^\prime} \in E$ is the disjoint union of circles and a theta web. In particular, we have the following isomorphism of $R$-modules
\[ T(\underline{w^\prime}) \simeq \bigotimes_{\gamma\in \underline{w}_{D}\setminus \{\gamma_{1},\gamma_{2}\}} \frac{R[x_{\gamma}]}{(\omega(x_{\gamma}))} \otimes \left( \frac{R[y]}{(\omega (y))} \oplus\frac{R[x]}{(\omega(x))}\right), \]
where $\gamma_{1} = \gamma_{1}(\underline{w^\prime})$ and $\gamma_{2}= \gamma_{2}(\underline{w^\prime})$ are the two circles in $\underline{w}_{D}$ which are merged into the theta web in $\underline{w^\prime}$, and the circles in $\underline{w^\prime}$ are identified with the corresponding circles in $\underline{w}_{D}$.

By definition, the differential $d_{geo}$ of the geometric complex is of the form
\[ d_{geo\ \vert\:\underline{w}_{D}} = \sum_{\underline{w^\prime}\in E} \pm F(\underline{w^\prime}),\]
where each $F(\underline{w^\prime})$ is a disjoint union of cylinders and a copy of the foam $F$ in Figure \ref{fig:foamlemma}, and the boundary of $F(\underline{w^\prime})$ is $\underline{w^\prime} \sqcup \underline{w}_{D}$.
Applying the tautological functor $T$, we get
\[ d (T(\underline{w}_{D})) \subseteq \bigoplus_{\underline{w^\prime}\in E} \left(\left( \frac{R[y]}{(\omega(y))} \oplus\frac{R[x]}{(\omega(x))}\right)\otimes \bigotimes_{\gamma\in \underline{w}_{D}\setminus \{\gamma_{1}(\underline{w^\prime}),\gamma_{2}(\underline{w^\prime})\}} \frac{R[x_{\gamma}]}{(\omega(x_{\gamma}))}  \right),\]
and
\[d_{\vert\:T(\underline{w}_{D})} = \bigoplus_{\underline{w^\prime}\in E} \pm \left(T(F) \otimes \bigotimes_{\gamma\in \underline{w}_{D}\setminus \{\gamma_{1}(\underline{w^\prime}),\gamma_{2}(\underline{w^\prime})\}} Id_{R[x_{\gamma}]/(\omega(x_{\gamma}))}\right). \]
The statement now follows immediately from Lemma \ref{lemma:dbetavanish}.
\end{proof}

\subsection{The transverse invariance of the $\beta$-chains}

Now, let us analyse the behaviour of the $\beta$-chains under the maps induced by some Reidemeister moves.
These moves include the closures of the mirror images of the transverse Markov moves. During the proofs in this section the homological degree and the quantum degree (or filtration) will be disregarded.  We remark that the chain homotopy equivalences associated to the Reidemeister moves described here are of (filtered) degree $0$ with respect to the quantum grading (resp. filtration) once the appropriate shifts are taken into account.

\subsubsection*{Negative first Reidemeister move}
Let $D$ be an oriented link diagram, and denote by $D_{-}$ the oriented link diagram obtained from $D$ via a negative first Reidemeister move  on a given arc \textbf{a} (Figure \ref{reidmoves12}).

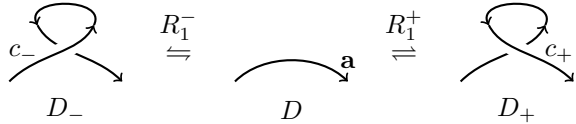
\begin{figure}[H]
\centering
\begin{tikzpicture}[scale = .75, thick]

\draw[->] (-1,0) .. controls +(.5,.5) and +(-.5,.5) ..  (1,0);

\node at (0,-.5) {$D$};
\node at (1,.35) {\textbf{a}};

\node at (4,-.5) {$D_+$};
\node at (-4,-.5) {$D_-$};

\node at (-2,1) {$R_1^-$}; 
\node at (-2,.5) {$\leftrightharpoons$}; 
\node at (2,1) {$R_1^+$}; 
\node at (2,.5) {$\rightleftharpoons$}; 

\draw[<-] (-3,0) .. controls +(-.5,.5) and +(.25,-.5) ..  (-4.5,1);
\pgfsetlinewidth{8*\pgflinewidth}
\draw[white] (-5,0) .. controls +(.5,.5) and +(-.25,-.5) ..  (-3.5,1);
\pgfsetlinewidth{.125*\pgflinewidth}
\draw[<-] (-4.5,1) .. controls +(-.25,.5) and +(.25,.5) ..  (-3.5,1);
\draw[->] (-5,0) .. controls +(.5,.5) and +(-.25,-.5) ..  (-3.5,1);

\begin{scope}[shift= {(8,0)}]
\draw[<-] (-4.5,1) .. controls +(-.25,.5) and +(.25,.5) ..  (-3.5,1);
\draw[->] (-5,0) .. controls +(.5,.5) and +(-.25,-.5) ..  (-3.5,1);
\pgfsetlinewidth{8*\pgflinewidth}
\draw[white] (-3,0) .. controls +(-.5,.5) and +(.25,-.5) ..  (-4.5,1);
\pgfsetlinewidth{.125*\pgflinewidth}
\draw[<-] (-3,0) .. controls +(-.5,.5) and +(.25,-.5) ..  (-4.5,1);
\end{scope}

\node at (4.75,.5) {$c_+$};
\node at (-4.75,.5) {$c_-$};

\end{tikzpicture}
\caption{The negative (left) and positive (right) versions of the first Reidemeister move.}
\label{reidmoves12}
\end{figure} 

In Figure \ref{fig:firstRmove} there is a description of the map associated to a negative Reidemeister move between the geometric complexes (cf. \cite[Section 2.2]{Mackaayvaz07}). The figure should be read as follows: the foams are all embedded in $(\mathbb{R}^{2}\times\{ 0 \}) \times [0,1]$ and are cylinders except in a small cylinder above the arc $\mathbf{a}$, where they look like the ones depicted in Figure \ref{fig:firstRmove}.
\begin{figure}[h]
\begin{tikzpicture}[scale=.3]


\draw[dashed] (0,7.5) circle (2) ;
\draw[dashed] (0,-7.5) circle (2) ;
\draw[dashed] (15,7.5) circle (2) ;
\node at (15,-7.5) {0} ;
\draw[thick,<-] (-1.414,7.5 +1.414) .. controls +(.75,-.75) and +(.75,.75) .. (-1.414,7.5-1.414);
\draw[thick,->] (0,7.5) arc (180:-180:.75);
\draw[thick,<-] (15-1.414,7.5 +1.414) -- (15-0.707,7.5 +0.707)-- (15-0.707,7.5 -0.707)-- (15-1.414,7.5-1.414);
\draw[thick,->] (15-0.707,7.5 +0.707) .. controls +(2,2) and +(2,-2) .. (15-0.707,7.5 -0.707);
\draw[very thick, purple, ->] (15-0.707,7.5 +0.707)-- (15-0.707,7.5 -0.707);

\draw[thick,<-] (-1.414,-7.5 +1.414) .. controls +(.75,-.75) and +(.75,.75) .. (-1.414,-7.5-1.414);


\draw[thick, ->](4,7.5)--(11,7.5);

\draw[thick, ->](4,-7.5)--(11,-7.5);

\draw[thick, ->](-.5,-4)--(-.5,4);
\draw[thick,<-](.5,-4)--(.5,4);

\draw[thick, ->](15.5,-4)--(15.5,4);
\draw[thick,<-](14.5,-4)--(14.5,4);

\draw (-3.5,0) -- (-3,-1) -- (-3,3)-- (-3.5,4) -- cycle;
\draw (-2.75,-.5) arc (180:0:.75) arc (0:-180:.75 and .25);
\draw[dashed] (-1.25,-.5) arc (0:180:.75 and .25);

\begin{scope}[shift ={(-5.5,0)}]
\draw (-3.5,0) -- (-3,-1) -- (-3,3)-- (-3.5,4) -- cycle;
\draw (-2.75,-.5) arc (180:0:.75) arc (0:-180:.75 and .25);
\draw[dashed] (-1.25,-.5) arc (0:180:.75 and .25);
\end{scope}

\begin{scope}[shift ={(-11.75,0)}]
\draw (-3.5,0) -- (-3,-1) -- (-3,3)-- (-3.5,4) -- cycle;
\draw (-2.75,-.5) arc (180:0:.75) arc (0:-180:.75 and .25);
\draw[dashed] (-1.25,-.5) arc (0:180:.75 and .25);
\end{scope}

\begin{scope}[shift ={(10,13)}, yscale = -1]
\draw (-4,1) -- (-2,-1) -- (-2,3)-- (-4,5) -- cycle; 
\draw[pattern =north east lines, pattern color = red, dashed] (-3.5,4.5) .. controls +(0,-1.5) and +(0,-1.5) .. (-2.5,3.5) -- cycle;
\draw[] (-3,2.85) -- (-2,2);
\draw[] (-3.5,4.5) .. controls +(1.5,0) and +(1.5,0) .. (-2.5,3.5) -- cycle;
\draw[] (-1.25,.5)arc (-180:0:.75 and .25);
\draw[] (.25,.5) arc (0:180:.75 and .25);
\draw (-2,2) .. controls +(.5,-.5) and +(0,.5) .. (-1.25,.5);
\draw (-1.75,3.8) .. controls +(.5,-.5) and +(0,.5) .. (.25,.5);
\draw[thick, red] (-3.5,4.5) .. controls +(0,-1.5) and +(0,-1.5) .. (-2.5,3.5) -- cycle ;
\end{scope}

\begin{scope}[shift = {+(7,-1)}]
\node[left] at (-3.75,2) {$-$};
\draw (-3.5,0) -- (-3,-1) -- (-3,3)-- (-3.5,4) -- cycle;
\draw (-2.75,3.5) arc (-180:0:.75) arc (0:360:.75 and .25);
\draw[dashed] (-1.25,3.5) arc (0:180:.75 and .25);
\end{scope}

\node at (-4.25,1.5) {$a_1$};
\node at (-5.5,1.5) {$-$};
\node at (-11.75,1.5) {$-a_{2}\: \sum_{i=0}^{1}$};
\node at (-17.5,1.5) {$F=\:\sum_{i=0}^{2}\quad $};

\node at (-12.25,-.5) {$i$};
\node at (-13.5,3.5) {$2-i$};

\node at (-6.25,-.5) {$i$};
\node at (-7.5,3.5) {$1-i$};

\node at (-6.25,7.5) {$\langle D_{-}  \rangle_{\omega}\ :$};
\node at (-6.25,-7.5) {$\langle D \rangle_{\omega}\ :$};

\node at (6.5,1) {$=G$};
\node at (0,-7) {$\mathbf{a}$};
\end{tikzpicture}
\caption{Schematic description of the maps encoding a negative first Reidemeister move. The numbers next to the foams (which are drawn in $(\mathbb{R}^{2}\times\{0\})\times [0,1]$ and should be read top to bottom) indicate the number of dots. The horizontal maps are the differentials. Notice that there is a difference in the sign of the maps $F$ and $G$ with respect to \cite{Mackaayvaz07}.}
\label{fig:firstRmove}
\end{figure}
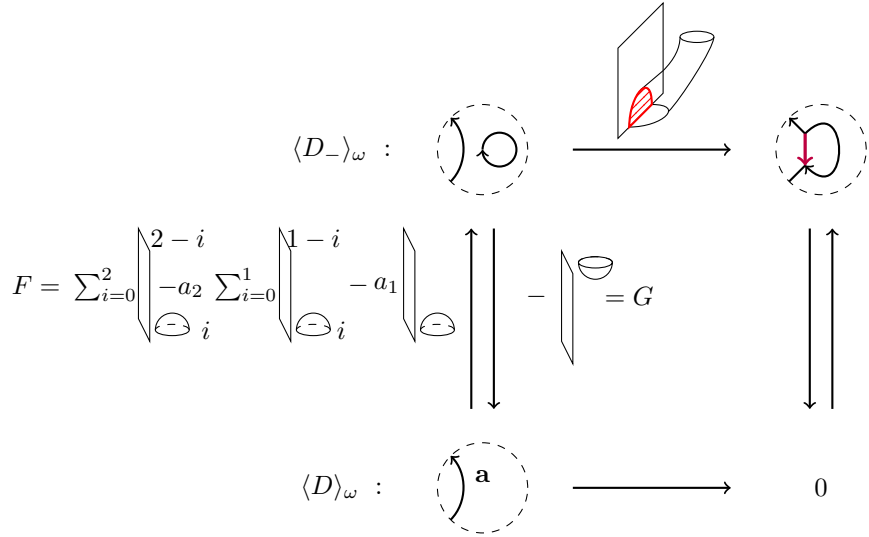

\begin{rem*}
The maps $F$ and $G$ described in Figure \ref{fig:firstRmove} have the opposite sign with respect to the corresponding maps defined in \cite[Section 2.2]{Mackaayvaz07}.
\end{rem*}

Before proceeding we need the following lemma.

\begin{lemma}\label{lemma:techlemmaforfirstmove}
Let $P(x) \in R[x]$ be the polynomial $x^2 + a_{1}^\prime x + a_{0}^\prime$. Then,
\begin{small}
\[[x^2P(x)]\otimes [1] + [xP(x)]\otimes [y] + a_2[xP(x)]\otimes [1] - x_1[P(x)]\otimes[y] -x_1 a_1^\prime [P(x)]\otimes[1]\]
\end{small}
is zero in
\[\frac{R[x]}{(\omega (x))}\otimes \frac{R[y]}{(\omega (y))}\]
\end{lemma}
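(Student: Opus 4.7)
The strategy is completely algebraic and relies on a single observation together with the identity $a_2 = a_1' - x_1$ from Equation \eqref{eq:relationcoeff}.

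First I would exploit the factorisation $\omega(x) = (x-x_1) P(x)$. This forces $(x - x_1) P(x) \equiv 0$ in $R[x]/(\omega(x))$, so
\[ [xP(x)] = x_1 [P(x)] \quad \text{in } R[x]/(\omega(x)). \]
Multiplying by $x$ once more and reapplying this identity gives $[x^2 P(x)] = x_1 [xP(x)] = x_1^2 [P(x)]$. These two equalities allow me to pull every occurrence of $[x^k P(x)]$ in the expression of the lemma back to a scalar multiple of $[P(x)]$.

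Next I would substitute these into the given chain, obtaining
\[ \bigl(x_1^2 + a_2 x_1 - x_1 a_1'\bigr)\,[P(x)]\otimes [1] \;+\; (x_1 - x_1)\,[P(x)]\otimes [y]. \]
The coefficient of $[P(x)]\otimes [y]$ vanishes on the nose. Using $a_2 = a_1' - x_1$ (Equation \eqref{eq:relationcoeff}), the coefficient of $[P(x)]\otimes [1]$ becomes
\[ x_1^2 + (a_1' - x_1) x_1 - x_1 a_1' = x_1^2 + x_1 a_1' - x_1^2 - x_1 a_1' = 0, \]
and the whole expression is zero in $R[x]/(\omega(x)) \otimes R[y]/(\omega(y))$.

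There is essentially no obstacle here: the lemma is purely a bookkeeping check that the coefficients $a_1', a_0'$ chosen in Equation \eqref{eq:relationcoeff} are precisely the ones that make the $\beta$-chain algebra compatible with the factorisation of $\omega$ at the root $x_1$. The only thing one needs to be careful about is that the whole computation can be carried out in the first tensor factor, since all the nontrivial cancellations happen in $R[x]/(\omega(x))$; the second factor $R[y]/(\omega(y))$ merely records whether the summand carries a $[1]$ or a $[y]$ in the second slot.
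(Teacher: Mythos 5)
Your proof is correct and uses essentially the same argument as the paper: both rest on the single identity $[xP(x)] = x_1[P(x)]$ in $R[x]/(\omega(x))$, which follows from $\omega(x) = (x-x_1)P(x)$, combined with the relation $a_2 = a_1' - x_1$. The only cosmetic difference is that you reduce every term to a multiple of $[P(x)]$ before cancelling, whereas the paper cancels $[x^2P(x)] - x_1[xP(x)]$ directly; the computations are identical in substance.
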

\begin{proof}
First, let us point out that
\begin{equation}
x P(x) = x_{1} P (x)\quad mod\ \omega(x).
\label{eq:basesl3}
\end{equation}
Using  the equality $a_{2} = a_1^\prime - x_1$, we get
\[[x^2 P(x)]\otimes [1] + [x P(x)]\otimes [y] + a_2[x P(x)]\otimes [1] =\]
\[ = [x^2P (x)]\otimes [1] + [x P (x)]\otimes [y] + a_1^\prime [x P (x)]\otimes [1] - x_1	[xP(x)]\otimes [1] = \]
\[ = x_1[P(x)]\otimes [y] + a_1^\prime x_1[P(x)]\otimes [1], \]
where the last equality follows from Equation \eqref{eq:basesl3}.
\end{proof}

Denote by
\[\Phi_{1}:C_{\omega}^{\bullet}(D)  \longrightarrow C_{\omega}^{\bullet}(D_{-})\]
the map associated to the (linear combination of) foam(s) $F$ in Figure \ref{fig:firstRmove}, and by
\[\Psi_{1}:C_{\omega}^{\bullet}(D_{-})  \longrightarrow C_{\omega}^{\bullet}(D),\]
the map associated to the foam denoted by $G$ in the same figure.

\begin{proposition}\label{proposition:betaandr1sl3}
Let $D$ be an oriented link diagram, and let $D_{-}$ be the diagram obtained from $D$ via a negative first Reidemeister move. Then,
\[\Psi_{1}(\beta_{\omega, x_1}(D_{-})) = \beta_{\omega, x_1}(D)\quad\text{and}\quad \Phi_{1}(\beta_{\omega, x_1}(D)) = \beta_{\omega, x_1}(D_{-}).\]
\end{proposition}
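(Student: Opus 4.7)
The proof reduces both identities to algebraic computations in the polynomial description of the oriented-resolution summand given by Equation \eqref{eq:algdefofb3}. The key geometric observation is that the oriented web resolution of $D_-$ is obtained from that of $D$ by adjoining one extra small loop $\gamma_0$ (the orisplit of the new negative crossing), while the loop $\gamma_1 \subseteq \underline{w}_D$ containing the affected arc is naturally identified with its counterpart in $\underline{w}_{D_-}$; consequently
\[
T(\underline{w}_{D_-}) \cong T(\underline{w}_D) \otimes_R R[x_{\gamma_0}]/(\omega(x_{\gamma_0})),
\]
and by construction
\[
\beta_{\omega, x_1}(D_-) = P(x_{\gamma_0}) \cdot \beta_{\omega, x_1}(D), \qquad P(x) := x^2 + a_1' x + a_0'.
\]

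The first step I would carry out is to translate the foams $F$ and $G$ of Figure \ref{fig:firstRmove} into the corresponding algebraic operations on the degree-zero oriented-resolution summand via the tautological functor. Outside a small neighbourhood of the R1 region all foams are cylindrical, hence act as the identity on the tensor factors indexed by loops different from $\gamma_1$ and $\gamma_0$. In the R1 region, a dotted cylinder along the arc together with a dotted cap on $\gamma_0$ translates under $T$ to multiplication by a monomial $x_{\gamma_1}^i \, x_{\gamma_0}^j$, so the formal sum defining $F$ becomes multiplication by
\[
p(x_{\gamma_1}, x_{\gamma_0}) \;=\; x_{\gamma_1}^2 + x_{\gamma_1} x_{\gamma_0} + x_{\gamma_0}^2 + a_2 (x_{\gamma_1} + x_{\gamma_0}) + a_1 \;=\; \frac{\omega(x_{\gamma_1}) - \omega(x_{\gamma_0})}{x_{\gamma_1} - x_{\gamma_0}},
\]
whereas the undotted cap appearing in $G$ corresponds to the counit $\epsilon \colon R[x_{\gamma_0}]/(\omega(x_{\gamma_0})) \to R$ determined by $\epsilon(1) = \epsilon(x_{\gamma_0}) = 0$ and $\epsilon(x_{\gamma_0}^2) = 1$.

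With these identifications, the statement for $\Phi_1$ becomes the congruence
\[
p(x_{\gamma_1}, x_{\gamma_0}) \cdot P(x_{\gamma_1}) \;\equiv\; P(x_{\gamma_1}) \cdot P(x_{\gamma_0}) \pmod{(\omega(x_{\gamma_1}),\, \omega(x_{\gamma_0}))}.
\]
Expanding the difference $\big(p(x,y) - P(y)\big) P(x)$ and substituting the relations $a_2 = a_1' - x_1$, $a_1 = a_0' - x_1 a_1'$ from Equation \eqref{eq:relationcoeff} rewrites this expression as exactly the left-hand side of Lemma \ref{lemma:techlemmaforfirstmove}, which vanishes; the claim for $\Phi_1$ follows because the remaining tensor factors of $\beta_{\omega,x_1}(D)$ are untouched by $\Phi_1$. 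The statement for $\Psi_1$ is then immediate from
\[
\epsilon\big(P(x_{\gamma_0})\big) = \epsilon(x_{\gamma_0}^2) + a_1' \epsilon(x_{\gamma_0}) + a_0' \epsilon(1) = 1,
\]
so that $\Psi_1(\beta_{\omega, x_1}(D_-)) = \epsilon(P(x_{\gamma_0})) \cdot \beta_{\omega, x_1}(D) = \beta_{\omega, x_1}(D)$.

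The main technical point is the first step: verifying that the pictorial data of Figure \ref{fig:firstRmove} really does translate to the precise multiplication operator $p(x,y)$ and to the bare counit, with the correct overall signs and dot placements. Once this dictionary is in place, the identity for $\Phi_1$ is exactly what Lemma \ref{lemma:techlemmaforfirstmove} was tailored to deliver, and the identity for $\Psi_1$ reduces to a one-line Frobenius counit computation.
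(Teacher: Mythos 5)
Your proposal is correct and follows essentially the same route as the paper: both identify $T(\underline{w}_{D_-})$ with $T(\underline{w}_D)\otimes R[x_{\gamma_0}]/(\omega(x_{\gamma_0}))$, translate $F$ and $G$ into multiplication by the sum of monomials $\sum x_{\gamma_1}^{2-i}x_{\gamma_0}^{i}+a_2\sum x_{\gamma_1}^{1-i}x_{\gamma_0}^{i}+a_1$ and the counit respectively, and then reduce the $\Phi_1$ identity to exactly the vanishing statement of Lemma \ref{lemma:techlemmaforfirstmove} and the $\Psi_1$ identity to $\epsilon(P(x_{\gamma_0}))=1$. Your rewriting of the multiplication operator as the divided difference $(\omega(x)-\omega(y))/(x-y)$ is a pleasant repackaging but not a different argument, and the sign bookkeeping you flag as the remaining technical point is handled in the paper only by the convention stated in the caption of Figure \ref{fig:firstRmove} (the sphere relation contributes $-1$ and the map $G$ carries an extra sign, so the net counit value is indeed $+1$).
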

\begin{proof}
Notice that $\underline{w}_{D_{-}}$ is mapped to $\underline{w}_{D}$ by $\Psi_{1}$ and that $\underline{w}_{D_-}$ can be identified with $\underline{w}_{D} \sqcup \bigcirc$. By the Khovanov-Kuperberg circle removal relation, Example \ref{ex:Tcircle} and the sphere relation we can identify $\Psi_{1\: \vert \underline{w}_{D}}$ with the map
\[\frac{R[x_{\gamma^\prime}]}{(\omega (x_\gamma^\prime))} \otimes \bigotimes_{\gamma\in\underline{w}_{D}} \frac{R[x_{\gamma}]}{(\omega (x_\gamma))} \longrightarrow \bigotimes_{\gamma\in\underline{w}_{D}} \frac{R[x_{\gamma}]}{(\omega(x_\gamma))}\]
given by
\[ q_{\gamma^\prime}(x_{\gamma\prime}) \otimes \bigotimes_{\gamma\in\underline{w}_D} q_{\gamma}(x_{\gamma}) \longmapsto -\epsilon(q^\prime(x_{\gamma^\prime}))\bigotimes_{\gamma\in\underline{w}_{D}}  q_{\gamma}(x_{\gamma}), \]
where $\gamma^\prime$ indicates the circle in $\underline{w}_{D_-}\setminus \underline{w}_{D}$ and
\[\epsilon: \frac{R[x]}{(\omega (x))} \longrightarrow R\: :\: [ax^{2} + bx + c] \mapsto -a.\]
Since in the case of $\beta_{\omega, x_1}(D)$ we have $q_\gamma (x) = q_{\gamma^\prime}(x) = [P(x)] = [x^{2} + a_1^\prime x + a_0^\prime]$ the first part of the statement follows.

Similarly to what has been done with $\Psi_1$, we can identify $\Phi_1$ with the map
\[ \bigotimes_{\gamma\in\underline{w}_{D}} \frac{R[x_{\gamma}]}{(\omega (x_\gamma))} \longrightarrow \frac{R[x_{\gamma^\prime}]}{(\omega (x_\gamma^\prime))} \otimes\bigotimes_{\gamma\in\underline{w}_{D}} \frac{R[x_{\gamma}]}{(\omega (x_\gamma))}\]
mapping $ \bigotimes_{\gamma\in\underline{w}_D} q_{\gamma}(x_{\gamma})$ to
\begin{small}
\[-\left(\sum_{i=0}^{2}(x_{\gamma_{\mathbf{a}}}^{2-i}q_{\gamma_{\mathbf{a}}} \otimes x_{\gamma^\prime}^{i})+ a_{2} \sum_{i=0}^{1}(x_{\gamma_{\mathbf{a}}}^{1-i}q_{\gamma_{\mathbf{a}}}\otimes x_{\gamma^\prime}^{i}) + a_{1}(q_{\gamma_{\mathbf{a}}}\otimes 1)\right)\otimes  \bigotimes_{\gamma\in\underline{w}_{D}\setminus \{ \gamma_{\mathbf{a}}\}}  q_{\gamma}(x_{\gamma}), \]
\end{small}
where $\gamma_{\mathbf{a}}$ is the circle in $\underline{w}_{D}$ containing the arc $\mathbf{a}$ (see Figure \ref{fig:firstRmove}) and $q_{\gamma_{\mathbf{a}}} = q_{\gamma_{\mathbf{a}}}(x_{\gamma_{\mathbf{a}}})$.
It is now easy to see that
\begin{small}
\begin{align*}
 \Phi_1 (\beta_{\omega, x_1}(D)) =  &\left( [x_{\gamma_{\mathbf{a}}}^2 P(x_{\gamma_{\mathbf{a}}})]\otimes [1] + [x_{\gamma_{\mathbf{a}}}P(x_{\gamma_{\mathbf{a}}})]\otimes [x_{\gamma^\prime}] + a_2[x_{\gamma_{\mathbf{a}}}P(x_{\gamma_{\mathbf{a}}})]\otimes [1] +\right. \\
& - x_1[P(x_{\gamma_{\mathbf{a}}})]\otimes[x_{\gamma^\prime}]-x_1 a_1 [P(x_{\gamma_{\mathbf{a}}})]\otimes[1] \Big) \otimes  \bigotimes_{\gamma\in\underline{w}_{D}\setminus \{ \gamma_{\mathbf{a}}\}}  P(x_{\gamma}) + \beta_{\omega, x_1}(D_-) =\\
= &\ \beta_{\omega, x_1}(D_-)
\end{align*}
\end{small}
where the last equality is due to Lemma \ref{lemma:techlemmaforfirstmove}.
\end{proof}

\subsubsection*{Second Reidemeister move}

Now, let us turn to the coherent version of the second Reidemeister move. Let $D$ be an oriented link diagram. Let \textbf{a} and \textbf{b} be two (un-knotted) arcs of $D$ lying in a small ball. Performing a second Reidemeister move on these arcs inserts two adjacent crossings, say $c_1$ and $c_2$, of opposite types.

Recall that a Reidemeister move is coherent if it can be obtained by rotating or taking the mirror image of the one in Figure \ref{fig:coherentR22}. Denote by $D^{\prime}$ the link obtained from $D$ by performing a coherent second Reidemeister move. Finally, denote by $\underline{w}^\prime_{D^{\prime}}$ the web resolution of $D^{\prime}$ where all crossings but $c_1$ and $c_{2}$ are resolved as in the oriented web resolution.

\begin{figure}[H]
\centering
\begin{tikzpicture}[scale = .15, thick]

\ocross{-15}{-2}
\icross{-6}{-2}
\draw (-6,2) .. controls +(-1.5,1.5) and +(1.5,1.5) ..  (-11,2); 
\draw (-11,-2) .. controls +(1.5,-1.5) and +(-1.5,-1.5) ..  (-6,-2);

\draw[->] (-15,2) -- (-16,3); 
\draw[->] (-15,-2) -- (-16,-3);
\draw (-2,2) -- (-1,3); 
\draw (-2,-2) -- (-1,-3);

\node at (3,0) {$\rightleftharpoons$};

\draw[->] (21,3) .. controls +(-2,-2) and +(2,-2) ..  (8,3); 
\draw[<-] (8,-3) .. controls +(2,2) and +(-2,2) ..  (21,-3);

\node at (14.5,-5) {$D$};
\node at (-8.5,-5) {$D^{\prime}$};
\node at (-13,3.5) {$c_1$};
\node at (-3.5,3.5) {$c_2$};
\node at (20,3.5) {$\mathbf{a}$};
\node at (20,-3.5) {$\mathbf{b}$};
\end{tikzpicture}
\caption{A coherent version of the second Reidemeister move. All other coherent second Reidemeister moves are obtained by rotating or taking the mirror image of the one in figure.}
\label{fig:coherentR22}
\end{figure}
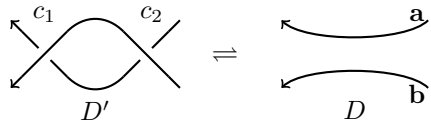

The map associated to the second Reidemeister move at the level of geometric complexes was defined by Mackaay and Vaz as in Figure \ref{fig:secondRmcoer}. Denote by $\Phi_2$ and $\Psi_2$ the two maps
\[\Phi_2 : C_{\omega}^\bullet(D,R)\longrightarrow C_{\omega}^\bullet(D^{\prime},R)\qquad \Psi_2 : C_{\omega}^\bullet(D^{\prime},R)\longrightarrow C_{\omega}^\bullet(D,R)\]
associated to the coherent second Reidemeister move.
\begin{figure}[h]
\centering
\includegraphics[scale=.35]{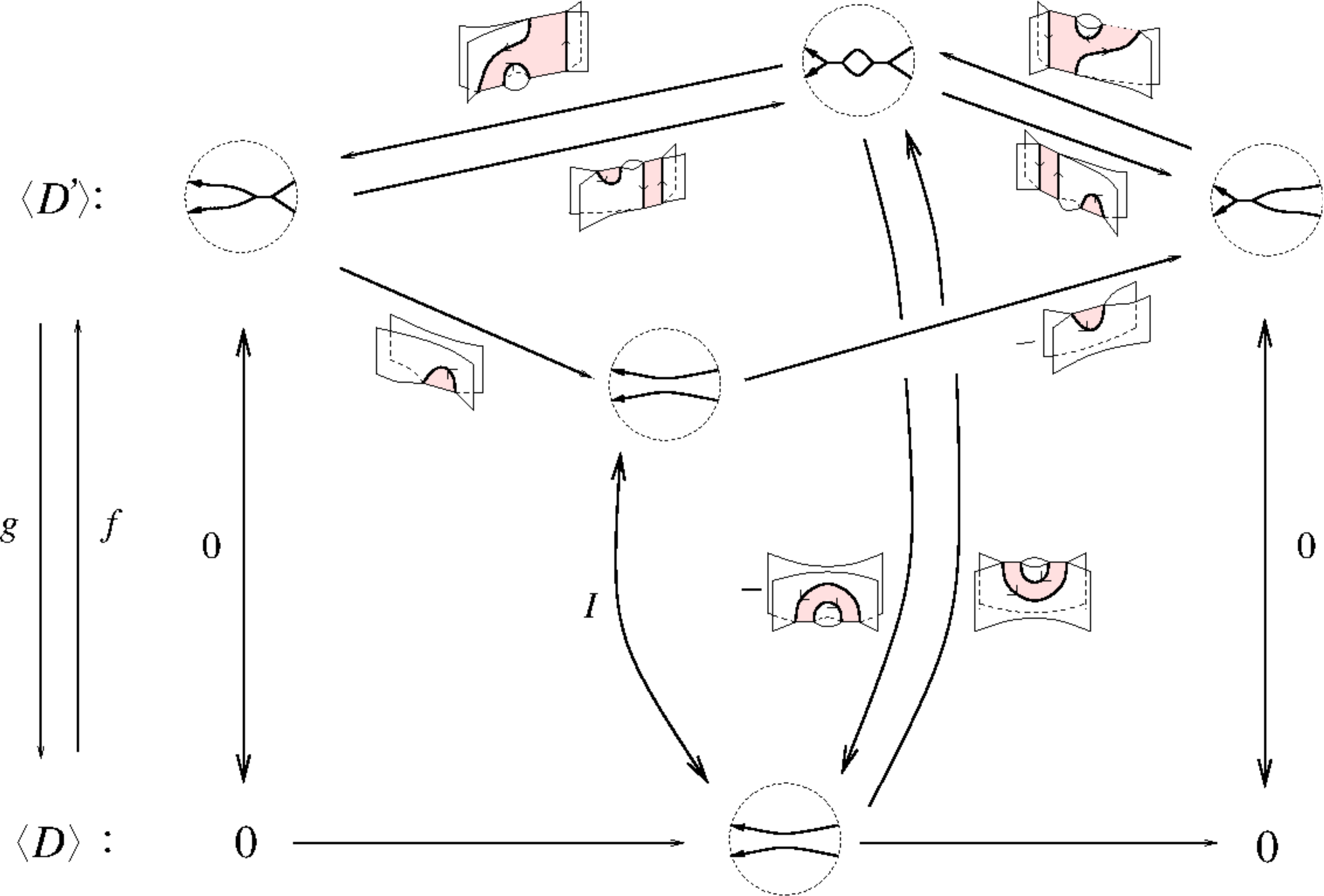} 
\caption{Map associated to the coherent second Reidemeister move.}\label{fig:secondRmcoer}
\end{figure}

\begin{proposition}\label{proposition:secondcoersl3}
Let $D$ be an oriented link diagram and let $D^{\prime}$ be the diagram obtained from $D$ by performing a coherent second Reidemeister move. Then,
\[\Phi_2 (\beta_{\omega, x_1}(D)) = \beta_{\omega, x_1}(D^{\prime})\quad\text{and}\quad\Psi_2 (\beta_{\omega, x_1}(D^{\prime})) = \beta_{\omega, x_1}(D)\]
\end{proposition}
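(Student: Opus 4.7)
The plan is to mimic the strategy of Proposition \ref{proposition:betaandr1sl3}: make $\Phi_2$ and $\Psi_2$ explicit on the summand of $C^{0}_{\omega}$ containing the $\beta$-chain, and verify the two identities by a direct computation in the quotient-ring description \eqref{eq:algdefofb3} of $T(\underline{w})$.

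The first key observation is that the coherent R2 move inserts one positive and one negative crossing, whose oriented smoothings (the $1$-resolution of the positive, the $0$-resolution of the negative) are both the orientation-preserving ``split'' resolutions. Thus the small disk in which the move takes place contains exactly the same two oppositely-oriented parallel arcs in $\underline{w}_{D^{\prime}}$ as it does in $\underline{w}_D$. Outside of this disk the diagrams agree, so the circles of $\underline{w}_D$ are in canonical bijection with those of $\underline{w}_{D^{\prime}}$, and under the induced identification $T(\underline{w}_D) \simeq T(\underline{w}_{D^{\prime}})$ the chains $\beta_{\omega,x_1}(D)$ and $\beta_{\omega,x_1}(D^{\prime})$ are manifestly the same element.

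Next, reading off Figure \ref{fig:secondRmcoer} I would identify the components of $\Phi_2$ landing in each summand of $C^{0}_{\omega}(D^{\prime},R)$ starting from $T(\underline{w}_D) \subseteq C^{0}_{\omega}(D,R)$, and symmetrically for $\Psi_2$. The component going from $T(\underline{w}_D)$ into $T(\underline{w}_{D^{\prime}})$ is induced by a disjoint union of annular cylinders over $\underline{w}_D = \underline{w}_{D^{\prime}}$; by the tautological functor it realises the canonical identification of the previous paragraph (up to the standard sign), and therefore contributes exactly $\beta_{\omega,x_1}(D^{\prime})$ to $\Phi_2(\beta_{\omega,x_1}(D))$.

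The remaining — and main — step is to verify that every other component of $\Phi_2$, namely those landing in a summand $T(\underline{w}_{v^{\prime}}) \subseteq C^{0}_{\omega}(D^{\prime},R)$ with $v^{\prime}$ a non-oriented resolution of the two new crossings, annihilates $\beta_{\omega,x_1}(D)$. Each such component is induced by a foam containing a zip or unzip near the disk of the move; translated through \eqref{eq:algdefofb3} into the quotient ring, these operations multiply the local factor $P(x_{\gamma}) = x_{\gamma}^{2} + a_{1}^{\prime} x_{\gamma} + a_{0}^{\prime}$ of $\beta_{\omega,x_1}(D)$ by a class proportional to $[x_{\gamma} - x_1]$ (with additional correction terms coming from digon or square removals applied to any intermediate theta web). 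Since $P(x)(x - x_1) = \omega(x)$ vanishes in $R[x]/(\omega(x))$, all such contributions are zero, and the argument for $\Psi_2$ is completely analogous. I expect the main obstacle to be the bookkeeping of signs and the careful identification of each individual foam component, since once this is in place the cancellation itself is the purely algebraic identity $P(x)(x-x_1) \equiv 0$, in the spirit of Lemma \ref{lemma:techlemmaforfirstmove}.
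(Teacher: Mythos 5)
Your overall architecture matches the paper's: identify $\underline{w}_D$ with $\underline{w}_{D'}$, observe that the identity component of $\Phi_2$ produces $\beta_{\omega,x_1}(D')$, and reduce everything to showing that the single remaining component out of $T(\underline{w}_D)$ in homological degree $0$ annihilates $\beta_{\omega,x_1}(D)$ (for $\Psi_2$ there is in fact nothing left to check, since its restriction to $T(\underline{w}_{D'})$ is simply the identity onto $T(\underline{w}_D)$, so it is not ``completely analogous'' but strictly easier). The gap is in how you justify that vanishing. The extra component lands in $T(\underline{w}'_{D'})$, where $\underline{w}'_{D'}$ contains thick edges and trivalent vertices; this module is \emph{not} of the form \eqref{eq:algdefofb3}, so you cannot ``translate through \eqref{eq:algdefofb3} into the quotient ring'' and conclude via $P(x)(x-x_1)\equiv 0 \bmod \omega(x)$ as in Lemma \ref{lemma:techlemmaforfirstmove}. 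The zip foam does not multiply $P(x_\gamma)$ by a class proportional to $(x_\gamma-x_1)$: it sends $P(x)\otimes P(y)$ to (essentially) $P(A)P(B)$ inside $T(W_\theta)\simeq R[A,B,C]/(\mathrm{DP1},\mathrm{DP2},\mathrm{DP3})$, and the vanishing of that class is a genuine computation with the dot-permutation relations --- one must use $A+B+C=-a_2$ and its companions to rewrite $P(A)P(B)$ as a multiple of $\omega(A)$, which is then killed by (DR). Your parenthetical ``additional correction terms coming from digon or square removals'' is precisely where all the work lies, and as stated the mechanism you describe would not compile into a proof.

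The paper closes this gap cleanly: the non-identity component of $\Phi_{2\,\vert T(\underline{w}_D)}$ is $T(F')$, where $F'$ factors as $G\circ F$ with $F$ the zip foam of Figure \ref{fig:foamlemma} (see Figure \ref{fig:cobFprime}), so
\[ T(F')(\beta_{\omega,x_1}(D)) = T(G)\bigl(T(F)(\beta_{\omega,x_1}(D))\bigr)=0 \]
by Lemma \ref{lemma:dbetavanish}. That lemma --- not Lemma \ref{lemma:techlemmaforfirstmove} --- is the right tool here, and it is already proved earlier in the paper; you should invoke it after verifying the factorization of the foam, rather than attempting the quotient-ring shortcut.
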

\begin{proof}
First notice that $\underline{w}_{D}$ can be easily identified with $\underline{w}_{D^{\prime}}$. With this identification we have that $\Psi_{2\: \vert T(\underline{w}_{D^\prime})}$ behaves as the identity map (cf. Figure \ref{fig:secondRmcoer}), and the second part of the statement follows.
The map $\Phi_{2\:\vert T(\underline{w}_{D})}$ sends $T(\underline{w}_{D})$ to $T(\underline{w}_{D}) \oplus T(\underline{w}^\prime_{D^{\prime}})$. More precisely, we have
\[ \Phi_{2\:\vert \underline{w}_{D}} = Id_{\underline{w}_{D}} \oplus T(F^\prime),\]
where $F^\prime$ is the foam drawn in Figure \ref{fig:cobFprime}. To conclude it suffices to prove that:
\[T(F^\prime) (\beta_{\omega, x_1}(D)) = 0.\]
This is immediate from Lemma \ref{lemma:dbetavanish}, once one notices that the foam $F^\prime$ is the composition of the foam $F$ in Figure \ref{fig:foamlemma} and a foam $G$ (see Figure \ref{fig:cobFprime}).
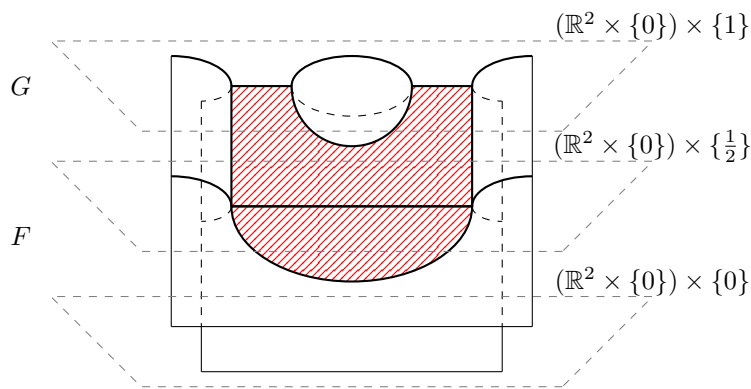
\begin{figure}[h!]
\centering
\begin{tikzpicture}[scale=.4, rotate = 180]
\draw[pattern color = red ,pattern = north east lines, thick] (0,0) arc (0:180:2)--(-6,0) -- (-6,4) -- (2,4) -- (2,0) -- cycle;
\draw[dashed] (0,0) arc (0:180:2 and 1);
\draw[thick] (0,0) arc (0:-180:2 and 1);
\draw[dashed] (-6,0) arc (0:90:1 and .5);
\draw[thick] (-6,0) arc (0:-90:2 and 1);
\draw[dashed] (2,0) arc (180:90:1 and .5);
\draw[thick] (2,0) arc (180:270:2 and 1);
\draw[dashed] (-6,4) arc (0:90:1 and .5);
\draw[thick] (-6,4) arc (0:-90:2 and 1);
\draw[dashed] (2,4) arc (180:90:1 and .5);
\draw[thick] (2,4) arc (180:270:2 and 1);

\draw[pattern color = red ,pattern = north east lines, thick] (2,4) arc (0:180:4 and 2.5) -- cycle;
\draw (-8,8) -- (4,8);
\draw (-7,9.5) -- (3,9.5);
\draw (-8,8) -- (-8,-1);
\draw (4,8) -- (4,-1);
\draw (-7,9.5) -- (-7,8);
\draw (3,9.5) -- (3,8);
\draw[dashed] (-7,.5) -- (-7,8);
\draw[dashed] (3,.5) -- (3,8);

\draw[gray, dashed] (-9,5.5) -- (5,5.5) -- (8,2.5) -- (-12,2.5) -- cycle;
\draw[gray, dashed] (-9,10) -- (5,10) -- (8,7) -- (-12,7) -- cycle;
\draw[gray, dashed] (-9,1.5) -- (5,1.5) -- (8,-1.5) -- (-12,-1.5) -- cycle;

\node at (-12,-2) {$(\mathbb{R}^2\times \{ 0 \})\times \{ 1 \}$};
\node at (-12,2) {$(\mathbb{R}^2\times \{ 0 \})\times \{ \frac{1}{2} \}$};
\node at (-12,6.5) {$(\mathbb{R}^2\times \{ 0 \})\times \{ 0 \}$};

\node at (9,0) {$G$};
\node at (9,5) {$F$};
\end{tikzpicture}
\caption{The cobordism $F^\prime$ as a composition of the cobordism $F$ (on the bottom) and the cobordism $G$ (on the top).}
\label{fig:cobFprime}
\end{figure}
\end{proof}

\subsubsection*{Third Reidemeister move}
Finally, we have to prove the invariance of the $\beta$-chains under braid-like third Reidemeister moves. 
Consider the version $R_{3}^\circ$ of the third Reidemeister move in Figure \ref{fig:thirdOmegatreb}.
\begin{figure}[H]
\centering
\begin{tikzpicture}[thick]
\draw[->] (-.5,.866) -- (.5,-.866);
\pgfsetlinewidth{8*\pgflinewidth}
\draw[white] (-1,0) .. controls +(.5,.75) and +(-.5,.75) .. (1,0);
\pgfsetlinewidth{0.125*\pgflinewidth}
\draw[->] (-1,0) .. controls +(.5,.75) and +(-.5,.75) .. (1,0);
\pgfsetlinewidth{8*\pgflinewidth}
\draw[white] (-.5,-.866) -- (.5,.866);
\pgfsetlinewidth{0.125*\pgflinewidth}
\draw[->] (-.5,-.866)-- (.5,.866);
\node at (1.5,.5){$R_{3}^\circ$};
\node at (1.5,0){$\leftrightharpoons$};
\begin{scope}[shift = {+(3,0)}]
\draw[->] (-.5,.866) -- (.5,-.866);
\pgfsetlinewidth{8*\pgflinewidth}
\draw[white] (-1,0) .. controls +(.5,-.75) and +(-.5,-.75) .. (1,0);
\pgfsetlinewidth{0.125*\pgflinewidth}
\draw[->] (-1,0) .. controls +(.5,-.75) and +(-.5,-.75) .. (1,0);
\pgfsetlinewidth{8*\pgflinewidth}
\draw[white] (-.5,-.866) -- (.5,.866);
\pgfsetlinewidth{0.125*\pgflinewidth}
\draw[->] (-.5,-.866)-- (.5,.866);
\end{scope}
\node at (0,-1.5){$L_1$};
\node at (3,-1.5){$L_2$};
\end{tikzpicture}
\caption{A version of the Third Reidemeister move.}
\label{fig:thirdOmegatreb}
\end{figure}
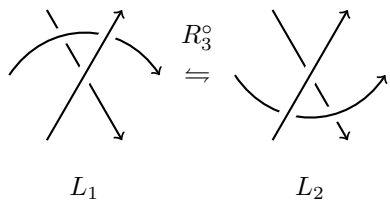
All the braid-like third Reidemeister moves can be deduced, via a sequence of coherent second Reidemeister moves, from the $R_3^\circ$ move (cf. \cite[Lemma 2.6]{Polyak10}). See Figure \ref{fig:fromRplustoRcirc} for an example.
\begin{figure}[H]
\centering
\begin{tikzpicture}[scale =.5, thick]

\draw (1,1) -- (0,1);
\draw (1,0) -- (0,0);
\draw (1,-1) -- (0,-1);

\draw (1,-1) .. controls +(.5,0) and +(-.5,0).. (2,0);
\pgfsetlinewidth{8*\pgflinewidth}
\draw[white] (1,0) .. controls +(.5,0) and +(-.5,0).. (2,-1);
\pgfsetlinewidth{.125*\pgflinewidth}
\draw (1,0) .. controls +(.5,0) and +(-.5,0).. (2,-1);
\draw (1,1) -- (2,1);

\draw (2,1) .. controls +(.5,0) and +(-.5,0).. (3,0);
\pgfsetlinewidth{8*\pgflinewidth}
\draw[white] (2,0) .. controls +(.5,0) and +(-.5,0).. (3,1);
\pgfsetlinewidth{.125*\pgflinewidth}
\draw (2,0) .. controls +(.5,0) and +(-.5,0).. (3,1);
\draw (3,-1) -- (2,-1);

\draw (3,0) .. controls +(.5,0) and +(-.5,0).. (4,-1);
\pgfsetlinewidth{8*\pgflinewidth}
\draw[white] (3,-1) .. controls +(.5,0) and +(-.5,0).. (4,0);
\pgfsetlinewidth{.125*\pgflinewidth}
\draw (3,-1) .. controls +(.5,0) and +(-.5,0).. (4,0);
\draw (3,1) -- (4,1);

\draw (4,1) -- (6,1);
\draw (4,0) -- (6,0);
\draw (4,-1) -- (6,-1);

\begin{scope}[shift = {+(8,0)}]
\draw (0,1) -- (1,1);
\draw (0,0) -- (1,0);
\draw (0,-1) -- (1,-1);

\draw (1,-1) .. controls +(.5,0) and +(-.5,0).. (2,0);
\pgfsetlinewidth{8*\pgflinewidth}
\draw[white] (1,0) .. controls +(.5,0) and +(-.5,0).. (2,-1);
\pgfsetlinewidth{.125*\pgflinewidth}
\draw (1,0) .. controls +(.5,0) and +(-.5,0).. (2,-1);
\draw (1,1) -- (2,1);

\draw (2,1) .. controls +(.5,0) and +(-.5,0).. (3,0);
\pgfsetlinewidth{8*\pgflinewidth}
\draw[white] (2,0) .. controls +(.5,0) and +(-.5,0).. (3,1);
\pgfsetlinewidth{.125*\pgflinewidth}
\draw (2,0) .. controls +(.5,0) and +(-.5,0).. (3,1);
\draw (3,-1) -- (2,-1);

\draw (3,0) .. controls +(.5,0) and +(-.5,0).. (4,-1);
\pgfsetlinewidth{8*\pgflinewidth}
\draw[white] (3,-1) .. controls +(.5,0) and +(-.5,0).. (4,0);
\pgfsetlinewidth{.125*\pgflinewidth}
\draw (3,-1) .. controls +(.5,0) and +(-.5,0).. (4,0);
\draw (4,1) -- (3,1);

\draw (4,1) .. controls +(.5,0) and +(-.5,0).. (5,0);
\pgfsetlinewidth{8*\pgflinewidth}
\draw[white] (4,0) .. controls +(.5,0) and +(-.5,0).. (5,1);
\pgfsetlinewidth{.125*\pgflinewidth}
\draw (4,0) .. controls +(.5,0) and +(-.5,0).. (5,1);
\draw (5,0) .. controls +(.5,0) and +(-.5,0).. (6,1);
\pgfsetlinewidth{8*\pgflinewidth}
\draw[white] (5,1) .. controls +(.5,0) and +(-.5,0).. (6,0); 
\pgfsetlinewidth{.125*\pgflinewidth}
\draw (5,1) .. controls +(.5,0) and +(-.5,0).. (6,0);
\draw (4,-1) -- (6,-1);
\end{scope}

\begin{scope}[shift = {+(8,-4)}]
\draw (5,1) -- (6,1);
\draw (5,0) -- (6,0);
\draw (5,-1) -- (6,-1);

\draw (0,1) -- (1,1);
\draw (4,0) .. controls +(.5,0) and +(-.5,0).. (5,1);
\pgfsetlinewidth{8*\pgflinewidth}
\draw[white] (4,1) .. controls +(.5,0) and +(-.5,0).. (5,0);
\pgfsetlinewidth{.125*\pgflinewidth}
\draw (4,1) .. controls +(.5,0) and +(-.5,0).. (5,0);

\draw (5,-1) -- (4,-1);
\draw (0,-1) .. controls +(.5,0) and +(-.5,0).. (1,0); 
\pgfsetlinewidth{8*\pgflinewidth}
\draw[white] (0,0) .. controls +(.5,0) and +(-.5,0).. (1,-1);
\pgfsetlinewidth{.125*\pgflinewidth}
\draw (0,0) .. controls +(.5,0) and +(-.5,0).. (1,-1);

\draw (1,0) .. controls +(.5,0) and +(-.5,0).. (2,-1);
\pgfsetlinewidth{8*\pgflinewidth}
\draw[white]  (1,-1) .. controls +(.5,0) and +(-.5,0).. (2,0);
\pgfsetlinewidth{.125*\pgflinewidth}
\draw (1,-1) .. controls +(.5,0) and +(-.5,0).. (2,0);
\draw (1,1) -- (2,1);

\draw (2,1) .. controls +(.5,0) and +(-.5,0).. (3,0);
\pgfsetlinewidth{8*\pgflinewidth}
\draw[white] (2,0) .. controls +(.5,0) and +(-.5,0).. (3,1);
\pgfsetlinewidth{.125*\pgflinewidth}
\draw (2,0) .. controls +(.5,0) and +(-.5,0).. (3,1);
\draw (3,-1) -- (2,-1);

\draw (3,0) .. controls +(.5,0) and +(-.5,0).. (4,-1);
\pgfsetlinewidth{8*\pgflinewidth}
\draw[white] (3,-1) .. controls +(.5,0) and +(-.5,0).. (4,0);
\pgfsetlinewidth{.125*\pgflinewidth}
\draw (3,-1) .. controls +(.5,0) and +(-.5,0).. (4,0);
\draw (3,1) -- (4,1);

\end{scope}
\begin{scope}[shift = {+(0,-4)}]
\draw (0,1) -- (1,1);
\draw (5,1) -- (6,1);
\draw (5,0) -- (6,0);
\draw (5,-1) -- (6,-1);

\draw (4,0) .. controls +(.5,0) and +(-.5,0).. (5,1);
\pgfsetlinewidth{8*\pgflinewidth}
\draw[white] (4,1) .. controls +(.5,0) and +(-.5,0).. (5,0);
\pgfsetlinewidth{.125*\pgflinewidth}
\draw (4,1) .. controls +(.5,0) and +(-.5,0).. (5,0);
\draw (5,-1) -- (4,-1);
\draw (0,0) --  (2,0);
\draw (0,-1) --(2,-1);

\draw (1,1) -- (2,1);

\draw (2,1) .. controls +(.5,0) and +(-.5,0).. (3,0);
\pgfsetlinewidth{8*\pgflinewidth}
\draw[white]   (2,0) .. controls +(.5,0) and +(-.5,0).. (3,1);
\pgfsetlinewidth{.125*\pgflinewidth}
\draw (2,0) .. controls +(.5,0) and +(-.5,0).. (3,1);
\draw (3,-1) -- (2,-1);

\draw (3,0) .. controls +(.5,0) and +(-.5,0).. (4,-1);
\pgfsetlinewidth{8*\pgflinewidth}
\draw[white] (3,-1) .. controls +(.5,0) and +(-.5,0).. (4,0);
\pgfsetlinewidth{.125*\pgflinewidth}
\draw (3,-1) .. controls +(.5,0) and +(-.5,0).. (4,0);
\draw (3,1) -- (4,1);

\end{scope}
\draw[dashed] (-.25,-5.25) rectangle (6.25,1.25);
\node at (3,-2) {$\downharpoonleft\hspace{-2.5pt} \upharpoonright$};
\node at (7,0) {$\leftrightharpoons$};
\node at (7,-4) {$\leftrightharpoons$};
\node at (11,-2) {$\downharpoonleft\hspace{-2.5pt} \upharpoonright$};
\end{tikzpicture}
\caption{How to recover another braid like third Reidemeister move (in the dashed box) with an $R_3^\circ$ and two coherent second Reidemeister moves.}
\label{fig:fromRplustoRcirc}
\end{figure}
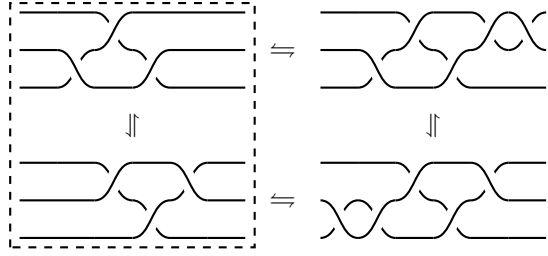

Chain maps between the geometric complexes associated to $R_3^{\circ}$ have been described explicitly by Mackaay and Vaz. Each of these maps is defined as the composition of two maps. First, one defines an element $Q \in \mathbf{Kom}(\mathbf{Foam}_{/\ell})$ which is \emph{not} the geometric complex associated to a link. Then, one defines chain maps
\[ F_i : \langle D_i \rangle_{\omega} \longrightarrow Q\qquad G_{i}: Q \longrightarrow \langle D_i \rangle_{\omega},\]
where $ i\in \{1,2\}$, and  $D_1$ and $D_2$ are the diagrams on each side of the $R_3^\circ$ move (Figure \ref{fig:thirdOmegatreb}), such that $F_i$ is the up-to-homotopy inverse of $G_i$. For a description of such maps the reader may refer to Figure \ref{fig:invterzasl3a} (cf. \cite{Mackaayvaz07}).

The important thing that the reader should keep in mind is that for each web resolution $\underline{w}$ of $D_i$ such that the crossings involved in $R_3^\circ$ are resolved as in the oriented resolution, there is the same direct summand in $Q$, and that the restriction of either $G_i$ or $F_i$ to $\underline{w}$ is minus the identity cobordism.

Finally, the maps associated to each direction of $R_3^\circ$ (between the Kuperberg brackets) 
\[\widetilde{\Psi}_{3}: \langle D_1 \rangle_{\omega} \longrightarrow \langle D_2 \rangle_{\omega} \qquad \widetilde{\Phi}_{3}: \langle D_2  \rangle_{\omega} \longrightarrow \langle D_1   \rangle_{\omega}\]
are defined as follows
\[ \widetilde{\Psi}_3 = G_2 \circ F_1 \qquad \widetilde{\Phi}_3 = G_1 \circ F_2.\]
It is immediate that these two maps, when restricted to the oriented web resolutions, are cylinders. Denote by $\Psi_3$ and $\Phi_3$ the maps between $\mathfrak{sl}_3-$complexes associated to $\widetilde{\Psi}_3$ and $\widetilde{\Phi}_3$, respectively. Then, maps $\Psi_3$ and $\Phi_3$ behave as the identity maps between the summands associated to the oriented web resolutions. So the following proposition is immediate.

\begin{proposition}
Let $D_1$ and $D_2$ be two oriented link diagrams related by a coherent third Reidemeister move. Then,
\[
\Psi_3 (\beta_{\omega,\: x_1}(D_1)) =\beta_{\omega,\: x_1}(D_2)\quad\text{and}\quad \Phi_3 (\beta_{\omega,\: x_1}(D_2)) = \beta_{\omega,\: x_1}(D_1).
\]\qed
\end{proposition}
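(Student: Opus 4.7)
The plan is to leverage the fact, emphasised in the paragraphs just before the statement, that both chain maps $\Psi_3$ and $\Phi_3$ act as the identity on the direct summand corresponding to the oriented web resolution, and that the $\beta$-chains live entirely in this summand.

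First, I would observe that in the local ball where the $R_3^\circ$ move takes place, the oriented smoothings of $D_1$ and $D_2$ both consist of three coherently oriented arcs, each crossing being resolved as \orisplittext. Thus $\underline{w}_{D_1}$ and $\underline{w}_{D_2}$ agree outside the ball and are related by a planar isotopy inside, and so are canonically identified as webs. Under this identification the families of Jordan disks $\{\mathbb{D}_\gamma\}$ used to define the two $\beta$-chains correspond, so that $\beta_{\omega, x_1}(D_1) \in T(\underline{w}_{D_1})$ and $\beta_{\omega, x_1}(D_2) \in T(\underline{w}_{D_2})$ are matched by the induced isomorphism of summands.

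Next, I would combine the factorisations $\widetilde{\Psi}_3 = G_2 \circ F_1$ and $\widetilde{\Phi}_3 = G_1 \circ F_2$ with the property recalled above: each of the four maps $F_1, F_2, G_1, G_2$ restricts to minus the identity cobordism on the common summand of the auxiliary complex $Q$ indexed by the oriented web resolution. Two copies of $-\mathrm{Id}$ compose to $+\mathrm{Id}$, so on the relevant summands both $\Psi_3$ and $\Phi_3$ act as the identity (once the canonical identification of webs above is taken into account).

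Finally, since $\beta_{\omega, x_1}(D_i)$ is supported entirely in $T(\underline{w}_{D_i}) \subseteq C_\omega^0(D_i,R)$ and this summand is sent identically to the corresponding summand on the other side, the equalities in the statement follow immediately. The only point that requires any work — and so is the potential obstacle — is verifying the $-\mathrm{Id}$ behaviour of $F_i$ and $G_i$ on the oriented resolution summand; this reduces to reading off the relevant matrix entry from Figure \ref{fig:invterzasl3a} and checking that, under the fixed sign assignment used to build $Q$ and the brackets $\langle D_i\rangle_\omega$, the foam in question is indeed the identity cylinder with a negative sign. Once this bookkeeping is done, no further foam calculation (in particular, no analogue of Lemma \ref{lemma:dbetavanish}) is needed, since the $\beta$-chain never meets the summands of $Q$ where the non-trivial matrix entries of $F_i$ and $G_i$ live.
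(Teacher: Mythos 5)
Your proposal is correct and follows essentially the same route as the paper: both arguments rest on the factorisations $\widetilde{\Psi}_3 = G_2\circ F_1$ and $\widetilde{\Phi}_3 = G_1\circ F_2$ together with the fact that each of $F_i$, $G_i$ restricts to minus the identity cobordism on the summand indexed by the oriented web resolution, so that the compositions act as the identity on $T(\underline{w}_{D_i})$, where the $\beta$-chain is supported. The paper treats the proposition as immediate from exactly this observation, so no comparison beyond this is needed.
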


\begin{proof}[Proof of  Theorem \ref{theorem:beta_3}]
Theorem \ref{theorem:beta_3} follows immediately by putting together the results concerning the behaviour of $\beta_{\omega,\: x_1}$ under the maps induced by coherent Reidemeister moves and negative first Reidemeister moves, and the computation of the degrees after the definition of $\beta_{\omega,x_{i}}$.
\end{proof}

We will call $\beta_{\omega,\: x_1}(\overline{B})$ the \emph{$\beta_3$-invariant of $B$ associated to $(\omega, x_1)$}.

\begin{rem*}
The $\psi_3$-invariant introduced by Wu in \cite{Wu08} is a special case of our construction, more precisely  $\psi_3$ is the homology class of the $\beta_3$-invariant associated to $(x^3, 0)$.
\end{rem*}

\section{Auxiliary invariants}

In the previous section, we already proved the existence of transverse invariants in the $\mathfrak{sl}_3$-chain complex obtained from a factorisable potential (i.e. a potential admitting at least a root in the base ring). At this point a natural question arises: how much information do these invariants contain? That is, are these invariants effective? Unfortunately we do not have an explicit answer to this question. This is partly due to the lack of sufficiently simple (non-trivial) examples on which to perform the computations, and also due to the fact that these invariants are, in some sense, difficult to handle. More precisely, they are chains in a diagram-dependent chain complex, and to prove that two braids have distinct invariants one has to prove that there \emph{does not exist} a map (induced by sequences of Reidemeister/Markov moves between the closures of the two braids) sending the $\beta_3$-invariants of one braid to the corresponding $\beta_3$-invariants of the other braid. Proving this is not easy in general. The most natural thing to do in this setting is to prove that the homology classes of the $\beta_3$-invariants associated to the two braids behave differently with respect to a given structure on  the $\mathfrak{sl}_3$-homology, which is preserved by the maps induced by sequences of Reidemeister/Markov moves. In this section we shall present some ways to extract information from the homology classes of the $\beta_3$-invariants making use of the $R$-module structure of the $\mathfrak{sl}_3$-homology.

\subsection{The vanishing of the homology class}

The most basic structure on  the $\mathfrak{sl}_3$-homology is that of an $R$-module.
The simplest way to make use of this structure is to look at the vanishing of the homology class of the $\beta_{3}$-invariants. That is, if one braid has a $\beta_3$-invariant whose homology class vanishes while the other braid does not, then the two braids represent distinct transverse links.

For the sake of simplicity, let us assume throughout this section the potential to be completely factorisable in $R$ (i.e. all its roots are in $R$), and $R = \mathbb{F}$ to be a field. Under these hypotheses we can prove Proposition \ref{prop:vanishing}.

\begin{proof}[Proof of Proposition \ref{prop:vanishing}]
Before going into the details of the proof, which is quite long, it is worth to schematically describe the idea behind it. We wish to make use of the Mackaay-Vaz classification of the isomorphism types of $H_{\omega}^{\bullet}(L,\mathbb{F})$ depending on the multiplicity of the roots of $\omega$ (see \cite{Mackaayvaz07}). This classification, as well as its generalisation due to Rose and Wedrich in \cite{RoseWedrich16}, is some sort of generalisation of the Chinese remainder theorem (which can be thought as the case $L= \bigcirc$). In each of the three cases we shall identify the image of the $\beta$-cycles under the isomorphism which describes the isomorphism type of $H^\bullet_{\omega}$. This identification will give us the desired result.

If the potential admits distinct roots in $\mathbb{F}$, then the homology classes of the corresponding $\beta_3$-invariants are linearly independent by an argument essentially due to Gornik (cf. \cite{Gornik} and \cite[Section 3]{Mackaayvaz07}). More precisely, in this case the $\beta$-cycles are a rescaling of some of the so-called ``canonical generators''. Since the independence of the homology classes of the ``canonical generators'' was proved in \cite{Gornik} and \cite{Mackaayvaz07}, the claim follows in this case. 

\begin{rem*}\label{rem:gornillobbmackay}
The proofs in \cite{Gornik} and \cite{Mackaayvaz07} only concern the case $\mathbb{F} = \mathbb{C}$. However, the proof of the independence works over any integral domain $R$, provided that $\omega$ has all roots in $R$. 
\end{rem*}

Let us shift to the case when potential $\omega$ has a double root $x_1$, and a simple root $x_2$. That is $x_{1} = x_3$. In \cite[Theorem 3.18]{Mackaayvaz07} it was proved that
\begin{equation}
\label{eq:decomposition_of_Homega}
 H_{\omega}^{i}(L,\mathbb{F}) \cong \bigoplus_{L^\prime \subseteq L} Kh^{-i-lk(L^\prime,\: L\setminus L^\prime)}(\overline{L^\prime},\mathbb{F}),
\end{equation}
where $L^\prime \subseteq L$ means that $L^\prime$ ranges among the sub-links of $L$, and $Kh$ denotes the original ($\mathfrak{sl}_2$-)Khovanov homology. 
\begin{rem*}
Some remarks are in order: 
\begin{enumerate}[(a)]
\item the empty sub-link and $L$ are also counted among the sub-links;
\item in \cite{Mackaayvaz07} the isomorphism in \eqref{eq:decomposition_of_Homega} is proved only for $\mathbb{F}=\mathbb{C}$. The proof works without change for any field, provided that $\omega$ has all roots in $\mathbb{F}$. 
\item the isomorphism above is \emph{not} canonical but depends on a number of choices;
\item the isomorphism in \eqref{eq:decomposition_of_Homega} is a slight rephrasing of the statement of \cite[Theorem 3.18]{Mackaayvaz07}. More precisely, the statement of \cite[Theorem 3.18]{Mackaayvaz07} reads
\[ U_{a,b,c}^{i}(L,\mathbb{F}) \cong \bigoplus_{L^\prime \subseteq L} KH^{i-lk(L^\prime,\: L\setminus L^\prime)}(L^\prime,\mathbb{F}),\]
where $U_{a,b,c}^{i}$ denotes $H_{\omega}^{i}$ and $KH$ denotes a theory which is equivalent to Khovanov homology. To be precise, $KH$ denotes the Khovanov-Rozansky $\mathfrak{sl}_2$-homology with the homological degree reversed, that is $KH^{i,j}(K,\mathbb{F}) = Kh^{-i,j}(\overline{K},\mathbb{F})$. To see this, the reader can compare \eqref{eq:decomposition_of_Homega} and \cite[Theorem 3.18]{Mackaayvaz07} with the analogous (more general) result \cite[Theorem 1]{RoseWedrich16} and subsequent examples (cf. the conventions in \cite{Khovanov03, KhovanovRozansky05a} and the results in \cite{Mackaayvaz07a}).
\end{enumerate}
\end{rem*}

Items (1) and (2) in the statement shall follow from a careful inspection of the proof of \cite[Theorem 3.18]{Mackaayvaz07}. In order to keep the paper as self-contained as possible, we shall review the crucial steps of the proof.

Let $D$ be an oriented diagram representing an oriented link $L$. A \emph{colouring of} $D$ is a function $\phi$ associating to each arc of $D$ (seen as a graph) a root of $\omega$. A colouring $\phi$ is \emph{compatible} with a web resolution $\underline{w}$ if we can colour all the thick edges of $\underline{w}$ in such a way that at each vertex the set of colours is the set of roots of $\omega$. 
Denote by $\mathbb{W}_{\phi}(D)$ the set of resolutions compatible with a colouring $\phi$.

\begin{rem*}
Given a web resolution $\underline{w}$ compatible with $\phi$, the colour of each thick edge is uniquely determined.
\end{rem*}

Mackaay and Vaz proved that one can associate a sub-complex $C_{\omega}^{\bullet}(\phi ,\mathbb{F})$ to each colouring $\phi$. Furthermore, the complex $C_{\omega}^\bullet (D,\mathbb{F})$ decomposes as the direct sum of these complexes. It turns out that the homology of $C_{\omega}^{\bullet}(\phi ,\mathbb{F})$ is trivial unless $\phi$ assigns the same colour to all arcs belonging to the same components (\emph{proper colouring}). Finally, one proves that for such colourings
\[ H_{\omega}^{\bullet}(\phi ,\mathbb{F}) \cong Kh^{-\bullet-lk(L_\phi,\: L\setminus L_\phi)}(\overline{L_{\phi}},\mathbb{F}),\]
where $L_\phi$ is the sub-link of $L$ corresponding to the sub-diagram of $D$ obtaned by deleting all arcs whose colour with respect to $\phi$ is $x_1$. Furthermore, to each $\underline{w}\in\mathbb{W}_{\phi}(D)$ and $\phi$ proper colouring, it is possible to associate a cycle $\Sigma_{\phi}(\underline{w})\in T(\underline{w})$ called \emph{canonical generator}\footnote{Even though they are not canonically defined!}. For each proper colouring $\phi$ the set $\{[\Sigma_{\phi}(\underline{w})]\}_{\underline{w}\in\mathbb{W}_{\phi}(D)}$ generates the $\mathbb{F}$-vector space $H_{\omega}^{\bullet}(\phi ,\mathbb{F})$.

Denote by $\phi_{i}$ the colouring of $D$ where all arcs are coloured with $x_i$, for $i\in \{1,2\}$. 
The sub-links associated to $\phi_1$ and $\phi_2$ are the whole link $L$ and the empty sub-link $\emptyset$, respectively. 
The sub-modules generated by $\beta_{\omega,x_{1}}(D)$ and $\beta_{\omega,x_{2}}(D)$ are contained in $C_{\omega}^\bullet (\phi_1,\mathbb{F})$ and $C_{\omega}^\bullet (\phi_2,\mathbb{F})$ respectively. Thus, the homology classes of $\beta_{\omega,x_{1}}(D)$ and $\beta_{\omega,x_{2}}(D)$ are mapped to $Kh^{0}(\overline{L},\mathbb{F})$ and $Kh^{0}(\emptyset,\mathbb{F}) = \mathbb{F}$, respectively, by the isomorphism in \eqref{eq:decomposition_of_Homega}.

\begin{rem*}
The above reasoning proves, in particular, that the homology classes of $\beta$-invariants corresponding to different roots (if non-vanishing) are always linearly independent.
\end{rem*}

By inspecting the argument in \cite{Mackaayvaz07}, it is not difficult to see that the image of the homology class of $\beta_{\omega,x_{2}}(D)$ is non-trivial. It is immediate from the definition of the canonical generators that $\beta_{\omega,x_{2}}(D)$ is $(x_1 - x_2)^r\Sigma_{\phi_2}(\underline{w}_{D})$ for a certain $r$, where $\underline{w}_D$ denotes the oriented web resolution of $D$. Thus, the homology class of $\beta_{\omega,x_{2}}(D)$ generates $H_{\omega}^{\bullet}(\phi_2,\mathbb{F}) \cong \mathbb{F}$, and this concludes the proof of item (1).

\begin{rem*}
An alternative proof of the non-vanishing of $[\beta_{\omega,x_{2}}(D)]$ can be found in \cite[Proposition 1.3]{LewarkLobb18}; the cycle ``$\psi(D)$''\footnote{Not to be confused with Plamenevskaya's $\psi$-invariant.} defined in \cite{LewarkLobb18} (for $n=3$) coincide with $\beta_{x^3 - x^2,1}(D)$. Moreover, the same argument used in \cite{LewarkLobb18} to prove that $[\beta_{x^3 - x^2,1}(D)]$ is always non-trivial, can be adapted to the case of an arbitrary (degree $3$) potential with a double and a single root (cf. third paragraph in \cite[Section 4]{LewarkLobb18}).
\end{rem*}

It takes a bit more care to prove that the image of $[\beta_{\omega,x_1}(D)]$ is (a non-zero multiple of) the Plamenevskaya invariant $\psi$. First notice that $\beta_{\omega,x_1}(B)$ is $P\cdot \Sigma_{\phi_1}(\underline{w}_{D})$ where
\[P = \prod_{\gamma \in \underline{w}_{D}} (x_2 - x_1)(X_{\gamma} - x_1) \in R_{\phi_{1}}(\underline{w}_{D}) = \frac{\mathbb{F}[X_{\gamma}\: \vert\: \gamma \in \underline{w}_{D}]}{\langle \omega(X_{\gamma})\: \vert\: \gamma \in \underline{w}_{D} \rangle},\] 
and $X_{\gamma}$ acts on $T(\underline{w}_{D})$ by adding a dot on the regular region bounding $\gamma$. This is implied by the equality
\[ (X-x_1)\left[ (x_2 - x_1)^2 - (X-x_1)^2 \right] \equiv (x_2 - x_1)(X-x_1)(X - x_2)\qquad \mathrm{mod} \:\: \omega(X)\]
which is easily verified. Then, the chain map in \cite[Equation (17)]{Mackaayvaz07} which defines the isomorphism between $H_{\omega}^{\bullet}(\phi ,\mathbb{F})$ and $ Kh^{-\bullet-lk(L_\phi,\: L\setminus L_\phi)}(\overline{L_{\phi}},\mathbb{F})$, behaves as follows
\[\beta_{\omega,x_1}(B) = P\cdot \Sigma_{\phi_1}(\underline{w}_{D}) \mapsto (x_2 - x_1)^{r} X  \otimes \cdots \otimes X\quad \text{for some }r\in \mathbb{N},  \]
where $X \otimes \cdots \otimes X =\widetilde{\psi}$ belongs to the summand associated to the oriented resolution in the Khovanov chain complex. Since,  $ [\widetilde{\psi}] = \psi$, item (2) follows.

Finally, assume the potential $\omega$ has a triple root. Consider the endofunctor $\Phi$ of $\mathbf{Foam}$ described in Figure \ref{fig:endofunctor}. $\Phi$ translates the local relations $\ell_1$ associated to the potential $(x-x_1)^3$ into the local relations $\ell_0$ associated to the potential $x^3$. It follows that $\Phi$ induces an equivalence of $\mathbb{F}$-linear categories between $\mathbf{Foam}_{/\ell_1}$ and $\mathbf{Foam}_{/\ell_0}$. This equivalence of categories induces an isomorphism between the two complexes $C_{\omega}^{\bullet}$ and $C_{x^{3}}^{\bullet}$.
\begin{figure}
\centering
\begin{tikzpicture}[scale =.75]
\draw[dashed] (0,0) circle (1);
\draw[fill] (0,0) circle (.1);
\node at (2,0) {$\longmapsto$};
\draw[dashed] (4,0) circle (1);
\draw[fill] (4,0) circle (.1);
\node at (5.5,0) {$-$};
\node at (6,0) {$x_1$};
\draw[dashed] (7.5,0) circle (1);
\end{tikzpicture}
\caption{}\label{fig:endofunctor}
\end{figure}
Finally, it is immediate to see that this isomorphism sends the $\beta_3$-invariant to the $\psi_3$-invariant. 
\end{proof}

Proposition \ref{prop:vanishing}, together with the results in \cite{TransFromKhType17}, implies the following.

\begin{corollary}
Let $\mathbb{F}$ be a field of characteristic different from $2$. If $\omega$ has a double root $x_{1}\in \mathbb{F}$, then the vanishing of $[\beta_{\omega,x_{1}}(\overline{B})]$ is a non-effective invariant for all $B$'s representing a knot with less than 12 crossings.\qed
\end{corollary}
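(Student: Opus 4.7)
The plan is to reduce the statement to the corresponding non-effectiveness result for the Plamenevskaya $\psi$-invariant, which is already documented in the literature, and then quote it.

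First, I would invoke Proposition \ref{prop:vanishing}(2): since $x_1$ is a double root of $\omega$ and $\mathbb{F}$ is a field, for every braid $B$ the homology class $[\beta_{\omega,x_1}(\overline{B})]$ vanishes in $H_{\omega}^\bullet(\overline{B},\mathbb{F})$ if and only if the Plamenevskaya invariant $\psi(B)$ vanishes in $Kh^\bullet(B,\mathbb{F})$. Consequently, the transverse invariant ``$[\beta_{\omega,x_1}(\overline{\,\cdot\,})]$ vanishes'' and the transverse invariant ``$\psi(\,\cdot\,)$ vanishes'' coincide as $\mathbb{F}$-valued invariants on the set of transverse links. In particular, one of them is effective for a family of transverse links if and only if the other is.

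Second, I would appeal to the analysis carried out in \cite{TransFromKhType17} (building on \cite{Plamenevskaya06}), where it is shown that for every pair of transverse representatives of a knot of crossing number at most $11$ having equal self-linking numbers, the vanishing of $\psi$ fails to distinguish them, provided the characteristic of the coefficient field is different from $2$. (The characteristic hypothesis is needed because Plamenevskaya's invariant is $2$-torsion sensitive; the results in \cite{TransFromKhType17} explicitly handle all deformations and all characteristics $\neq 2$.) This is the main non-trivial input, and it is the only step where the crossing number hypothesis enters; essentially every known pair of transverse knots with the same classical invariants in this range is either related by a negative flype or lies in one of the families already treated there, and in each case one verifies directly that $\psi$ vanishes on both representatives, or on neither.

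Combining these two observations yields the corollary: given $B_1$ and $B_2$ two braids representing the same knot $K$ with at most $11$ crossings and with $sl(B_1)=sl(B_2)$, the equivalence from the first step shows that $[\beta_{\omega,x_1}(\overline{B_1})]$ and $[\beta_{\omega,x_1}(\overline{B_2})]$ have the same vanishing behaviour as $\psi(B_1)$ and $\psi(B_2)$, which by the second step agree. The only potential obstacle is purely bibliographic: making sure that the statement from \cite{TransFromKhType17} is quoted in a form that covers every knot of crossing number $\leq 11$ and every field of characteristic $\neq 2$, rather than just $\mathbb{Q}$ or $\mathbb{F}_p$ individually. Since the argument in \cite{TransFromKhType17} is characteristic-independent away from $2$, this is a matter of reformulation rather than new content.
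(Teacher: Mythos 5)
Your proposal is correct and is essentially identical to the paper's argument: the corollary is stated there with no separate proof precisely because it follows by combining Proposition \ref{prop:vanishing}(2) (vanishing of $[\beta_{\omega,x_1}(\overline{B})]$ for a double root is equivalent to vanishing of Plamenevskaya's $\psi(B)$) with the non-effectiveness results for $\psi$ in \cite{TransFromKhType17}. Your additional remarks about where the characteristic $\neq 2$ and crossing-number hypotheses enter are accurate and consistent with how the paper uses that reference.
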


\subsection{Divisibility, numerical invariants and Bennequin inequalities}\label{sec:div}

There are also other ways to make use of the $R$-module structure of the $\mathfrak{sl}_3$-homology. Let us recall the definition of the $c$-invariants. Let $R$ be an integral domain and $a\in R\setminus\{ 0\}$ a non-unit element. Given a potential $\omega$ and a root $x_1\in R$, the number
\[c_{\omega, x_1}(B;a) = max\left\{ k\:\vert\: \exists\: [y]\in H^{0}_{\omega}(\overline{B})\:\text{such that}\: a^k[y] = [\beta_{\omega, x_1}(\overline{B})]\right\}\in \mathbb{N}\cup \{ \infty \}\]
is a well-defined transverse invariant, where $c_{\omega,x_1}(B;a)=\infty$ if and only if $[\beta_{\omega, x_1}(\overline{B})]$ is trivial or $a$-torsion. These invariants are particularly useful in the case $\omega$ has only single roots: in this case the homology classes of the $\beta_3$-invariants are non-trivial and non-torsion (cf. Remark \ref{rem:gornillobbmackay}). Now assume
\begin{equation} 
R = \mathbb{F}[U]\quad \text{and}\quad \omega(x) = (x- U x_1)(x- U x_2)(x- Ux_3),
\label{eq:setting}
\end{equation}
where $x_{i} \in \mathbb{F}$, for all $i$, and $x_{i} \ne x_{j}$, if $i\ne j$.
By setting $deg(U) =2$ the theory becomes graded, and we also have the following exact sequences of complexes of $\mathbb{F}$-vector spaces
\begin{equation}
0 \longrightarrow C_{\omega}^{\bullet,q}(\widehat{B},\mathbb{F}[U]) \overset{U\cdot}{\longrightarrow} C_{\omega}^{\bullet,q}(\widehat{B},\mathbb{F}[U]) \overset{\pi_0}{\longrightarrow} C_{x^{3}}^{\bullet,q}(\widehat{B},\mathbb{F}) \to 0
\label{eq:exactsequence1}
\end{equation}
for each $q\in \mathbb{Z}$, and
\begin{equation}
0 \longrightarrow C_{\omega}^{\bullet}(\widehat{B},\mathbb{F}[U]) \overset{(U-1)\cdot}{\longrightarrow} C_{\omega}^{\bullet}(\widehat{B},\mathbb{F}[U]) \overset{\pi_1}{\longrightarrow} C_{\omega_{\vert U = 1}}^{\bullet}(\widehat{B},\mathbb{F}) \to 0.
\label{eq:exactsequence2}
\end{equation}
Moreover, it is immediate that
\[\pi_{0}(\beta_{\omega,x_{i}}(\overline{B})) = \beta_{x^{3},0}(\overline{B})\qquad\qquad \pi_{1}(\beta_{\omega,x_{i}}(\overline{B})) = \beta_{\omega_{\vert U = 1},x_{i}}(\overline{B}).\]
The following proposition follows immediately from \eqref{eq:exactsequence1}.
\begin{proposition}
Let $R$, $\omega$ and $x_{i}$ be as above, then the following are equivalent
\begin{enumerate}
\item $\psi_{3}(B) \ne 0$;
\item $c_{\omega, Ux_{i}}(B;U) = 0$ for any choice of $\omega$ and $x_{i}$;
\item $c_{\omega, Ux_{i}}(B;U) = 0$ for all choices of $\omega$ and $x_{i}$; 
\end{enumerate}
for each braid $B$.\qed
\end{proposition}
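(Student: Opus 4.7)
The plan is to deduce everything from the long exact sequence in homology associated to the short exact sequence \eqref{eq:exactsequence1}. Applying the snake lemma yields
\[
 \cdots \to H_{\omega}^{\bullet}(\overline{B}) \overset{U\cdot}{\to} H_{\omega}^{\bullet}(\overline{B}) \overset{(\pi_0)_{*}}{\to} H_{x^{3}}^{\bullet}(\overline{B}) \overset{\delta}{\to} H_{\omega}^{\bullet+1}(\overline{B}) \to \cdots
\]
(the grading shifts are irrelevant for the present argument). Exactness at the middle term means precisely that an element of $H_{\omega}^{\bullet}(\overline{B})$ lies in the image of multiplication by $U$ if and only if it is killed by $(\pi_0)_{*}$.

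The key observation, already stated right before the proposition, is that $\pi_0$ sends the chain $\beta_{\omega,Ux_i}(\overline{B})$ to $\beta_{x^3,0}(\overline{B})$, whose homology class is by definition Wu's invariant $\psi_3(B)$. Thus $(\pi_0)_{*}[\beta_{\omega,Ux_i}(\overline{B})] = \psi_3(B)$, and by exactness
\[
 \psi_3(B) \ne 0 \quad \Longleftrightarrow \quad [\beta_{\omega,Ux_i}(\overline{B})] \notin U\cdot H^{0}_{\omega}(\overline{B}).
\]
The right-hand side is, by the very definition of the $c$-invariant, equivalent to $c_{\omega,Ux_i}(B;U)=0$. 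Crucially, this equivalence holds for every admissible choice of $\omega$ and $x_i$ as in \eqref{eq:setting}, since the left-hand side does not depend on these choices.

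Having established this, the proposition is immediate: the equivalence (1)$\Leftrightarrow$($c_{\omega,Ux_i}(B;U)=0$), valid for an arbitrary fixed triple $(\omega,x_i)$, implies at once both (1)$\Rightarrow$(3) (since the right-hand side does not depend on the choice) and (2)$\Rightarrow$(1) (since a single vanishing instance suffices), while (3)$\Rightarrow$(2) is tautological. There is no real obstacle: the whole content is that the connecting homomorphism of the $U$-multiplication long exact sequence identifies the $U$-divisibility of $[\beta_{\omega,Ux_i}(\overline{B})]$ with the vanishing of $\psi_3(B)$.
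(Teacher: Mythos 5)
Your argument is correct and is exactly the paper's intended route: the paper simply asserts that the proposition ``follows immediately from \eqref{eq:exactsequence1}'', and your long exact sequence argument (exactness at the middle term identifies $U$-divisibility of $[\beta_{\omega,Ux_i}(\overline{B})]$ with the vanishing of $(\pi_0)_*[\beta_{\omega,Ux_i}(\overline{B})]=\psi_3(B)$) is the natural way to spell that out. No gaps.
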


Now let us recall a few facts about concordance invariants defined from $\mathfrak{sl}_{3}$-link homologies. These invariants were defined in the more general setting of the deformations of $\mathfrak{sl}_{n}$ Khovanov-Rozansky (KR) homologies. However, the statements here shall be restricted to the case $n=3$. It is worth noticing that the deformation of $\mathfrak{sl}_{3}$ KR homology corresponding to the potential $\omega$ coincides with the theory defined in this paper (cf. \cite{Mackaayvaz07a}).

\begin{theorem}[Theorem 1.1, \cite{LewarkLobb17}]\label{thm:jis}
Let $K$ be a knot. Given a potential $\omega\in\mathbb{F}[x]$ with distinct roots $x_1$, $x_2$ and $x_3\in \mathbb{F}$, then we have the following isomorphism of bi-graded vector spaces;
\[Gr^{*}H_{\omega}^{\bullet}(K,\mathbb{F}) \simeq \bigoplus_{i=1}^{3} \mathbb{F}(0,j_{i})\qquad\text{with } j_1 \leq j_2 \leq j_3,\]
where $Gr^*H_{\omega}^\bullet$ is the associated graded object of $H_{\omega}^{\bullet}(K,\mathbb{F})$ (endowed with the quantum filtration), and $\mathbb{F}(h,q)$ is a copy of $\mathbb{F}$ generated in bi-degree $(h,q)$. Furthermore, the $j_{i}$'s are concordance invariants which provide lower bounds to the value of the slice genus.\qed
\end{theorem}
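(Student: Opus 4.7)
The plan is to imitate the Lee-Rasmussen template for the $s$-invariant, adapted to the $\mathfrak{sl}_3$-setting: carry out an ungraded computation of $H_\omega^\bullet(K,\mathbb{F})$, identify three filtered generators producing the $j_i$'s, and run a cobordism argument packaging concordance invariance with a slice-genus bound. For the first step I would apply the ``proper colouring'' decomposition of Mackaay-Vaz reviewed in the proof of Proposition \ref{prop:vanishing}: when $\omega$ has three distinct roots $x_1,x_2,x_3$, every proper colouring of $K$ is constant (one of three choices, one per root), and the associated subcomplex $C_{\omega}^\bullet(\phi_i,\mathbb{F})$ has homology of rank one, generated by the canonical generator $\Sigma_{\phi_i}(\underline{w}_D)$ in homological degree $0$. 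Hence $H_{\omega}^\bullet(K,\mathbb{F}) \cong \mathbb{F}^3$, concentrated in homological degree zero.

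Next I would define $j_i$ to be the quantum filtration level of $[\Sigma_{\phi_i}(\underline{w}_D)]$, ordered so that $j_1 \leq j_2 \leq j_3$. This is well-defined once one verifies that the filtered chain homotopy equivalences between the complexes associated to different diagrams for $K$, provided by Theorem \ref{theorem:invarianceofthegeometriccomplex} together with the filtered structure of Subsection \ref{sec:grading}, send canonical generators to canonical generators up to multiplication by an element of $\mathbb{F}^\times$ and an error term of strictly lower quantum filtration. Since the three classes $[\Sigma_{\phi_i}]$ are linearly independent (cf.\ the Gornik-style argument cited in the proof of Proposition \ref{prop:vanishing}), the associated graded is exactly $\bigoplus_i \mathbb{F}(0,j_i)$.

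For concordance invariance and the slice-genus bound I would use the functoriality of Theorem \ref{theorem:invarianceofthegeometriccomplex}. An oriented cobordism $\Sigma \subset \mathbb{R}^3 \times [0,1]$ from $K_0$ to $K_1$ induces a filtered map whose quantum shift is read off from $deg_{\mathbf{Foam}}(F) = -2\chi(F) + \chi(\partial F) + 2d$ combined with the degree convention of the tautological functor. Colouring $\Sigma$ compatibly with a chosen $\phi_i$ on its boundary, one checks that the induced map sends the canonical generator for $K_0$ to a non-zero scalar multiple of the canonical generator for $K_1$ modulo lower-filtration terms; this produces an inequality of the form $j_i(K_1) \leq j_i(K_0) + c\,\chi(\Sigma)$ for an explicit constant $c$. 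For a concordance ($\chi = 0$) the $j_i$ agree; applying the argument to a surface in $B^4$ bounding $K$ and capping off with a disk yields the slice-genus bound.

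The main obstacle is the last claim: proving that cobordism-induced maps actually send canonical generators to canonical generators (up to a non-zero scalar and lower-filtration error) rather than vanishing on them. This reduces to an analysis of birth, death, and saddle cobordisms analogous to the Lee-Rasmussen calculation, but considerably more delicate because of the theta-foam evaluations present in the $\mathfrak{sl}_3$-theory. Controlling the exact filtration shifts coming from the dot-reduction relation, and checking that the leading-order term of each elementary cobordism map preserves the canonical generator associated to a fixed colouring, is the technical heart of the argument.
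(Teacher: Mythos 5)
First, a point of order: the paper does not prove this statement. Theorem \ref{thm:jis} is imported verbatim from \cite{LewarkLobb17} (it is literally labelled ``Theorem 1.1, \cite{LewarkLobb17}'' and closed with a \qed), so there is no internal proof to compare yours against; the honest comparison is with the strategy of Lewark--Lobb, and your outline does follow the standard Gornik--Rasmussen template that underlies their argument. Your first step is correct: for a knot and a potential with three distinct roots, only the three constant colourings are proper, each contributes a rank-one summand generated by $\Sigma_{\phi_i}(\underline{w}_D)$ in homological degree $0$, so $H_\omega^\bullet(K,\mathbb{F})\cong\mathbb{F}^3$ concentrated in degree $0$. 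Your second step is actually easier than you make it: once the homology is three-dimensional and concentrated in homological degree $0$, the associated graded of the quantum filtration is automatically $\bigoplus_i\mathbb{F}(0,j_i)$ for some $j_1\leq j_2\leq j_3$, and since the Reidemeister chain homotopy equivalences and their homotopy inverses are filtered of degree $0$ (as remarked after Theorem \ref{theorem:invarianceofthegeometriccomplex} and in the discussion of the quantum filtration), the filtered isomorphism type of $H^0_\omega(K,\mathbb{F})$ is a knot invariant; no tracking of canonical generators under Reidemeister moves is needed for well-definedness of the $j_i$.

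The genuine gap is exactly where you flag it, and flagging it does not fill it. The entire content of the concordance-invariance and slice-genus statements is the claim that a connected oriented cobordism induces a filtered map of controlled degree that carries each canonical generator to a \emph{non-zero} multiple of the corresponding canonical generator modulo lower filtration. Nothing in the present paper supplies this: Theorem \ref{theorem:invarianceofthegeometriccomplex} gives functoriality only up to sign and says nothing about non-vanishing on the Gornik-type classes, and the colouring decomposition of \cite{Mackaayvaz07} used in Proposition \ref{prop:vanishing} is a statement about fixed diagrams, not about elementary cobordism maps. Carrying this out requires the case analysis of births, deaths, zips and unzips against the theta-foam evaluations, including the verification that the relevant scalar (a product of differences of roots raised to Euler-characteristic-dependent powers) is non-zero in $\mathbb{F}$ -- this is where the hypothesis of \emph{distinct} roots enters essentially, and where characteristic issues can arise. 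Until that computation is done, your argument establishes the decomposition of the associated graded and the invariance of the $j_i$ under ambient isotopy, but not their concordance invariance or the slice-genus bound; as written it is a correct outline of the known proof, with its technical heart deferred to \cite{LewarkLobb17}.
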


Lewark and Lobb in \cite{LewarkLobb17} defined two other concordance invariants. The first one is just a rescaled average of the $j_i$'s, that is
\[ s_{\omega}(K) = \frac{j_1(K) + j_2(K) +j_3(K)}{12}\in \frac{1}{4}\mathbb{Z},\]
which is a concordance quasi-homomorphism. The second invariant depends on the choice of a root $x_i$ and is denoted by $\tilde{s}_{\omega,x_{i}}(K)$. This is a slice-torus knot invariant (cf. \cite{LivingstonNaik, Lewark14}), and in particular a concordance homomorphism. Since its definition involves constructions which we do not wish to introduce, we refer the reader to \cite{LewarkLobb17} for it. All these invariants are somehow related, as is stated in the following proposition.
\begin{proposition}[Proposition 2.12, \cite{LewarkLobb17}]
Let $K$ be a knot. Given a potential $\omega\in\mathbb{F}[x]$ with distinct roots $x_1$, $x_2$ and $x_3\in \mathbb{F}$, order the roots in such a way that
\[ \tilde{s}_{\omega,x_1}(K) \leq\tilde{s}_{\omega,x_2}(K) \leq\tilde{s}_{\omega,x_3}(K). \]
Then, the following inequality holds
\[ \vert j_{i} - 6 \tilde{s}_{\omega,x_i}(K)\vert \leq 2\]
for each $i \in \{ 1,2,3\}$.\qed
\end{proposition}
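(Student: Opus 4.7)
The plan is to realize $6\,\tilde{s}_{\omega, x_i}(K)$ as the filtered quantum degree of a canonical Gornik-type generator of $H^\bullet_\omega(K,\mathbb{F})$ attached to the root $x_i$, and then to compare these degrees with the three filtered degrees $j_i(K)$ appearing in Theorem \ref{thm:jis}. Since $\omega$ has three distinct roots, the orthogonal idempotents $e_i = \prod_{j\ne i}(x-x_j)/\prod_{j\ne i}(x_i-x_j) \in \mathbb{F}[x]/(\omega(x))$ give a Chinese-remainder decomposition $\mathbb{F}[x]/(\omega(x)) \simeq \bigoplus_{i=1}^3 \mathbb{F}$, and combined with the Mackaay--Vaz splitting invoked in the proof of Proposition \ref{prop:vanishing} this yields $H_\omega^\bullet(K,\mathbb{F}) \simeq \bigoplus_{i=1}^3 V_i$, where each $V_i$ is a one-dimensional $\mathbb{F}$-vector space with canonical generator $\sigma_{i,K}$ obtained by colouring $K$ with $x_i$. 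A Künneth-type formula for the behaviour of these generators under connected sum (again coming from the idempotent structure) shows that $(\mathrm{qdeg}\,\sigma_{i,K})/6$ is additive under $\#$, hence a concordance homomorphism, and one then identifies it with the Lewark--Lobb slice-torus invariant $\tilde{s}_{\omega, x_i}(K)$, for instance by checking the value on a family of torus knots.

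The key comparison step is to bound the filtered degree of $\sigma_{i,K}$ against the $j_i(K)$ witnessed by the ordered splitting of the associated graded $\mathit{Gr}^* H_\omega^\bullet(K,\mathbb{F})$. The subtlety is that the idempotents $e_i$ are \emph{quadratic} polynomials in $x$, while the filtration on $T(\bigcirc) \simeq \mathbb{F}[x]/(\omega(x))(-2)$ assigns degrees $-2,0,2$ to $1,x,x^2$ respectively (cf. Remark on the grading of $T(\bigcirc)$). Consequently, applying an idempotent $e_i$ to a cycle can shift the filtered quantum degree by at most $2$ in either direction, and symmetrically any cycle witnessing a given $j_k(K)$ splits across the idempotent components in a manner whose top filtered contribution stays within $2$ of $\mathrm{qdeg}\,\sigma_{i,K}$. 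Reading this off in both directions and using that after ordering both sequences they are both ordered lists of three filtered degrees of generators of the same three-dimensional space gives the pairwise bound $\bigl|j_i(K) - 6\,\tilde{s}_{\omega, x_i}(K)\bigr| \le 2$.

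The main obstacle is to establish the correct \emph{indexwise} correspondence: one must show that after arranging both $j_1\le j_2\le j_3$ and $\tilde{s}_{\omega,x_1} \le \tilde{s}_{\omega,x_2}\le \tilde{s}_{\omega,x_3}$, the inequality $\bigl|j_i - 6\tilde{s}_{\omega,x_i}\bigr| \le 2$ holds for each $i$ individually, and not merely in some averaged or unordered sense. This requires precise numerical control over how the quantum filtration on $H_\omega^\bullet(K,\mathbb{F})$ interacts with the Chinese-remainder decomposition; in particular, a spectral-interlacing argument of the kind developed in \cite{LewarkLobb17} is needed to rule out the possibility that large cancellations between terms of different filtered degrees re-order the $\sigma_{i,K}$'s relative to the $j_i$-witnessing basis by more than one slot.
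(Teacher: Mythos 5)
First, note that the paper does not prove this proposition at all: it is imported verbatim from \cite{LewarkLobb17} (Proposition 2.12 there) and stated with a \(\qed\), exactly as Theorem \ref{thm:jis} is. So there is no internal proof to compare against; your proposal is an attempt to reconstruct an external result whose key ingredient, the definition of \(\tilde{s}_{\omega,x_i}\), the paper explicitly declines to give. Judged on its own terms, the proposal has two genuine gaps.

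The first gap is the identification step. You propose to define a candidate invariant as \(\mathrm{qdeg}(\sigma_{i,K})/6\), argue it is a concordance homomorphism via an unproven K\"unneth-type formula, and then ``identify it with \(\tilde{s}_{\omega,x_i}\) by checking the value on a family of torus knots.'' This cannot work as stated: all slice-torus invariants take the same value on (positive) torus knots --- that agreement is essentially part of the normalisation in the definition of a slice-torus invariant (cf.\ \cite{LivingstonNaik, Lewark14}) --- so agreement on torus knots distinguishes nothing and cannot pin down which slice-torus invariant you have constructed. Worse, it is not even clear that \(\mathrm{qdeg}(\sigma_{i,K})/6\) equals \(\tilde{s}_{\omega,x_i}(K)\) on the nose; if it did, the proposition would essentially reduce to comparing two bases of the same filtered space, and the \(\pm 2\) slack in the statement is precisely the symptom that the two quantities are \emph{not} equal in general.

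The second gap is the bound itself. The filtered degrees of the canonical generators \(\sigma_{i,K}\) form \emph{some} basis of the three-dimensional space \(H^{\bullet}_{\omega}(K,\mathbb{F})\), and for an increasing filtration the sorted degrees of an arbitrary basis only \emph{dominate} the sorted jump degrees \(j_1\le j_2\le j_3\) of the associated graded; they can exceed them by an arbitrary amount unless one produces a specific splitting. Your argument that the idempotents \(e_i\) are quadratic in \(x\), hence shift the filtration on \(T(\bigcirc)\simeq \mathbb{F}[x]/(\omega(x))(-2)\) by at most \(2\), is a statement about the local Frobenius algebra of a single circle; it does not control the global quantum filtration on the homology of a knot diagram, where the cycles witnessing the \(j_k\) involve sums over many resolutions. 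You acknowledge this yourself by deferring the indexwise matching to ``a spectral-interlacing argument of the kind developed in \cite{LewarkLobb17}'', which is to say the crucial step is assumed rather than proved. In Lewark--Lobb the inequality comes from a concrete comparison (via exact triangles relating reduced and unreduced theories, each of which moves the quantum degree by a bounded amount), and some version of that mechanism is unavoidable here.
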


To conclude this parenthesis we wish to point out that: $-j_{i}(\overline{K}) = j_{i}(K)$ (\cite[Proposition 2.13]{LewarkLobb17}). It follows that $s_{\omega}(K) = - s_{\omega}(\overline{K})$. 
Now we are ready to prove Proposition \ref{prop:Bennequin-type}.

\begin{proof}[Proof of Proposition \ref{prop:Bennequin-type}]
Directly from the definition of $s_{\omega_{i}}$ follows that Equation \eqref{eq:inequality1} implies Equation \eqref{eq:inequality2}. Furthermore, from \cite[Proposition 2.12]{LewarkLobb17} it is immediate that Equation \eqref{eq:inequality1} implies Equation \eqref{eq:inequality3}.
So it is sufficient to prove \eqref{eq:inequality1}. We borrow the notation from the proof of Proposition \ref{prop:vanishing}.
Denote by $[y]$ an homogeneous element of $ H_{\omega}^{0}(K,\mathbb{F})$ such that
\[ U^{c_{\omega,x_{i}}(B,U)}[y] = [\beta_{\omega,x_{i}}(B)].\]
It is immediate that $\pi_{1}([y])= \pi([\beta_{\omega,Ux_{i}}(B)])$, and the latter is a non-trivial multiple of the homology class of the canonical generator $\Sigma_{\phi_{i}}(\underline{w}_{D})$. Since, the quantum filtration is increasing it follows that
\[ -2sl(B) - 2 c_{\omega,Ux_{i}}(B,U) = qdeg([y]) \geq Fdeg([\Sigma_{\phi_{i}}(\underline{w}_{D})]) =: q_{i}(\overline{K}).\]
It is easy to prove that the maximum filtered degree of the elements of a basis of a vector space equipped with an increasing filtration  does not depend on the chosen basis. Thus, we obtain that
\[-j_{1}(K) = j_{3}(\overline{K}) = \underset{i}{max}\:\{ q_{i}(\overline{K}) \}\leq   -2sl(B) - 2 c_{\omega,Ux_{i}}(B,U).\]
Using the isomorphism induced by the endofunctor of $\mathbf{Foam}$ obtained from the one described in Figure \ref{fig:endofunctor} by replacing $x_1$ with $x_{i} - x_{j}$, it follows that $q_{i} = q_{j}$; and this concludes the proof.
\end{proof}

\begin{corollary}
Let $K$ be a knot and $B$ a braid representing $K$. If either $j_{1}(K) = sl(B)$, $s_{\omega_{1}}(K) = sl(B)$ or $3 \tilde{s}_{\omega,x_{i}}(K) = sl(B)-1$ then $\psi_{3} (B) \neq 0$. In particular, if $B$ is a quasi-positive braid $\psi_{3} (B) \neq 0$.
\end{corollary}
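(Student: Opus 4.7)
Plan: The argument reduces via the proposition preceding Theorem \ref{thm:jis}, which identifies $\psi_{3}(B)\neq 0$ with $c_{\omega,Ux_{i}}(B,U)=0$ through the short exact sequence \eqref{eq:exactsequence1}. So the whole corollary becomes the task of showing that $c:=c_{\omega,Ux_{i}}(B,U)$ vanishes under each of the three hypotheses.

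The third hypothesis is handled directly and cleanly by inequality \eqref{eq:inequality3}: substituting $3\tilde{s}_{\omega_{1},x_{i}}(K)=sl(B)-1$ into $sl(B)+c-1\leq 3\tilde{s}_{\omega_{1},x_{i}}(K)$ produces $c\leq 0$, and combining this with the defining non-negativity $c\geq 0$ forces $c=0$, which by the equivalence above gives $\psi_{3}(B)\neq 0$. The first two hypotheses are dealt with in precisely the same spirit: each asserts that the corresponding Bennequin-type inequality \eqref{eq:inequality1}, \eqref{eq:inequality2} is saturated by the pair $(sl(B),K)$, so substituting the given equality into the inequality and invoking $c\geq 0$ squeezes $c$ to $0$ in each case.

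For the ``in particular'' clause concerning quasi-positive braids, I would invoke Rudolph's theorem that $sl(B)=2g_{4}(K)-1$ for a quasi-positive braid $B$ representing $K$, which saturates the classical Bennequin inequality. Combined with the fact that the slice-torus concordance invariant $\tilde{s}_{\omega,x_{i}}$ attains $g_{4}(K)$ on quasi-positive knots (a standard consequence of the defining property of slice-torus invariants, together with $g=g_{4}$ for quasi-positive $K$), one of the three equality conditions can be verified for the pair $(B,K)$. This places the quasi-positive case inside the scope of the first half of the corollary, giving $\psi_{3}(B)\neq 0$.

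The main obstacle I anticipate is the quasi-positive verification, where one must track normalizations carefully: the numerical values of $j_{i}(K)$, $s_{\omega_{1}}(K)$, and $\tilde{s}_{\omega,x_{i}}(K)$ on a quasi-positive knot $K$ each depend on conventions, and identifying which of the three equalities is achieved (and hence which Bennequin-type inequality is saturated) requires matching these normalizations against Rudolph's formula $sl(B)=2g_{4}(K)-1$.
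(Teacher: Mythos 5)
Your proposal is correct and follows essentially the same route as the paper: the first part reduces, exactly as you say, to combining the equivalence $\psi_{3}(B)\neq 0\Leftrightarrow c_{\omega,Ux_{i}}(B;U)=0$ with the Bennequin-type inequalities of Proposition \ref{prop:Bennequin-type} and the non-negativity of $c_{\omega,Ux_{i}}(B;U)$. For the quasi-positive case the paper does precisely what you anticipate and resolves the normalization question in favour of the third equality: since $3\tilde{s}_{\omega_{1},x_{i}}$ is a slice-torus invariant, its value on the closure of a quasi-positive braid is exactly $sl(B)-1$, which is Rudolph's slice-Bennequin equality in the paper's conventions.
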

\begin{proof}
The only thing to prove is that quasi-positive braids are such that $3 \tilde{s}_{\omega,x_{i}}(K) = sl(B)-1$. This is true because $3\tilde{s}_{\omega,x_{i}}(K)$ is a slice torus invariant, and thus its value on quasi-positive braids is exactly $sl(B)-1$ (see \cite{Lewark14}).
\end{proof}

\bibliography{Bibliography.bib}
\bibliographystyle{plain}
\begin{landscape}
\begin{figure}[]
\begin{turn}{90}
\includegraphics[scale=.35]{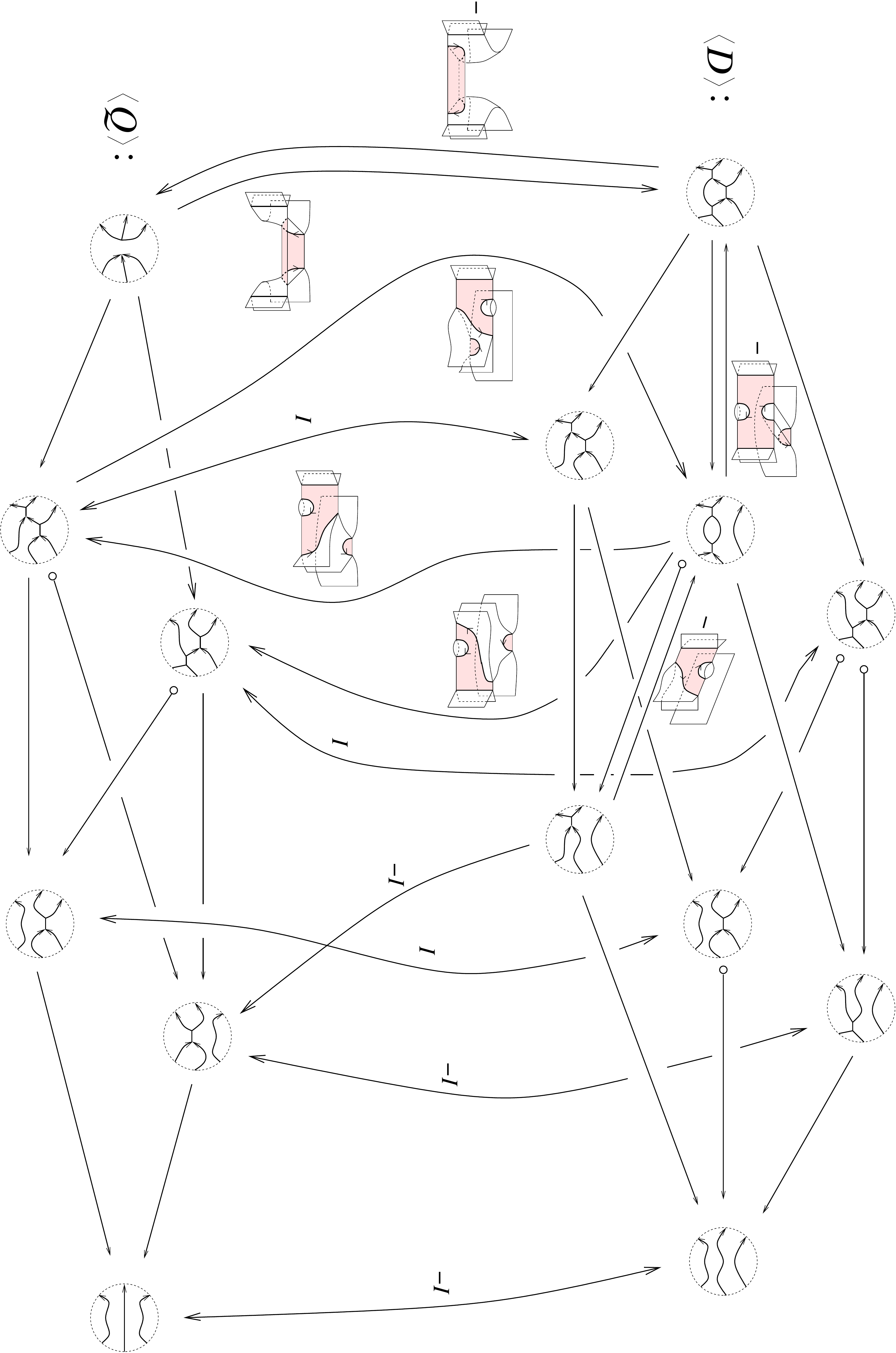} 
\end{turn}
\caption{The maps $G_1$ and $F_1$. The maps $G_2$ and $F_2$ are obtained by reflecting the local figures along the horizontal axes, and the cobordisms accordingly. The red arrows indicate the maps which get a minus sign and the I's indicate the identity we cobordisms.}
\label{fig:invterzasl3a}
\end{figure}
\end{landscape}
\end{document}